\theoremstyle{plain}
\newtheorem{definition}{Definition}
\newtheorem{theorem}[definition]{Theorem}
\newtheorem{notation}[definition]{Notation}
\newtheorem*{theorem*}{Theorem}
\newtheorem{remark}[definition]{Remark}
\newtheorem*{remark*}{Remark}
\newtheorem*{sideremark*}{Side Remark}
\newtheorem*{claim*}{Claim}
\newtheorem*{lemma*}{Lemma}
\newtheorem*{q*}{Question}
\newtheorem{lemma}[definition]{Lemma}
\newtheorem*{corollary*}{Corollary}
\newtheorem{proposition}[definition]{Proposition}
\newcommand{\R}{\mathbb{R}}
\newcommand{\na}{\nabla}
\newcommand{\emb}{\hookrightarrow}
\newcommand{\id}{\text{Id}}
\newcommand{\p}{\partial}
\newcommand{\weak}{\rightharpoonup}
\newcommand{\e}{\varepsilon}
\newcommand{\dd}{{\rm d}}
\newcommand{\linf}{{L^\infty}}
\newcommand{\bra}{\left\langle}
\newcommand{\ket}{\right\rangle}
\newcommand{\D}{{\mathbb{D}(u)}}
\newcommand{\dv}{{\rm div}}
\newcommand{\tor}{{\mathbb{T}^2}}
\newcommand{\trho}{{\tilde{\rho}}}
\newcommand{\divu}{{{\rm div}u}}
\newcommand{\rotu}{{{\rm rot}u}}
\newcommand{\hatrho}{{\widehat{\rho}}}
\newcommand{\rhod}{\left(\rho^\delta\right)}
\newcommand{\rb}{\mathfrak{b}}
\def\XXint#1#2#3{{\setbox0=\hbox{$#1{#2#3}{\int}$ }
		\vcenter{\hbox{$#2#3$ }}\kern-.6\wd0}}
\newcommand{\limphi}{{\bra{\Phi}\ket}}
\newcommand{\rhos}{{\bra \rho^2\ket}}
\title{On global existence and large-time behaviour of weak solutions to the compressible barotropic Navier--Stokes Equations  on $\mathbb{T}^2$ with density-dependent bulk viscosity: beyond the Va\u{\i}gant--Kazhikhov regime}
\author{Siran Li}
\address{Siran Li: School of Mathematical Sciences $\&$ CMA-Shanghai, Shanghai Jiao Tong University, No.~6 Science Buildings,
	800 Dongchuan Road, Minhang District, Shanghai, China (200240)}
\email{\texttt{siran.li@sjtu.edu.cn}}
\author{Jianing Yang}
\address{Jianing Yang: School of Mathematical Sciences, Shanghai Jiao Tong University, No.~6 Science Buildings,
	800 Dongchuan Road, Minhang District, Shanghai, China (200240)}
\email{\texttt{jnyang22@sjtu.edu.cn}}
\keywords{Compressible fluid; Navier--Stokes equations; vacuum; large-time behaviour; global existence; density-dependent viscosity coefficient.}
\subjclass[2020]{}
\date{\today}
\begin{document}

\begin{abstract}
We are concerned with the compressible barotropic Navier--Stokes equations for a $\gamma$-law gas with density-dependent bulk viscosity coefficient $\lambda=\lambda(\rho)=\rho^\beta$ on the two-dimensional periodic domain $\mathbb{T}^2$. The global existence of weak solutions with initial density bounded away from zero and infinity for $\beta>3$, $\gamma>1$ has been established by Va\u{\i}gant--Kazhikhov [\textit{Sib. Math. J.} 36 (1995), 1283--1316]. When $\gamma=\beta>3$, the large-time behaviour of the weak solutions and, in particular, the absence of formation of vacuum and concentration of density as $t \to \infty$, has been proved by Perepelitsa [\textit{SIAM J. Math. Anal.} 39 (2007/08), 1344--1365]. Huang--Li [\textit{J. Math. Pures Appl.} 106 (2016), 123--154] extended these results by establishing the global existence of weak solutions and large-time behaviour under the assumptions  $\beta >3/2$, $1< \gamma<4\beta-3$, and that the initial density stays away from infinity (but may contain vacuum).

Improving upon the works listed above, we prove that in the regime of parameters as in Huang--Li, namely that $\beta >3/2$ and $1< \gamma<4\beta-3$, if the density has no vacuum or concentration at $t=0$, then it stays away from zero and infinity at all later time $t \in ]0,\infty[$. Moreover, under the mere assumption that $\beta>1$ and $\gamma>1$, we establish the global existence of weak solutions, thus pushing the global existence theory of the barotropic Navier--Stokes equations on $\mathbb{T}^2$ to the most general setting to date. One of the key ingredients of our proof is a novel application --- motivated by the recent work due to Danchin--Mucha [\textit{Comm. Pure Appl. Math.} 76 (2023), 3437--3492]  --- of Desjardins' logarithmic interpolation inequality.

	\end{abstract}
	\maketitle

	\section{Introduction}\label{sec: intro}

We are concerned with the global-in-time existence and large-time behaviour of solutions for viscous compressible fluids in the barotropic regime. The partial differential equations (PDE) describing the motion for  barotropic viscous compressible fluids are the Navier--Stokes system:
\begin{equation}\label{PDE, 1}
    \begin{cases}
        \rho_t + {\rm div} (\rho u)=0,\\
        	(\rho u)_t+\mathrm{div}(\rho u\otimes u)={\rm div}\sigma.
    \end{cases}
\end{equation}
In the above, $\rho=\rho(t,x)$ and $u=u(t,x)$ are the density and fluid velocity, respectively; $\sigma$ is the stress tensor of the fluid obeying the Stokes' law:
\begin{equation*}
    \sigma = \mathcal{S} - P\id.
\end{equation*}
The pressure $P=P(\rho)$ is a scalar function depending only on the density in the barotropic regime, and the viscous stress tensor $\mathcal{S}$ measures the resistance of the fluid to flow. Changes in temperature and external forces are not taken into account.

The mathematical theory of multidimensional barotropic viscous compressible flows has been an active field of research in the past three decades. Foundational contributions have been made, among many others, by P.-L. Lions \cite{lions} (1993--1998) and E. Feireisl \emph{et al} \cite{fnp} (2001). In the Lions--Feireisl theory, the stress tensor takes the specific form:
\begin{align*}
    \sigma = 2\mu \,\D + \lambda\, \dv u \,\id - P(\rho)\id.
\end{align*}
Here $\D := \frac{1}{2}\left(\na u + \na^\top u\right)$ is the rate-of-strain tensor; $\lambda$ and $\mu$ are the bulk and shear viscosity coefficients, respectively, which satisfy
\begin{align*}
    \mu > 0 \qquad
     \text{and}\qquad \lambda + \frac{2}{d}\mu>0,
\end{align*}
where $d$ is the dimension of the spatial domain.  Throughout this work, we consider Equation~\eqref{PDE, 1} on $\mathbb{T}^2=\mathbb{R}^2/\mathbb{Z}^2$ (namely, $d=2$) and, in line with the seminal paper \cite{vaigant} by Va\u{\i}gant--Kazhikhov (1995), we work under the additional assumptions:
\begin{equation}\label{assumption}
0<\mu = \mu(\rho)={\rm constant}, \quad \lambda(\rho) = \rho^\beta,\quad P(\rho) = \rho^\gamma.
\end{equation}
The fluid in consideration is a $\gamma$-law barotropic gas with bulk viscosity depending on the density. The constants in front of $\rho^\beta$ in $\lambda$ and $\rho^\gamma$ in $P$ are normalised to $1$. In this case, the initial value problem for Equation~\eqref{PDE, 1} becomes 	\begin{equation}\label{equ}
		\begin{cases}
			\rho_t+\mathrm{div}(\rho u) = 0,\\
			(\rho u)_t+\mathrm{div}(\rho u\otimes u)= \mathcal{D}-\nabla P\qquad \text{in } [0,T] \times \tor,\\
   u(0,x)=u_0(x),\quad \rho(0,x)=\rho_0(x)\qquad\text{for } x\in \mathbb{T}^2,
		\end{cases}
	\end{equation} 
with the isotropic stress tensor $\mathcal{D}$ given by
\begin{align*}
\mathcal{D}:=\nabla\big((\lambda+\mu)\mathrm{div} \,u\big)+\mu\Delta u.
\end{align*}

It should be highlighted that in \cite{vaigant} by Va\u{\i}gant--Kazhikhov (1995), the following range of  coefficients is assumed: 
\begin{align*}
    \beta > 3 \qquad \text{ and }\qquad \gamma>1.
\end{align*}

The study of Equation~\eqref{equ} abounds in the literature. A brief survey is given below, though the list of references is by no means exhaustive. Note first that the barotropic Navier--Stokes in one spatial dimension (1D) is a classical topic whose developments predate the Lions--Feireisl theory. See Kazhikhov--Shelukhin \cite {kazhi}, D. Serre \cite{serre1, serre2}, Hoff \cite{hoff-1d, hoff1}, and Beir\~{a}o da Veiga \cite{bdv}, etc. The case of density-dependent viscosity coefficient in 1D has been treated by Mellet--Vasseur \cite{mv}.

In the multidimensional case, the well-posedness theory of Equation~\eqref{equ} has been studied extensively in the case that both the shear and bulk viscosities are positive constants. The classical papers by Serrin \cite{serrin} and Nash \cite{nash} established the local existence and uniqueness of classical solutions when the initial density $\rho_0$ is away from the vacuum. This has been extended to strong solutions and to the case that the initial density is allowed to vanish on nontrivial open sets. See, \emph{e.g.}, Solonnikov \cite{sol}, Salvi--Stra\v{s}kraba \cite{ss}, Cho--Choe--Kim \cite{cck}, and Choe--Kim \cite{ck}. The global existence of  solutions was first obtained by Matsumura--Nishida \cite{mn} for classical solutions with initial data close to a non-vacuum equilibrium, and then by Hoff \cite{hoff2} for weak solutions (or strong solutions away from the ``initial layer'') with discontinuous initial data. The global existence of weak solutions with large data, in contrast, is the theme of the celebrated Lions--Feireisl theory of renormalised solutions as aforementioned \cite{lions, fnp, feireisl}. For the global existence of classical and strong solutions with smooth data of small energy and possibly vanishing initial density, see Huang--Li \cite{H2}, Huang--Li--Xin \cite{xin1}, Jiu--Wang--Xin \cite{xin2}, Li--Liang \cite{ll} and the references cited therein. All the results mentioned in this paragraph also hold on the whole space $\R^d$ ($d\ \in \{2,3\}$) under suitable far-field decay conditions.

On the other hand, the case of density-dependent coefficients of the isotropic stress tensor $\mathcal{D}$  poses enormous challenges to the analysis of Equation~\eqref{equ}. The only successful insights obtained in this case before 2018 or so were reported in Bresch--Desjardins \cite{bd, bd'}, Bresch--Desjardins--G\'{e}rard-Valet \cite{bdg}, Bresch--Desjardins--Zatorska \cite{bdz}, Li--Xin \cite{lx}, and Vasseur--Yu \cite{vy}. These works typically assume the following condition on  the viscosity coefficients:
\begin{align}\label{BD relation}
    \lambda(\rho) = 2\rho \mu'(\rho) -2\mu(\rho),
\end{align}
under which the BD-estimate (\emph{\`{a} la} Bresch--Desjardins \cite{bd'}) is valid. 
See also the review paper \cite{bd-handbook} for a detailed account. In the important work \cite{bj}, Bresch--Jabin (2018) established, by way of exploring novel compactness estimates for the continuity equation, the global existence of weak solutions for the barotropic Navier--Stokes system in multidimensions with general, thermodynamically unstable pressure laws and the anisotropic viscous stress tensor $\mathcal{D}$.

Our current paper is closely related to the now-classical work \cite{vaigant} by Va\u{\i}gant--Kazhikhov (1995), which establishes the global existence of classical, strong, and weak solutions for Equation~\eqref{equ} on $\tor$ under the assumption~\eqref{assumption} --- summarised as Theorem~\ref{thm: VK} below. Note that the viscosity coefficients as in assumption~\eqref{assumption} do \emph{not} satisfy the condition~\eqref{BD relation} for  BD-estimates.

\begin{theorem}[Va\u{\i}gant--Kazhikhov \cite{vaigant}]
\label{thm: VK}
Assume the condition~\eqref{assumption} and $\beta>3$, $\gamma>1$.\footnote{In fact, here it suffices to assume $\gamma \geq 0$ in place of $\gamma>1$.}  Then
\begin{enumerate}
    \item 
    Let $\left(\rho_0, u_0\right)$ be such that
		$$0<m<\rho_0(x)<M<+\infty,\quad x\in\mathbb{T}^2,$$
		$$(\rho_0,u_0)\in C^{1+\omega}\left(\mathbb{T}^2\right)\times C^{2+\omega}\left(\mathbb{T}^2;\R^2\right),\quad 0<\omega<1$$ for some $m,M$. Then there exists a unique global classical solution for Equation~\eqref{equ} such that
		$$\rho \in C^{1+\omega/2,2+\omega}\left([0,\infty[\times \mathbb{T}^2\right),$$
		$$u\in C^{1+\omega/2,2+\omega}\left([0,\infty[\times \mathbb{T}^2; \R^2\right).$$
The density is bounded away from vacuum for all finite time. 
\item 
Let  $\left(\rho_0, u_0\right)$ be such that
		$$0<m<\rho_0(x)<M<+\infty,\quad x\in\mathbb{T}^2,$$
		$$(\rho_0,u_0)\in W^{1,q}\left(\mathbb{T}^2\right) \times H^2\left(\mathbb{T}^2;\R^2\right),\quad q>2$$
		for some $m, M$. Then there exists a global strong solution for Equation~\eqref{equ}. The density $\rho$ is an $L^{\infty}$-function essentially bounded away from vacuum for all finite time.
  \item 
Let  $\left(\rho_0, u_0\right)$ be such that 
\begin{align*}
    \left(\rho_0, u_0\right) \in L^\infty(\tor) \times H^{1}\left(\tor;\R^2\right).
\end{align*}
Then there exists a global weak solution for Equation~\eqref{equ}.
  
\end{enumerate}
	\end{theorem}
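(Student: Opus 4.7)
The plan is to prove part (1) first via the classical scheme of local well-posedness combined with global a priori estimates; parts (2) and (3) then follow by approximating the less regular data by smooth data satisfying the hypotheses of (1) and passing to the limit using the same estimates, supplemented by a Lions--Feireisl-type compactness argument for the density. For the local existence step in (1), since $\rho_0$ is bounded away from zero and the data are H\"older continuous, the system is a quasilinear parabolic perturbation of a linear transport equation, so a standard Banach fixed-point argument in weighted H\"older classes produces a unique classical solution on some interval $[0,T_*[$ preserving the positivity of the density.

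The heart of the matter is to bound $\rho$ away from $0$ and $\infty$ and to control the higher-order norms of $u$ uniformly on any $[0,T] \subset [0,T_*[$. I would proceed in four steps. First, testing the momentum equation against $u$ and combining with the continuity equation gives the basic energy identity
\begin{equation*}
\sup_{t \in [0,T]} \int_{\tor} \left(\tfrac12 \rho|u|^2 + \tfrac{1}{\gamma-1}\rho^\gamma\right) \dd x + \int_0^T\!\!\!\int_{\tor} \left(\mu|\na u|^2 + (\lambda(\rho)+\mu)(\dv u)^2\right) \dd x\, \dd t \leq E_0.
\end{equation*}
Second---and most importantly---one derives the $\linf$-bound on $\rho$ by introducing the effective viscous flux $G := (\lambda(\rho)+2\mu)\dv u - P(\rho)$. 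From the momentum equation, $G$ satisfies an elliptic equation of the form $\Delta G = \dv\bigl((\rho u)_t + \dv(\rho u \otimes u)\bigr)$ modulo a curl-free contribution, while the continuity equation rewritten as $\p_t \log(1+\rho) + u \cdot \na \log(1+\rho) = -\rho(G + \rho^\gamma)/\bigl((1+\rho)(\rho^\beta + 2\mu)\bigr)$ controls the growth of $\log(1+\rho)$ along Lagrangian trajectories. Combining these with $L^p$-elliptic regularity and a Gr\"onwall argument---which closes precisely because $\beta>3$ ensures the viscous damping from $\rho^\beta$ dominates the pressure term along characteristics---yields $\|\rho(t,\cdot)\|_{\linf} \leq \overline{M}$ uniformly in $t$. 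Third, once $\rho$ is bounded above, the identity $\dv u = (G + \rho^\gamma)/(\rho^\beta + 2\mu)$ gives $\dv u \in L^1_t L^\infty_x$, and integrating $(\log \rho)_t + u \cdot \na \log \rho = -\dv u$ along characteristics produces the uniform lower bound $\rho(t,x) \geq m \exp\bigl(-\|\dv u\|_{L^1_t L^\infty_x}\bigr) > 0$. Fourth, with two-sided bounds on $\rho$, the momentum equation is a uniformly parabolic system for $u$ with bounded coefficients; standard Schauder and parabolic $L^p$ bootstrapping yield $u \in C^{1+\omega/2, 2+\omega}$ and, via the transport equation, $\rho \in C^{1+\omega/2, 2+\omega}$.

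Cases (2) and (3) follow by regularising the initial data, applying (1), and extracting a limit: the estimates above depend on the data only through the energy $E_0$, the bounds $0<m\leq \rho_0 \leq M$, and an appropriate norm of $\na u_0$, all of which survive in the limit. The strong convergence of $\rho$ in $L^1$ is the subtle point for (3) and is obtained via the renormalised continuity equation together with weak continuity of the effective viscous flux. I expect the principal obstacle to be the $\linf$-bound on the density in the second step: the energy estimate alone controls $\rho$ only in $\linf_t L^\gamma_x$, and the transport-plus-elliptic argument for $G$ is a delicate balance between the growth of $\rho^\gamma$ along trajectories and the damping supplied by $\rho^\beta\,\dv u$. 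It is precisely this balance that forces the restriction $\beta>3$ in \cite{vaigant} and that the present paper aims to relax using the Danchin--Mucha logarithmic interpolation.
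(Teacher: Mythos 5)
This theorem is quoted from Va\u{\i}gant--Kazhikhov \cite{vaigant} and the paper gives no proof of it; the relevant machinery does, however, reappear in \S\ref{sec: log Y}--\S\ref{sec: global existence}, where the authors rederive and sharpen the key estimates, so the comparison below is against that.

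Your outline identifies the right skeleton (local well-posedness, energy identity, effective viscous flux, transport of $2\mu\log\rho+\beta^{-1}\rho^\beta$, Schauder bootstrap, approximation plus renormalised-solution compactness for parts (2)--(3)), but the central step --- the uniform upper bound on $\rho$ --- is asserted rather than proved, and the mechanism you give for it is not the correct one. Writing the continuity equation as a damped transport equation for $\log(1+\rho)$ with source $-(G+\rho^\gamma)/(\rho^\beta+2\mu)$ and invoking ``$L^p$-elliptic regularity plus Gr\"onwall'' does not close: the elliptic equation for the flux only controls $\na G$ in terms of $(\rho u)_t+\dv(\rho u\otimes u)$, which is not \emph{a priori} in any usable space. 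The actual argument (both in \cite{vaigant} and in \S\ref{sec: log Y}--\S\ref{sec: rho bound} here) is to take the material derivative of $F=2\mu\log\rho+\beta^{-1}\rho^\beta-(-\Delta)^{-1}\dv(\rho u)$, which converts the problematic convection term into the Riesz commutator $G=[u_i,R_iR_j](\rho u_j)$ of \eqref{G, def}, and then to bound $\|G\|_{\linf}$ via compensated compactness, the Gagliardo--Nirenberg and Brezis--Wainger inequalities, and the auxiliary quantities $D$, $X$, $Y$ of \eqref{D, def}--\eqref{X, def}. The restriction $\beta>3$ does not come from ``viscous damping dominating the pressure along characteristics'' (that heuristic would suggest a condition like $\beta>\gamma$); it comes from requiring that the exponents of $\hatrho$ appearing in the $\linf$-bound for $G$ and in $\left\|(-\Delta)^{-1}\dv(\rho u)\right\|_{\linf}$ (the analogues of $\alpha_1,\dots,\alpha_4$ in Propositions~\ref{propn: logY} and \ref{propn: boundG}) be strictly less than $\beta$, so that the Gr\"onwall step \eqref{rho1}--\eqref{rho2} absorbs them. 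Without the commutator identity \eqref{ui} and a quantitative $\linf$-estimate for $G$, your Step~2 has no route to the claimed bound, and consequently Steps~3 and 4 (the lower bound via $\dv u\in L^1_tL^\infty_x$, which itself needs $\|G\|_{\linf}\in L^1_t$, and the Schauder bootstrap) are not yet supported. The remaining parts of your plan (approximation for (2)--(3), strong $L^1$-compactness of $\rho$ via renormalisation and weak continuity of the effective viscous flux) are consistent with \S\ref{sec: final}, but they all sit downstream of the missing estimate.
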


Throughout we adhere to the following convention in \cite[p.1109]{vaigant}:
\begin{itemize}
    \item 
    $(\rho, u)$ is a weak solution to Equation~\eqref{equ} if it satisfies the PDE in the distributional sense;
    \item 
     $(\rho, u)$ is a strong solution if all of its derivatives are regular distributions and Equation~\eqref{equ} also holds in the \emph{a.e.} sense; 
     \item 
$(\rho, u)$ is a classical solution if  all the terms in Equation~\eqref{equ} are H\"{o}lder continuous.
\end{itemize}

With Theorem~\ref{thm: VK} established, one naturally asks about the large-time behaviour of solutions for Equation~\eqref{equ} in the Va\u{\i}gant--Kazhikhov regime. The following result was obtained by Perepelitsa \cite{perep} (2006). Here and hereafter, all the norms are understood as taken over $\tor$ unless otherwise specified; \emph{e.g.}, $\|f\|_{L^q}=\|f\|_{L^q(\mathbb{T}^2)}$, $\|f\|_{W^{k,q}}=\|f\|_{W^{k,q}(\mathbb{T}^2)}$, and $\|f\|_{H^k}=\|f\|_{W^{k,2}(\mathbb{T}^2)}$. The time variable $t$ is usually suppressed.

\begin{theorem}[Perepelitsa \cite{perep}]\label{thm: perepelitsa}
Under the assumption~\eqref{assumption} and that $\beta > 3, \gamma>1$, suppose that
\begin{eqnarray*}
    && \left(\rho_0, u_0\right) \in L^\infty(\tor) \times H^{1}\left(\tor;\R^2\right),\\
    && 0<m < \mathop{\rm ess\,inf}_{\tor} \rho_0 \leq \mathop{\rm ess\,sup}_{\tor} \rho_0 < M < +\infty
\end{eqnarray*}
for some $m,M$. Then there exists a global weak solution for Equation~\eqref{equ} such that
\begin{align*}
    (\rho, u) \in C\left([0,\infty[; L^2(\tor)\right) \times C\left([0,\infty[; L^2_w\left(\tor;\R^2\right)\right),
\end{align*}
and that for any $T>0$ there are $0<\bar{m}(T)<\bar{M}(T)<\infty$ such that 
\begin{align*}
    \bar{m}(T) \leq \rho(t,x) \leq \bar{M}(T) \qquad \text{for each } t \in ]0,T[ \text{ and a.e. } x \in \tor.
\end{align*}

Moreover, if $\beta=\gamma>3$, then there exists a weak solution such that 
\begin{align*}
    \lim_{t \to \infty} \left\{ \left\| \rho(t,\bullet) - \int_\tor \rho_0(y)\,\dd y\right\|_{\linf} + \|u(t,\bullet)\|_{L^p}\right\} = 0\qquad \text{for any } p>1.
\end{align*}
\end{theorem}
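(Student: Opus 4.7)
The plan is to construct the weak solution by an approximation scheme based on Theorem~\ref{thm: VK}, and then exploit the structural matching $\beta = \gamma$ to upgrade finite-time bounds into long-time decay.

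First I would mollify $(\rho_0, u_0)$ to meet the hypotheses of Theorem~\ref{thm: VK}(1), producing classical solutions $(\rho^\delta, u^\delta)$. The crucial tool for $\delta$-uniform bounds is the effective viscous flux $F := (2\mu + \rho^\beta)\,\mathrm{div}\,u - (\rho^\gamma - \overline{P})$, whose Laplacian equals $\mathrm{div}(\rho \dot u)$, together with the Lagrangian form of the continuity equation
\begin{equation*}
\frac{D\rho}{Dt} = -\frac{\rho\bigl(F + \rho^\gamma - \overline{P}\bigr)}{2\mu + \rho^\beta}.
\end{equation*}
Combining this ODE with the 2D logarithmic Sobolev control of $F$ available for $\beta > 3$ gives $\|\rho\|_{L^\infty((0,T)\times\tor)} \leq \bar M(T)$; the lower bound $\rho \geq \bar m(T) > 0$ then follows since the right-hand side is bounded by a multiple of $\rho$, precluding finite-time extinction. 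Strong $L^p$-compactness of $\rho^\delta$ and weak $L^2(H^1)$-compactness of $u^\delta$ yield the weak solution, with time-continuity provided by DiPerna--Lions renormalisation.

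For the large-time statement with $\beta = \gamma > 3$, the Lagrangian ODE simplifies to
\begin{equation*}
\frac{D\rho}{Dt} = -\frac{\rho(\rho^\gamma - \overline{P})}{2\mu + \rho^\gamma} - \frac{\rho F}{2\mu + \rho^\gamma},
\end{equation*}
in which the first term is strictly contractive toward $\{\rho^\gamma = \overline{P}\}$, while the second acts as forcing whose $L^2(0,\infty; L^2)$-norm is controlled by the global dissipation $\int_0^\infty \|\nabla u\|_{L^2}^2\,dt < \infty$ through the elliptic identity for $F$. I would then introduce a Lyapunov functional such as $\int(\rho - \bar\rho)^2\,dx$ (or a relative entropy adapted to the pressure), prove it has time-integrable dissipation rate, and combine with equicontinuity-in-time of $\rho$ to conclude $L^2$-decay to $\bar\rho$; this is upgraded to $L^\infty$-convergence using the elliptic regularity for $F$ and the uniform density bounds. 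Once $\overline{P}(t) \to \bar\rho^\gamma$, the momentum equation is effectively parabolic with bounded coefficients, so interpolation between $\|u\|_{L^2}$ (from energy) and the time-integrable $\|\nabla u\|_{L^2}$ gives $\|u(t)\|_{L^p} \to 0$ for every $p > 1$.

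The principal obstacle is the uniform-in-time lower bound on $\rho$: \emph{a priori}, $\bar m(T)$ may degenerate as $T \to \infty$, and only the precise cancellation afforded by $\beta = \gamma$ prevents this. Making rigorous the dominance of the contractive first term over the $L^2$-in-time forcing $F$ --- particularly along trajectories where $\rho$ is small and the restoring effect weakens --- will be the technically delicate point, and is likely where Desjardins-type logarithmic interpolation (as foreshadowed in the abstract) must enter.
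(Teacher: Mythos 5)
This theorem is quoted from Perepelitsa \cite{perep}; the present paper does not reprove it, so your proposal can only be measured against the strategy the paper attributes to \cite{perep} (commutator estimates plus Poincar\'{e}--Sobolev inequalities in Orlicz spaces) and against the closely related arguments of Sections~\ref{sec: log Y}--\ref{sec: rho bound}. Your skeleton --- mollify the data, invoke Theorem~\ref{thm: VK}, control the density through $\dot\rho=-\rho\,\mathrm{div}\,u$ with $\mathrm{div}\,u$ expressed via the effective viscous flux, and use a Lyapunov functional for the large-time statement --- is the right one, but the three hardest steps are asserted rather than proved, and as stated two of them would fail.

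First, the upper bound $\rho\le\bar M(T)$ does not follow from ``logarithmic Sobolev control of $F$'' alone. The quantity actually transported is $\theta(\rho)=2\mu\log\rho+\rho^{\beta}/\beta$ shifted by $(-\Delta)^{-1}\mathrm{div}(\rho u)$ (Equation~\eqref{F}), and the source term is the commutator $G=[u_i,R_iR_j](\rho u_j)$; bounding $\|G\|_{L^{\infty}}$ (or a suitable norm of it) is the core of the proof and requires the entire chain Lemma~\ref{lem: Lp for nabla u} $\to$ Proposition~\ref{propn: logY} $\to$ Proposition~\ref{propn: boundG}, or Perepelitsa's Orlicz-space analogue --- none of which your outline supplies. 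Second, your lower-bound argument (``the right-hand side is bounded by a multiple of $\rho$, precluding finite-time extinction'') is not valid: it would require $\mathrm{div}\,u\in L^1(0,T;L^{\infty})$, which is precisely what one does not have; the flux is controlled only in energy norms, and the correct route is again through $\theta(\rho)-(-\Delta)^{-1}\mathrm{div}(\rho u)$ and the $L^{\infty}$ bound on $G$ (cf.\ \S\ref{lower bound}). Third, for $\beta=\gamma$, $L^2$-decay of $P-\bar P$ combined with uniform $L^{\infty}$ bounds yields $L^p$-convergence of $\rho$ for every finite $p$ but never $L^{\infty}$-convergence, since $\rho$ carries no spatial regularity; the $L^{\infty}$ statement is exactly the delicate point of \cite{perep} and cannot be obtained from ``elliptic regularity for $F$.'' Finally, the strong $L^p$-compactness of $\rho^{\delta}$ that you take for granted is itself the content of a Lions/Yudovich-type argument (compare \S\ref{sec: final}), not a consequence of the uniform bounds alone.
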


Theorem~\ref{thm: perepelitsa} ascertains that, for initial density bounded away from $0$ and $+\infty$, the global weak solution (whose existence is guaranteed by Theorem~\ref{thm: VK}, (3)) develops neither vacuum nor concentration up to any finite time. In general, however, it does not rule out the formation of vacuum or concentration at $T=+\infty$. The large-time behaviour is only established when $\beta=\gamma>3$. The proof in \cite{perep} utilises, among other more ``standard'' tools, commutator estimates and Poincar\'{e}--Sobolev inequalities in Orlicz spaces.

Based on developments in Jiu--Wang--Xin \cite{xin2} on removing the condition ${\rm ess\,inf}_{\tor} \rho_0 >m >0$ in Theorem~\ref{thm: perepelitsa}, Huang--Li \cite{H1}  (2016) established Theorem~\ref{thm: Huang-Li} below (among other results), which relaxes the condition for the large-time behaviour from $\gamma=\beta>3$ in 
Theorem~\ref{thm: perepelitsa} to $ \beta>3/2$ and $1<\gamma < 4\beta-3$. In comparison to Theorem~\ref{thm: main}, the result in \cite{H1} allows for the existence of vacuum at any time. More precisely:
\begin{theorem}[Theorem~1.2 in \cite{H1}]\label{thm: Huang-Li}
Suppose that $\beta>4/3$, $\gamma>1$ and that
$$\left(\rho_0, u_0\right) \in L^\infty\left(\tor\right) \times H^{1}\left(\tor;\R^2\right) \qquad\text{and}\qquad \rho_0 \geq 0.$$
Then Equation~\eqref{equ} has a global weak solution $(\rho, u)$ in $]0,\infty[ \times \tor$ such that for any $0<\tau < T< \infty$ and $p \geq 1$, one has
\begin{eqnarray*}
    && \rho \in L^\infty\left(0,T; \linf\left(\tor\right)\right) \cap C\left(0,T; L^p\left(\tor\right)\right),\\
    && u \in L^\infty\left(0,T; H^1\left(\tor;\R^2\right)\right), \quad u_t \in L^2\left(\tau, T; L^2\left(\tor;\R^2\right)\right), \quad \na u \in \linf\left(\tau,T; L^p\left(\R^{2\times 2}\right)\right). 
\end{eqnarray*}
If, in addition, $\beta>3/2$ and $1<\gamma < 4\beta-3$,  then  it holds that
\begin{align*}
    \sup_{t \in [0,\infty[} \|\rho(t,\bullet)\|_\linf \leq C\left(\mu,\beta,\gamma,\|\rho_0\|_\linf,\|u_0\|_{H^1}\right)
\end{align*}
and that
\begin{align*}
    \lim_{t \to \infty} \left\{ \left\| \rho(t,\bullet) - \int_\tor \rho_0(y)\,\dd y\right\|_{L^p} + \|\na u(t,\bullet)\|_{L^p}\right\} = 0\qquad \text{for any } p \geq 1.
\end{align*}
\end{theorem}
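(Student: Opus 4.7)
The plan is a three-tier strategy---approximate, estimate uniformly, pass to the limit---mirroring the architecture of Va\u{\i}gant--Kazhikhov but crucially circumventing the requirement that the initial density be bounded away from zero. I would first regularise by setting $\rho_0^\delta := \rho_0 + \delta$ with $\delta \in {]0,1[}$ and, if necessary, mollify $u_0$, so as to enter the class of initial data to which Theorem~\ref{thm: VK}(3) applies, yielding a weak solution $(\rho^\delta, u^\delta)$. For the range $\beta \in {]4/3, 3]}$ outside Va\u{\i}gant--Kazhikhov's scope, one should instead invoke the strong solution theory of Jiu--Wang--Xin \cite{xin2} cited in the excerpt. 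The goal is then to establish $\delta$-independent estimates and pass $\delta \to 0^+$.

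The core a priori estimates proceed as follows. Starting from the basic energy identity, which gives $\sqrt{\rho}\,u \in L^\infty(0,T;L^2)$ and $\nabla u \in L^2(0,T;L^2)$, I would introduce the \emph{effective viscous flux}
\begin{align*}
G := (2\mu + \rho^\beta)\,\dv u - \bigl(\rho^\gamma - \overline{\rho^\gamma}\bigr),
\end{align*}
where $\overline{\,\cdot\,}$ denotes the spatial mean over $\tor$. The momentum equation yields $\Delta G = \dv(\rho \dot u)$ on $\tor$, so that $G$ enjoys better integrability than $\nabla u$ alone. Next, rewrite the continuity equation as a transport equation for $\rho^\beta$ (or $\log \rho$), and substitute $\dv u = (G + \rho^\gamma - \overline{\rho^\gamma})/(2\mu + \rho^\beta)$. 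An $L^p$-bootstrap on $\rho$ along a geometric sequence $p_k \to \infty$ can then be closed provided the damping from $\rho^\beta$ outweighs the pressure forcing $\rho^\gamma$; careful bookkeeping singles out precisely the numerology $\beta > 3/2$ and $1<\gamma < 4\beta - 3$, under which $\|\rho^\delta(t,\bullet)\|_{\linf} \leq C(\mu,\beta,\gamma,\|\rho_0\|_\linf,\|u_0\|_{H^1})$ holds \emph{uniformly in $t \in [0,\infty[$}. The higher regularity ($u_t \in L^2(\tau,T;L^2)$ and $\nabla u \in L^\infty(\tau,T; L^p)$) is then obtained by differentiating the momentum equation in time and testing by $u_t$ on $[\tau, T]$.

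Passing to the limit $\delta \to 0^+$ uses the $\delta$-uniform $L^\infty$ bound on $\rho^\delta$ together with time-equicontinuity from the continuity equation to extract $\rho^\delta \to \rho$ strongly in $C(0,T; L^p)$, and weak-$\ast$ convergence of $u^\delta$ in the relevant topologies. The nonlinear terms $(\rho^\delta)^\gamma$ and $(\rho^\delta)^\beta \dv u^\delta$ are handled \emph{\`{a} la} Lions--Feireisl via the weak compactness of the effective flux $G^\delta$. For the large-time asymptotics, combine: (i) the $L^\infty$ bound on $\rho$; (ii) the global-in-time integrability $\int_0^\infty \|\nabla u\|_{L^2}^2 \,\dd t < \infty$; and (iii) a weighted-in-time estimate of the form $\int_0^\infty t\|\sqrt{\rho}\,u_t\|_{L^2}^2\,\dd t < \infty$, obtained by multiplying the energy balance for $\rho|u|^2$ by $t$ and integrating by parts. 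Gagliardo--Nirenberg interpolation plus the elliptic gain in $G$ then upgrade to $\|\nabla u(t,\bullet)\|_{L^p}\to 0$, while $\|\rho(t,\bullet) - \overline{\rho_0}\|_{L^p}\to 0$ follows from mass conservation and from $\nabla \rho^\beta$ being controlled by $G$.

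The principal obstacle is the uniform-in-time $L^\infty$ estimate on $\rho$. The classical Va\u{\i}gant--Kazhikhov proof (for $\beta>3$) turns on a $\log\log \rho$ iteration that does \emph{not} close for $\beta \in {]3/2, 3]}$. One must design a more delicate two-parameter bootstrap that exactly exploits the scaling relation $\gamma < 4\beta - 3$: the exponent $4\beta - 3$ arises from matching the index of a Sobolev embedding into the scaling dictated by the continuity equation. Arranging the iteration to close at this sharp threshold---\emph{uniformly in $t$}, so as to yield the large-time decay rather than merely finite-time bounds---is where the bulk of the technical work lies.
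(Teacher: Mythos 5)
First, a point of order: Theorem~\ref{thm: Huang-Li} is quoted verbatim from Huang--Li \cite{H1} and is \emph{not} proved in this paper --- it appears in the introduction purely as background, so there is no in-paper proof to compare your proposal against. The closest the paper comes is its own Main Theorem~\ref{thm: main} and Sections~\ref{sec: log Y}--\ref{sec: global existence}, which treat the easier non-vacuum case ($\rho_0\geq m>0$) and therefore sidestep the hardest part of the Huang--Li statement. Your three-tier skeleton (regularise $\rho_0^\delta=\rho_0+\delta$, derive $\delta$-uniform estimates, pass to the limit with Lions--Feireisl compactness of the effective viscous flux) is the standard and correct architecture for this kind of result. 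Beware also a notational clash: what you call $G$ is the paper's effective viscous flux $B$ in \eqref{defB}; in this paper $G$ is reserved for the commutator $[u_i,R_iR_j](\rho u_j)$.

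The genuine gap is exactly the step you flag as ``the principal obstacle'': the time-uniform $L^\infty$ bound on $\rho$ under $\beta>3/2$, $1<\gamma<4\beta-3$. Your proposed mechanism --- an $L^p$ bootstrap on $\rho$ along a geometric sequence $p_k\to\infty$, with the threshold $4\beta-3$ ``arising from matching the index of a Sobolev embedding'' --- is not the argument used in \cite{H1} (or in \cite{perep,vaigant}, or in \S\ref{sec: rho bound} of this paper), and you give no indication of how such an iteration would close with constants uniform in both $p_k$ and $t$. The actual mechanism is pointwise in space: one writes the momentum equation as a transport inequality for $F=2\mu\log\rho+\beta^{-1}\rho^\beta-(-\Delta)^{-1}\mathrm{div}(\rho u)$ (cf.\ \eqref{eq1}, \eqref{dF}), applies a maximum-principle/Gr\"{o}nwall argument to $\hatrho^{\,\beta}$, and must therefore control in $L^\infty$ both the commutator $[u_i,R_iR_j](\rho u_j)$ (via Coifman--Rochberg--Weiss/Coifman--Meyer estimates, Lemma~\ref{commu}) and $(-\Delta)^{-1}\mathrm{div}(\rho u)$ (via Brezis--Wainger and Desjardins' logarithmic inequality, Lemmas~\ref{lemma: brezis-wainger} and \ref{rhou}), together with the bound for $\log\|\na u\|_{L^2}$ by a power of $\hatrho$ (Proposition~\ref{propn: logY}). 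The condition $\gamma<4\beta-3$ emerges from requiring all the resulting exponents $\alpha_1,\dots,\alpha_4$ of $\hatrho$ to lie strictly below $\beta$ (see \eqref{beta, Nov24}); none of this is visible in your sketch. Two further slips: the regularity $u_t\in L^2(\tau,T;L^2)$ and $\na u\in L^\infty(\tau,T;L^p)$ in the vacuum setting requires the weighted material-derivative estimates of \cite{H1} (compatible with $\rho_0\geq 0$), not a naive time-differentiation of the momentum equation; and $\|\rho-\overline{\rho_0}\|_{L^p}\to0$ cannot follow from ``$\nabla\rho^\beta$ being controlled by $G$'' --- $\rho$ is merely $L^\infty$ and has no spatial derivative; the correct route is $\int_0^\infty\|P-\bar P\|_{L^2}^2\,\dd t<\infty$ plus a differential inequality for $\|P-\bar P\|_{L^2}^2$, as in \S\ref{subsec: large-time}.
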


Improving upon 
Theorems~\ref{thm: VK}, \ref{thm: perepelitsa}, and \ref{thm: Huang-Li}, we obtain the global existence of weak solutions for Equation~\eqref{equ}, provided that
\begin{align*}
    \beta>1 \qquad \text{ and }\qquad \gamma>1.
\end{align*}
We rule out the possibility of vacuum formation at any finite time for weak solutions, provided that the initial density stays essentially away from vacuum or concentration. 

Our main theorem is as follows:
\begin{theorem}\label{thm: main}
Suppose that $\beta>1$ and $\gamma>1$ and the initial data satisfy
\begin{eqnarray*}
    && \left(\rho_0, u_0\right) \in L^\infty(\tor) \times H^{1}\left(\tor;\R^2\right),\\
    && 0<m < \mathop{\rm ess\,inf}_{\tor} \rho_0 \leq \mathop{\rm ess\,sup}_{\tor} \rho_0 < M < +\infty
\end{eqnarray*}
for some $m,M$. Then there exists at least one weak solution $(\rho, u)$ of Equation~\eqref{equ} up to any finite time $T$, satisfying that
\begin{align*}
0< \bar{m}(T) \leq \rho(t,x) \leq \bar{M}(T) < \infty \qquad \text{for each } T\in ]0,+\infty[ \text{ and a.e. } (t,x) \in [0,T]\times \tor. 
\end{align*}

Moreover, if $\beta>{3}/{2}$ and $1<\gamma<4\beta-3$, then there exists a uniform constant $\bar{M}$ (independent of time) such that
\begin{align*}
\rho(t,x) \leq \bar{M} < \infty \qquad \text{for each } t \in [0,\infty[,  \text{ and a.e. } x \in \tor, 
\end{align*}
and that the global weak solutions satisfy the large-time behaviour:
\begin{align*}
    \lim_{t \to \infty} \left\{ \left\| \rho(t,\bullet) - \int_\tor \rho_0(y)\,\dd y\right\|_{L^{p}} + \|\na u(t,\bullet)\|_{L^2}\right\} = 0 \qquad \text{for any } p \geq 1.
\end{align*}
\end{theorem}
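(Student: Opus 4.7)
The strategy decomposes into three steps: approximate the initial data and invoke Theorems~\ref{thm: VK}/\ref{thm: Huang-Li} to produce approximate solutions; derive a priori estimates uniform in the approximation parameter, valid throughout the enlarged regime $\beta>1,\ \gamma>1$; and pass to the limit by weak compactness. The structural starting point is the effective viscous flux $G:=(\rho^\beta+2\mu)\dv u-\rho^\gamma$ and vorticity $\omega:=\p_1u_2-\p_2u_1$, in terms of which the momentum equation rewrites as $\rho\dot u=\na G-\mu\na^\perp\omega$ with $\dot u:=u_t+u\cdot\na u$. The basic energy identity yields the baseline bounds $\sqrt\rho u\in L^\infty_tL^2_x$, $\rho\in L^\infty_tL^\gamma_x$, $\na u\in L^2_{t,x}$, and $\rho^{\beta/2}\dv u\in L^2_{t,x}$, while elliptic regularity gives $\|G\|_{L^p}+\mu\|\omega\|_{L^p}\le C\|\rho\dot u\|_{W^{-1,p}}$; testing the equation of $\dot u$ against $\dot u$ itself boosts these to control of $\int_0^T\int_\tor\rho|\dot u|^2$ and $\sup_t\|\na u(t,\bullet)\|_{L^2}$.

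\textbf{Density bounds — the main novelty.} Along a particle trajectory $X(t,x_0)$, the continuity equation reads
\begin{equation*}
\frac{\dd}{\dd t}\log\rho\bigl(t,X(t,x_0)\bigr) = -\dv u = -\frac{G+\rho^\gamma}{\rho^\beta+2\mu},
\end{equation*}
so pointwise bounds on $\rho$ reduce to pointwise control of (time integrals of) $\dv u$. For the \emph{upper bound}, $\rho^\gamma/(\rho^\beta+2\mu)$ is coercive when $\rho$ is large under $\gamma<4\beta-3$; Zlotnik's lemma together with the elliptic control of $G$ yields $\rho\le\bar M(T)$, and uniformly in $T$ under the Huang--Li hypothesis. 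For the \emph{lower bound}, the obstruction is that $1/(\rho^\beta+2\mu)$ is singular in small-density regions and the gain from the $\rho^\beta$-weight degenerates as $\beta\to 1$, precluding the direct use of Sobolev embeddings beyond the $\beta>3/2$ Huang--Li threshold. The new input, inspired by Danchin--Mucha, is Desjardins' logarithmic interpolation inequality: it provides an $L\log L$-type bound on the kinetic energy density $\rho|u|^2$ strictly beyond the scale of the bare $L^1$ energy, which then --- coupled with the elliptic regularity of $G$ --- upgrades $L^p$-control of $\dv u$ into an estimate of $L^1_t\linf_x$-type up to a logarithmic loss that a Gronwall argument in the $\log\rho$-scale can absorb, closing $\rho\ge\bar m(T)>0$.

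\textbf{Limit passage, long-time behaviour, and main obstacle.} Once uniform pointwise bounds on $\rho^\varepsilon$ are available, together with the energy-flux estimates, Lions--Feireisl compactness yields strong $L^p$ convergence of the densities (the oscillation defect measure vanishes thanks to the $\linf$ control); passage to the limit in the weak formulation then completes the proof of existence. For Part~B, the uniform-in-time density bounds combine with the dissipation $\int_0^\infty\|\na u\|_{L^2}^2\,\dd t<\infty$ and the uniform continuity of $t\mapsto\|\na u(t,\bullet)\|_{L^2}^2$ (supplied by the higher-order estimates) to force $\|\na u(t,\bullet)\|_{L^2}\to 0$ as $t\to\infty$; conservation of total mass and the transport structure of the continuity equation then yield $\rho(t,\bullet)\to\int_\tor\rho_0$ in every $L^p$. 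The main obstacle throughout is the density lower bound in the regime $\beta$ close to $1$: quantifying exactly how the logarithmic gain from Desjardins' inequality offsets the singularity of $1/(\rho^\beta+2\mu)$ in vacuum-prone regions is the technical heart of the argument and will dictate the precise form of the Gronwall estimate that closes the scheme.
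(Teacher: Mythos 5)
Your outline captures the general architecture (approximation, a priori density bounds, compactness) but several of the steps you treat as routine are precisely the ones that fail, and the actual mechanism of the paper is different. First, the claim that testing the momentum equation against $\dot u$ "boosts" the energy estimates to $\sup_t\|\na u\|_{L^2}\le C$ is circular: that estimate requires an upper bound on $\rho$, which is what one is trying to prove. The paper never obtains such a bound a priori; it only proves $\log\big(10+\|\na u\|_{L^2}^2\big)\le C\hatrho^{\,\varsigma}$ (Proposition~\ref{propn: logY}), and the whole scheme is built to tolerate this exponential coupling. Second, your mechanism for the density bounds --- integrating $\frac{\dd}{\dd t}\log\rho=-\divu$ along trajectories and invoking ``elliptic control of $G$'' --- would require \emph{pointwise-in-space} control of the effective viscous flux, which $L^p$ elliptic estimates do not provide. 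The paper instead works with $F=2\mu\log\rho+\rho^\beta/\beta-(-\Delta)^{-1}\dv(\rho u)$; the correction term converts the non-integrable part of $\divu$ into the Lions commutator $[u_i,R_iR_j](\rho u_j)$, and it is \emph{this} commutator whose $\linf$-norm is bounded via Desjardins' inequality combined with Gagliardo--Nirenberg and the Coifman--Rochberg--Weiss estimates (Proposition~\ref{propn: boundG}). Your proposal never introduces the commutator, which is the central object of the paper, and it presents Desjardins' inequality as relevant only to the lower bound, whereas it drives the upper bound as well. Relatedly, the restriction $\beta>3/2$, $1<\gamma<4\beta-3$ does not come from coercivity of $\rho^\gamma/(\rho^\beta+2\mu)$ (that only needs $\gamma>\beta$) but from requiring all the exponents $\alpha_1,\dots,\alpha_4$ of $\hatrho$ generated by these estimates to be strictly less than $\beta$ so that the Gr\"onwall argument for $F^+$ closes.

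Third, the compactness step cannot be delegated to ``Lions--Feireisl compactness'' with the remark that the oscillation defect measure vanishes thanks to the $\linf$ control: uniform $\linf$ bounds on $\rho^n$ do not imply strong convergence, and because the viscosity $\lambda(\rho)=\rho^\beta$ is density-dependent the standard renormalised theory does not apply directly. The paper shows $\Psi:=\rhos-\rho^2=0$ by a Yudovich-type interpolation argument in which the explicit constant $C\sqrt{q}$ of Lemma~\ref{1} is essential (\S\ref{sec: final}); and the existence part under the mere assumption $\beta>1$ additionally requires the Va\u{\i}gant--Kazhikhov bound $\|\rho\|_{L^p}\le Cp^{2/(\beta-1)}$ and the Huang--Li bound on $\int_\tor\rho|u|^{2+\nu}\,\dd x$, neither of which appears in your plan. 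As written, the proposal names the right tools but not the estimates that make them fit together, so the key steps remain unproved.
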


\begin{remark}
In Theorem~\ref{thm: main} above, the constants may be taken as follows:
\begin{align*}
\bar{M} = \bar{M}\big(\mu, \gamma, \beta, M, E_0\big),\qquad \bar{m} = \bar{m}\big(\mu, \gamma, \beta, M, m, E_0\big),\qquad\bar{M}(T) = \bar{M}\big(T, \mu, \gamma, \beta, M, E_0\big).
\end{align*}
Here $E_0$ is the total initial energy (see Lemma~\ref{lemma: energy ineq} below):
\begin{align*}
    E_0:=\int_{\mathbb{T}^2} \left\{\frac{\rho_0(x)|u_0(x)|^2}{2} +\frac{\big(\rho_0(x)\big)^\gamma}{\gamma-1}\right\}\,\mathrm{d}x.
\end{align*}
\end{remark}


Our strategy for proving Theorem~\ref{thm: main} is largely motivated by the recent important work by Danchin--Mucha \cite{Danchin} (2023), which established the global existence of strong solutions for two-dimensional compressible Navier--Stokes equations with arbitrarily large initial velocity and almost constant density under the assumption of large bulk viscosity. In particular, we obtain from \cite{Danchin} the insight that the logarithmic interpolation inequality \emph{\`{a} la}
Desjardins \cite{Desjardins} may serve as the key ingredient for the proof.

Before further development, we first report some notations  used throughout this paper.
	$$\left\{
	\begin{aligned}
		& \partial_i=\frac{\partial}{\partial x_i},\quad \bar{f}=\int_{\mathbb{T}^2} f(t,x)\,\mathrm{d}x,\quad f_{U}=\frac{1}{|U|}\int_{U} f(t,x)\,\mathrm{d}x,\\
		&\nabla=(\partial_1,\partial_2),\quad \nabla^{\bot}=(\partial_2,-\partial_1),\\
		&\mathrm{div}u=\nabla\cdot u,\quad \mathrm{rot} u=\nabla^{\bot}\cdot u,\\
		&\frac{\mathrm{D}}{\mathrm{D}t} f \equiv \dot{f} := \frac{\partial}{\partial t}f +u\cdot \nabla f,\quad \tor=\left[-\frac{1}{2},\frac{1}{2}\right]^2\subset \mathbb{R}^2.
	\end{aligned}
	\right.$$
Also, unless otherwise specified, the Sobolev norms are taken only with respect to the spatial variable. That is, we understand $\|\na u\|_{L^2}$ as $\|\na u(t,\cdot)\|_{L^2}$, and similarly for the other norms.

The major steps for the proof of Theorem~\ref{thm: main} are outlined as follows. 
\begin{itemize}
    \item 
First, as per \cite{perep, lions, fnp, vaigant}, we rewrite Equation~\eqref{equ} in the form of an evolution equation, with the source term equal to the commutator
\begin{align}\label{G, def}
G := \sum_{i,j \in \{1,2\}}    \big[u_i,(-\Delta)^{-1}\partial_i\partial_j\big](\rho u_j) \equiv \sum_{i,j \in \{1,2\}} \big[u_i, R_iR_j\big](\rho u_j),
\end{align}
where $R_i$ denotes the $i^{\text{th}}$ Riesz transform. See Equations~\eqref{eq1} and \eqref{ui} below.

\item 
Then, by the theory of compensated compactness (see, \emph{e.g.}, Coifman--Lions--Meyer--Semmes \cite{Coi}, Coifman--Rochberg--Weiss\cite{Coifman}, and Coifman--Meyer \cite{Coifman1}), we may bound $\|\na u\|_{L^2}$ by the exponential function of powers of $\|\rho\|_\linf$. More precisely, it is shown that  $\log\left(e^2+\|\nabla u\|_{L^2}^2\right)$ does not exceed a polynomial function of $\|\rho\|_{L^{\infty}}$. See Proposition~\ref{propn: logY} below. Similar arguments can be found in Huang--Li \cite{H1} and Perepelitsa \cite{perep}.

\item 
The essentially novel ingredient of this paper lies in deriving the uniform-in-time upper and lower bounds for $\|\rho\|_\linf$. The key here is to estimate $\|G\|_\linf$, which, thanks to the usual Gagliardo--Nirenberg--Sobolev inequality, amounts to controlling $\left\|\rho^{\frac{1}{q}}u \right\|_{L^{q}}$.

To this end, we employ Lemma~\ref{rhou}, which is a weighted endpoint Sobolev inequality of the Brezis--Wainger type \emph{\`{a} la} B. Desjardins \cite{Desjardins}, to obtain that $$\left\|\rho^{\frac{1}{q}}u \right\|_{L^{q}} \lesssim \|\na u\|_{L^2}^{1-\frac{2}{q}} \times \log\left(2+\|\na u\|^2_{L^2}\right)^{\frac{1}{2}-\frac{1}{q}}\qquad\text{for any } q>2.$$
See Proposition~\ref{propn: boundG} below for details. This, together with the estimates established in the previous steps, yields the conclusion via Gronwall's inequality. As aforementioned, these arguments are motivated by the recent important work by Danchin--Mucha \cite{Danchin} (2023). 
\end{itemize}



The remaining parts of the paper are organised as follows. In Section~\ref{sec: prelim}, we collect some elementary facts and inequalities which will be needed in later analysis. Section~\ref{sec: log Y} is devoted to deriving several important \emph{a priori} estimates for strong solutions, culminating in the bound for $\log Y$ in Proposition~\ref{propn: logY}. Here $$ Y^2(t) :=\int_{\mathbb{T}^2} \left\{\mu|\mathrm{rot} u(t,x)|^2+\frac{B^2(t,x)}{\lambda(t,x)+2\mu}\right\}\,\mathrm{d}x,$$ in which $B$ is the effective viscous flux: $$B(t,x):=(\lambda(t,x)+2\mu )\mathrm{div} u(t,x) -\big(P(t,x)-\bar{P}(t)\big).$$ Then, in Section~\ref{sec: G}, uniform bounds for the commutator term $G$ will be derived:
\begin{align*}
G := \sum_{i,j \in \{1,2\}}    \big[u_i,(-\Delta)^{-1}\partial_i\partial_j\big](\rho u_j) \equiv \sum_{i,j \in \{1,2\}} \big[u_i, R_iR_j\big](\rho u_j),
\end{align*}
where $R_i$ is the $i^{\text{th}}$ Riesz transform. The main result of our paper, Theorem~\ref{thm: main}, shall be proved in Sections~\ref{sec: rho bound} and \ref{sec: global existence} --- In Section~\ref{sec: rho bound} we work under the assumption $\beta>4/3;  
\gamma>1$ as in Huang--Li \cite{H1} (but with initial density bounded below away from zero) and deduce the large-time behaviour of $\rho$ and $\na u$.   Finally, in Section~\ref{sec: global existence}, we establish the global existence of weak solutions up to any finite time $T \in ]0,\infty[$, under the more general assumption $\beta>1; \gamma>1$.

Before concluding the introduction, we reiterate that  our work builds essentially upon  Va\u{\i}gant--Kazhikhov \cite{vaigant}, Perepelitsa \cite{perep}, Huang--Li \cite{H1}, and Danchin--Mucha  \cite{Danchin}. Nevertheless, various novel ideas and estimates are developed to achieve the proof of our Main Theorem~\ref{thm: main}.

The following questions would be interesting for future investigation: 
\begin{itemize}
\item
Can we prove the global existence of weak solutions for the barotropic compressible Navier--Stokes equations, also in the regime $\beta>1$ and $\gamma>1$, for the initial density $\rho_0$ admitting vacuum (and with suitable compatibility conditions)?
\item
Can we rule out the possibility of the formation of vacuum at infinite time? That is, do we have ${\rm ess\,inf}_{]0,\infty[\times\tor}\rho(t,x)>0$?
\end{itemize}

\section{Preliminaries}\label{sec: prelim}

This section collects several analytic tools that shall be used in the later developments. Throughout, we write $C=C(a_1, a_2, \ldots, a_n)$ to denote that the constant $C$ depends only on parameters $a_1, a_2, \ldots, a_n$. It may change from line to line.
 
We start with the following interpolation inequality, which is an instance of the Gagliardo--Nirenberg--Sobolev inequalities (\emph{cf}. Ladyzhenskaya--Solonnikov--Ural'ceva \cite{lsu}; see also Huang--Li \cite[Lemma~2.2]{H1}):
	\begin{lemma}
		\label{1}
For each $2<q<\infty$, there exists a universal constant $C$ such that for each $f\in H^1(\mathbb{T}^2)$, the following estimate holds:
		\begin{equation*}
			\|f\|_{L^q} \leq C \sqrt{q}\|f\|_{L^2}^{\frac{2}{q}} \|f\|_{H^1}^{1-\frac{2}{q}}.
		\end{equation*}
	\end{lemma}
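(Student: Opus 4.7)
Lemma~\ref{1} is a quantitative two-dimensional Gagliardo--Nirenberg--Sobolev interpolation inequality; beyond the standard GNS embedding, the novelty is the explicit $\sqrt{q}$-tracking of the prefactor as $q\to\infty$. This scaling is sharp and reflects the marginal failure of $H^1(\tor)\hookrightarrow L^\infty(\tor)$.

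My plan is to first establish the endpoint bound $\|f\|_{L^q}\leq C\sqrt{q}\,\|f\|_{H^1}$ via the Moser--Trudinger inequality on $\tor$: there exist universal constants $\alpha_0, C_0 > 0$ such that
\begin{equation*}
\int_{\tor}\exp\!\bigl(\alpha_0 |f|^2/\|f\|_{H^1}^2\bigr)\,\dd x\leq C_0 \qquad\text{for all } f\in H^1(\tor).
\end{equation*}
Termwise Taylor expansion of the exponential (justified by monotone convergence) yields
\begin{equation*}
\|f\|_{L^{2k}}^{2k}\leq \frac{C_0\,k!}{\alpha_0^k}\,\|f\|_{H^1}^{2k}\qquad \text{for every } k\in\mathbb{N}.
\end{equation*}
Stirling's formula $k!\leq Ck^{k+1/2}e^{-k}$ extracts the $2k$-th root to give $\|f\|_{L^{2k}}\leq C\sqrt{k}\,\|f\|_{H^1}$, and log-convexity of $L^p$-norms in $p$ extends the bound to all real $q\in[2,\infty)$.

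To upgrade this to the interpolated form featuring $\|f\|_{L^2}^{2/q}$, I would work on the Fourier side. By Hausdorff--Young, $\|f\|_{L^q}\leq\|\widehat f\|_{\ell^{q'}}$ with $q'=q/(q-1)$; a three-term H\"older inequality applied to the decomposition
\begin{equation*}
|\widehat f(k)|^{q'} = |\widehat f(k)|^{2q'/q}\cdot\bigl[(1+|k|)^s|\widehat f(k)|\bigr]^{(q-2)q'/q}\cdot(1+|k|)^{-s(q-2)q'/q}
\end{equation*}
(with $s>1$) and the balance $p_1 = q-1$, $p_2 = p_3 = 2(q-1)/(q-2)$ leads to
\begin{equation*}
\|f\|_{L^q}\leq \|f\|_{L^2}^{2/q}\,\|f\|_{H^s}^{1-2/q}\cdot S(s)^{(1-2/q)/2},
\end{equation*}
where $S(s) = \sum_{k\in\mathbb{Z}^2}(1+|k|)^{-2s}\asymp 1/(s-1)$ as $s\to 1^+$. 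Choosing $s=1+O(1/q)$ so that $S(s)\asymp q$ and absorbing the small discrepancy between $\|f\|_{H^s}$ and $\|f\|_{H^1}$ into the multiplicative constant yields the asserted $\sqrt{q}$-prefactor.

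The principal obstacle is precisely the near-critical tuning of the weight exponent: $2s$ must exceed the critical threshold $2$ for summability on $\mathbb{Z}^2$, yet cannot be taken much larger than $2$ without introducing a norm $\|f\|_{H^s}$ strictly stronger than $\|f\|_{H^1}$. The resulting delicate balance is a manifestation of the failure of the borderline 2D Sobolev embedding and precisely captures the sharp $\sqrt{q}$-growth; it is in the same spirit as the logarithmic interpolation inequalities of Desjardins~\cite{Desjardins} invoked later in the paper and the classical treatment of Ladyzhenskaya--Solonnikov--Ural'ceva~\cite{lsu}.
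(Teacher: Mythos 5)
The paper does not prove Lemma~\ref{1}; it quotes it from Ladyzhenskaya--Solonnikov--Ural'ceva and Huang--Li, so there is no in-paper argument to compare against. Judged on its own, your proposal is half right. The first half (Moser--Trudinger, termwise expansion of the exponential, Stirling, then log-convexity in $p$) correctly yields the endpoint bound $\|f\|_{L^q}\le C\sqrt{q}\,\|f\|_{H^1}$ with the sharp $\sqrt{q}$ growth; note only that the Trudinger inequality as stated in the paper (Lemma~\ref{Tru}) involves $f-\bar f$ and $\|\nabla f\|_{L^2}$, so you need the trivial reduction $|f|^2\le 2|f-\bar f|^2+2|\bar f|^2$ and $|\bar f|\le\|f\|_{H^1}$ first.

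The second half, however, contains a genuine gap. Your Fourier-side computation is arithmetically correct up to the bound $\|f\|_{L^q}\le \|f\|_{L^2}^{2/q}\|f\|_{H^s}^{1-2/q}S(s)^{(1-2/q)/2}$ with $s>1$, but the final step --- ``absorbing the small discrepancy between $\|f\|_{H^s}$ and $\|f\|_{H^1}$ into the multiplicative constant'' --- is not a small discrepancy: for every $s>1$, no matter how close to $1$, the inequality $\|f\|_{H^s}\le C\|f\|_{H^1}$ is false, and a generic $f\in H^1(\tor)$ has $\|f\|_{H^s}=+\infty$. This is exactly the critical-exponent obstruction you identify, and choosing $s=1+O(1/q)$ does not circumvent it. Two standard repairs are available. (i) Keep the weight at exactly $s=1$ but split frequencies at a threshold $R$: estimate $\sum_{|k|\le R}|\widehat f(k)|^{q'}$ by H\"older against $\|\widehat f\|_{\ell^2}$ and the lattice-point count $\#\{|k|\le R\}\lesssim R^2$, estimate the tail $\sum_{|k|>R}$ against $\|f\|_{H^1}$ and the convergent sum $\sum_{|k|>R}|k|^{-2q/(q-2)}$, and optimise over $R\sim \|f\|_{H^1}/\|f\|_{L^2}$; tracking the tail constant reproduces the $\sqrt{q}$. (ii) More simply, combine your own Step 1 with Cauchy--Schwarz: $\|f\|_{L^q}^q=\int|f|^2|f|^{q-2}\,\dd x\le\|f\|_{L^2}^2\|f\|_{L^{2(q-2)}}^{q-2}$, apply the endpoint bound $\|f\|_{L^{2(q-2)}}\le C\sqrt{q}\,\|f\|_{H^1}$ (or $\|f\|_{L^{2(q-2)}}\le\|f\|_{H^1}$ when $2(q-2)\le 2$), and take $q$-th roots; since $(C\sqrt{q})^{(q-2)/q}\le C\sqrt{q}$, this gives the lemma with no Fourier analysis at all. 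Either fix closes the argument; as written, it does not.
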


\begin{remark}\label{remark: const}
The concrete form of the constant --- $C\sqrt{q}$ for $C$ independent of $q$ --- shall play a crucial role in the proof of our Main Theorem~\ref{thm: main}; see, in particular, the estimate~\eqref{final, Nov24} in \S\ref{sec: final}. 
\end{remark}

The div-curl estimate below follows from standard elliptic estimates.
 \begin{lemma}
		\label{lemma1}
  Let $1<q<\infty$. For any vector-valued function $f=(f_1,f_2) \in L^q(\tor, \R^2)$ such that $F:=\left({\rm div}f, {\rm rot} f \right)$ belongs to $L^q(\tor, \R^2)$, it holds that $f \in W^{1,q}(\tor,\R^2)$. In addition, there exists a constant $C=C(q)$ such that 
		$$\Vert \nabla f\Vert_{L^q}\leq C\Vert F\Vert_{L^q}.$$
	\end{lemma}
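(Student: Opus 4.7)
The statement is a classical Hodge/div--curl estimate on the two-torus, so the natural route is to reduce it to the $L^q$-boundedness of the Riesz transforms (Calder\'on--Zygmund theory). I will first normalise $f$ by subtracting its spatial mean $\bar f$, noting that $\bar f$ contributes $0$ to $\nabla f$, to $\dv f$ and to $\rotu f$; hence it suffices to establish the claim for zero-mean $f$ and then simply add $\bar f$ back to recover $f \in W^{1,q}$ from the hypothesis $f \in L^q$.

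Assuming $\int_{\tor} f\,\dd x=0$, I would construct the Helmholtz decomposition by hand. Let $\phi$ and $\psi$ be the unique zero-mean solutions on $\tor$ of
\[
\Delta \phi = \dv f, \qquad \Delta \psi = \rotu f,
\]
which exist because $\int_{\tor}\dv f=\int_{\tor}\rotu f=0$ for any distributional $f$ on the torus. Standard periodic Calder\'on--Zygmund estimates give $\|\nabla^2\phi\|_{L^q}+\|\nabla^2\psi\|_{L^q}\le C(q)\|F\|_{L^q}$. Setting $\tilde f := \nabla \phi + \nabla^{\bot}\psi$ and using the conventions $\nabla^{\bot}=(\partial_2,-\partial_1)$, $\rotu=\partial_2(\cdot)_1-\partial_1(\cdot)_2$ from the paper, one checks directly that $\dv \tilde f = \Delta \phi = \dv f$ and $\rotu \tilde f = \Delta \psi = \rotu f$.

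The remaining step is to show that $w:=f-\tilde f$ is constant, so that $\nabla f = \nabla \tilde f$ and the desired inequality follows from the Calder\'on--Zygmund bounds above. Since $\dv w = \rotu w = 0$ in $\mathscr{D}'(\tor)$, writing Fourier series $w(x)=\sum_{\xi\in\mathbb{Z}^2}\hat w(\xi)e^{2\pi i\xi\cdot x}$ yields $\xi\cdot\hat w(\xi)=0$ and $\xi^{\bot}\cdot\hat w(\xi)=0$ for all $\xi$; for $\xi\neq 0$ the pair $\{\xi,\xi^{\bot}\}$ spans $\mathbb{R}^2$, forcing $\hat w(\xi)=0$. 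Combined with the zero-mean normalisation this gives $w\equiv 0$, hence $\|\nabla f\|_{L^q}=\|\nabla\tilde f\|_{L^q}\le C(q)\|F\|_{L^q}$.

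The whole argument is routine; the only conceptual care needed is the Fourier-side verification that a distributional div-free and curl-free field on $\tor$ is constant (which replaces the use of Liouville-type arguments on $\R^2$), and of course the invocation of $L^q$-boundedness of the second Riesz transforms for $1<q<\infty$, which is where the dependence of the constant on $q$ enters.
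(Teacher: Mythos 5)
Your proof is correct: the Helmholtz decomposition $\tilde f=\nabla\phi+\nabla^{\bot}\psi$, the Calder\'on--Zygmund bounds for $\nabla^2\phi,\nabla^2\psi$, and the Fourier-side verification that a div-free, curl-free periodic field is constant all check out (and $\tilde f$ automatically has zero mean, so $w\equiv0$ after the normalisation). The paper itself gives no proof, stating only that the lemma ``follows from standard elliptic estimates''; your argument is exactly the standard route being invoked.
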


Next, we recall the endpoint Sobolev inequality for $\linf$ \emph{\`{a} la} Brezis--Wainger \cite{BW}.
 
	\begin{lemma}\label{lemma: brezis-wainger}
		Let $q>2$. There exists a positive constant $C=C(q)$  such that for every function $f\in W^{1,q}(\mathbb{T}^2)$, we have that
		\begin{equation*}\|f\|_{L^{\infty}}\leq C\Big(\|\nabla f\|_{L^2} \sqrt{\log (e+\|\nabla f\|_{L^q})} +\|f\|_{L^2} +1\Big).
		\end{equation*}
	\end{lemma}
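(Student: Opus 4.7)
The plan is to exploit a frequency decomposition of $f$, splitting it into low- and high-frequency parts at a cut-off $N$ to be optimized at the end. Since we are on the flat torus I would work with the Fourier series $f(x) = \sum_{k \in \mathbb{Z}^2} \widehat{f}(k)\, e^{2\pi i k \cdot x}$ (a Littlewood--Paley decomposition $f = \sum_{j \geq -1} \Delta_j f$ works equally well), and write $f = f_{\leq N} + f_{> N}$ according to whether $|k| \leq N$. For the low-frequency piece, Cauchy--Schwarz on the Fourier side together with the two-dimensional estimate $\sum_{1 \leq |k| \leq N} |k|^{-2} \lesssim \log N$ gives
$$\|f_{\leq N}\|_{L^\infty} \,\leq\, |\widehat{f}(0)| + \sum_{1 \leq |k| \leq N} \frac{1}{|k|}\cdot |k|\,|\widehat{f}(k)| \,\lesssim\, \|f\|_{L^2} + \sqrt{\log N}\,\|\nabla f\|_{L^2}.$$
The logarithm is precisely the signature of $d=2$ being the critical dimension for the Sobolev embedding $H^{1} \hookrightarrow L^\infty$.

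For the high-frequency piece I would apply Bernstein's inequality on each dyadic shell, $\|\Delta_j f\|_{L^\infty} \lesssim 2^{2j/q}\|\Delta_j f\|_{L^q} \lesssim 2^{j(2/q-1)}\|\nabla f\|_{L^q}$, where the uniform-in-$j$ $L^q$-boundedness of the Littlewood--Paley projectors comes from H\"{o}rmander--Mikhlin. Since $q>2$ forces $2/q-1<0$, summing the geometric series over $2^j > N$ yields
$$\|f_{>N}\|_{L^\infty} \,\lesssim_{q}\, N^{2/q-1}\,\|\nabla f\|_{L^q}.$$
Choosing $N$ so that $N^{1-2/q} \sim e + \|\nabla f\|_{L^q}$ makes this contribution $O(1)$, while simultaneously $\sqrt{\log N} \lesssim_{q} \sqrt{\log(e + \|\nabla f\|_{L^q})}$. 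Assembling the two estimates produces exactly the claimed inequality, with the additive $+1$ absorbing residual constants.

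The main technical obstacle, albeit a mild one, lies in tracking the $q$-dependence of the implicit constants: the optimization coefficients blow up like $q/(q-2)$ as $q \downarrow 2$, which is reassuring since the inequality genuinely fails at the endpoint $q=2$. If one wishes to avoid Littlewood--Paley machinery altogether, an equivalent route proceeds by mollifying $f$ at scale $N^{-1}$ and controlling $\|f - f_N\|_{L^\infty}$ through the Morrey embedding $W^{1,q} \hookrightarrow C^{0,1-2/q}$ applied to the difference $f-f_N$, while $\|f_N\|_{L^\infty}$ is handled by the $L^2$-gradient bound; optimizing in $N$ recovers the same result. I would present the Fourier-series version in detail, since the $\sqrt{\log}$ factor and its constants emerge there most transparently.
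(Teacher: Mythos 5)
Your argument is correct. Note, however, that the paper does not prove this lemma at all: it is recalled verbatim from Brezis--Wainger \cite{BW}, so there is no in-paper proof to match. Your frequency-splitting derivation is the standard modern route to this inequality and is sound: Cauchy--Schwarz on the Fourier side with $\sum_{1\leq |k|\leq N}|k|^{-2}\lesssim \log N$ gives the low-frequency bound $\|f\|_{L^2}+\sqrt{\log N}\,\|\nabla f\|_{L^2}$, Bernstein plus the reverse Bernstein inequality on dyadic shells gives $N^{2/q-1}\|\nabla f\|_{L^q}$ for the tail, and the choice $N^{1-2/q}\sim e+\|\nabla f\|_{L^q}$ yields exactly the stated estimate with $C=C(q)$ blowing up as $q\downarrow 2$, as it must. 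This differs in flavour from the original Brezis--Wainger argument, which works with Bessel potential representations and rearrangement/O'Neil-type estimates rather than Littlewood--Paley theory; your version is more elementary on the torus and makes the origin of the $\sqrt{\log}$ completely transparent. One presentational point to fix when writing it up: you define $f_{\leq N}$ by the sharp Fourier cutoff $|k|\leq N$ but then sum the high frequencies over dyadic blocks $2^j>N$; these two decompositions do not literally coincide (and the sharp disc cutoff is not an $L^q$ multiplier for $q\neq 2$), so you should use the smooth Littlewood--Paley truncation $S_Jf=\sum_{j\leq J}\Delta_j f$ with $2^J\sim N$ throughout --- the low-frequency Cauchy--Schwarz estimate applies to $S_Jf$ unchanged since its Fourier coefficients are dominated by those of $f$ and supported in $|k|\lesssim 2^J$.
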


To control the deviation of a Sobolev function from its mean value, we shall make use of the Trudinger inequality. See \emph{e.g.}, \cite{ps}.
	\begin{lemma}
		\label{Tru}
		There are universal constants $c_1, c_2>0$ such that for all $f\in H^1(\mathbb{T}^2)$, it holds that
		$$\int_{\mathbb{T}^2} \exp\left(\frac{\left|f(x)-\bar{f}\right|^2}{c_1\|\nabla f\|_{L^2}^2}\right)\,\mathrm{d}x\leq c_2.$$
	\end{lemma}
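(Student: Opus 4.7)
\textbf{Proof plan for Lemma~\ref{Tru}.}
The plan is to establish the inequality via a Taylor expansion of the exponential combined with the Gagliardo--Nirenberg--Sobolev inequality of Lemma~\ref{1}, whose explicit $\sqrt{q}$ dependence (\emph{cf}.\ Remark~\ref{remark: const}) is precisely what makes the series convergent and thus underlies the Trudinger-type endpoint.

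First, I would reduce to the zero-mean case. Setting $g := f - \bar f$, the Poincar\'e--Wirtinger inequality on $\tor$ yields $\|g\|_{L^2}\le C_P\|\nabla g\|_{L^2}$, so $\|g\|_{H^1}\le (1+C_P^2)^{1/2}\|\nabla g\|_{L^2}$. By homogeneity, it suffices to prove
\begin{equation*}
\int_{\tor}\exp\!\left(\frac{g(x)^2}{c_1}\right)\dd x \le c_2 \qquad\text{whenever } \|\nabla g\|_{L^2}=1 \text{ and } \bar g=0,
\end{equation*}
since the general case follows by replacing $g$ by $g/\|\nabla g\|_{L^2}$ (the case $\|\nabla g\|_{L^2}=0$ is trivial, as then $g\equiv 0$).

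Next, I would combine Lemma~\ref{1} with the Poincar\'e estimate above to obtain, for every integer $k\ge 1$,
\begin{equation*}
\|g\|_{L^{2k}} \le C_0\sqrt{2k}\,\|g\|_{L^2}^{1/k}\|g\|_{H^1}^{1-1/k} \le C_1\sqrt{2k}\,\|\nabla g\|_{L^2} = C_1\sqrt{2k},
\end{equation*}
where $C_1$ is a universal constant. Expanding the exponential as a power series and applying monotone convergence gives
\begin{equation*}
\int_{\tor}\exp\!\left(\frac{g^2}{c_1}\right)\dd x
= 1+\sum_{k=1}^{\infty}\frac{1}{k!\,c_1^{k}}\int_{\tor} g^{2k}\dd x
\le 1+\sum_{k=1}^{\infty}\frac{(C_1^{2}\cdot 2k)^{k}}{k!\,c_1^{k}}.
\end{equation*}

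Finally, I would invoke Stirling's estimate $k!\ge (k/e)^{k}$, which yields ${k^{k}}/{k!}\le e^{k}$, and therefore
\begin{equation*}
\sum_{k=1}^{\infty}\frac{(2C_1^{2})^{k}k^{k}}{k!\,c_1^{k}} \le \sum_{k=1}^{\infty}\left(\frac{2eC_1^{2}}{c_1}\right)^{k}.
\end{equation*}
Choosing $c_1 > 2eC_1^{2}$ (for instance $c_1 := 4eC_1^{2}$) makes this a convergent geometric series bounded by a universal constant $c_2$, and the proof is complete. The only mildly delicate point---hardly an obstacle---is keeping track that all constants arising from Lemma~\ref{1}, Poincar\'e, and Stirling are truly universal (independent of $f$), so that the resulting $c_1,c_2$ depend on nothing but the domain $\tor$.
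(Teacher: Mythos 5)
Your proposal is correct. The paper itself offers no proof of Lemma~\ref{Tru} --- it simply cites the literature (\cite{ps}) --- so your argument is a genuinely self-contained alternative: it is the classical Moser-style power-series proof of the Trudinger inequality, and it has the pleasant feature of being driven entirely by machinery the paper already sets up, namely the explicit $\sqrt{q}$ growth of the constant in Lemma~\ref{1} (exactly the feature highlighted in Remark~\ref{remark: const}, where the same $\sqrt{q}$ dependence is exploited for a different purpose in \S\ref{sec: final}). The reduction by subtracting the mean and normalising $\|\nabla g\|_{L^2}=1$, the term-by-term bound $\int g^{2k}\le (2C_1^2k)^k$, and the Stirling estimate to sum the series are all sound; Tonelli justifies the interchange of sum and integral since all terms are nonnegative. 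Two cosmetic points you should smooth over in a final write-up: Lemma~\ref{1} is stated only for $q>2$, so the $k=1$ term of the series must be handled separately (it follows at once from Poincar\'e--Wirtinger, so nothing is at stake); and the normalisation step should absorb the factor $C_P^{1/k}(1+C_P^2)^{(1-1/k)/2}$ into a single universal $C_1$ by taking the supremum over $k$, which is finite --- worth one line so the reader sees that $C_1$ really is independent of $k$.
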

	 Let $\mathcal{H}^1$ and $\mathcal{BMO}$ stand for the usual Hardy and BMO spaces defined over $\tor$:
	\begin{equation}
		\label{hardy}
		\mathcal{H}^1 =\left\{f\in L^{1}(\mathbb{T}^2): \|f\|_{\mathcal{H}^1}= \|f\|_{L^1}+\|R_1 f\|_{L^1} +\|R_2f\|_{L^1} <+\infty\right\},
	\end{equation}
	\begin{equation}
		\label{bmo}
		\mathcal{BMO} =\left\{ f\in L_{loc}^1(\mathbb{T}^2): \|f\|_{\mathcal{BMO}} <+\infty\right\}
	\end{equation}
	with $$\|f\|_{\mathcal{BMO}}=\sup\limits_{x\in\mathbb{T}^2, \,0<r<1}\frac{1}{|\Omega_r(x)|}\int_{\Omega_r(x)} \left|f(y)-f_{\Omega_r(x)}\right| \,\mathrm{d}y,$$
	where $\Omega_r(x) =\mathbb{T}^2\cap B_r(x)$. 
Fefferman's seminal result \cite{Feff} shows that $\mathcal{BMO}$ is the dual space of $\mathcal{H}^1$. Also, it follows from Lemma~\ref{Tru} and the definition of $\mathcal{BMO}$ that for any $f\in \mathcal{BMO}$, 
	\begin{equation}
		\label{bmo2}
		\|f\|_{\mathcal{BMO}}\leq C\|\nabla f\|_{L^2}.
	\end{equation}
 Here $C>0$ is a universal constant.

 In the subsequent developments, we also need the commutator estimates due to Coifman--Rochberg--Weiss\cite{Coifman} and Coifman--Meyer \cite{Coifman1}. Denote $(-\Delta)^{-1}$ the inverse of  Laplacian with zero mean on $\mathbb{T}^2$, and $$R_i=(-\Delta)^{-\frac{1}{2}}\partial_i,\qquad i \in \{1,2\}$$ the usual Riesz transforms on $\mathbb{T}^2$. The composition of Riesz transforms $R_i\circ R_j$ may be represented as a singular integral operator:
	\begin{equation*}
		R_i\circ R_j(f) (x)= {\rm p.v.}\int_{\mathbb{T}^2} K_{ij}(x-y) f(y)\,\mathrm{d}y.
	\end{equation*}
Here \emph{p.v.} denotes the principal value integral, and the kernel $K_{ij}(x)$ ($i,j \in \{1,2\}$ has a singularity of the second order at 0: $$|K_{ij}(x)|\lesssim |x|^{-2}$$ modulo a uniform constant. Then, given a function $g$, consider the linear operator:
	$$[g,R_iR_j](f) :=g R_i\circ R_j(f)-R_i\circ R_j(gf), \qquad i,j\in \{1,2\}.$$ It can be written as a convolution operator with the singular kernel $K_{ij}$:
	$$[g,R_iR_j](f)(x)= {\rm p.v.}\int_{\mathbb{T}^2} K_{ij}(x-y)\big(g(x)-g(y)\big) f(y)\,\mathrm{d}y.$$
	In view of the results of Coifman--Rochberg--Weiss\cite{Coifman} and Coifman--Meyer \cite{Coifman1}, this commutator operator can be estimated as follows:
	\begin{lemma}
		\label{commu}The following map
		$$ \begin{aligned}
			W^{1,r_1}(\mathbb{T}^2,\R^2)\times L^{r_2}(\mathbb{T}^2,\R^2) &\longrightarrow W^{1,r_3}(\mathbb{T}^2)\\
			(g,f)&\longrightarrow [g_j, R_iR_j]f_i,
		\end{aligned}$$
		is continuous when $\frac{1}{r_3}=\frac{1}{r_1}+\frac{1}{r_2}$. Indeed, there is a  positive constant $C=C(r_1,r_2,r_3)$ such that 
  \begin{align*}
      \big\|\nabla\big([g_j, R_iR_j]f_i\big)\big\|_{L^{r_3}}\leq C\| \nabla g\|_{L^{r_1}} \| f\|_{L^{r_2}}.
  \end{align*}
In addition, for each $1<q<\infty$,  there is a positive constant $C'=C'(q)$ such that	$$\big\|[g_j, R_iR_j]f_i\big\|_{L^q}\leq C'\| g\|_{\mathcal{BMO}} \Vert f\Vert_{L^q}.$$
	\end{lemma}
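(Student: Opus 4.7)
The plan is to derive both estimates as direct consequences of classical commutator/bilinear results for Calder\'{o}n--Zygmund singular integrals. Since $R_iR_j = \partial_i\partial_j(-\Delta)^{-1}$ has Fourier multiplier $\xi_i\xi_j/|\xi|^2$ (on mean-zero functions over $\tor$) and kernel $K_{ij}$ satisfying $|K_{ij}(x)|\lesssim |x|^{-2}$ together with the standard smoothness and cancellation conditions of a Calder\'{o}n--Zygmund operator, known black-box theorems apply.

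For the $\mathcal{BMO}$-based bound, I would invoke the Coifman--Rochberg--Weiss commutator theorem \cite{Coifman}: for any Calder\'{o}n--Zygmund operator $T$ of the above type and any $b\in\mathcal{BMO}$, the commutator $[b,T]$ is bounded on $L^q(\tor)$ for each $1<q<\infty$, with operator norm $\leq C(q)\|b\|_{\mathcal{BMO}}$. Taking $T = R_iR_j$ and $b = g_j$, then summing over $i,j\in\{1,2\}$, yields the stated inequality at once.

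For the gradient estimate, I would work from the kernel representation
\[
[g_j,R_iR_j]f_i(x) \;=\; \mathrm{p.v.}\!\int_{\tor} K_{ij}(x-y)\bigl(g_j(x)-g_j(y)\bigr)\,f_i(y)\,\dd y,
\]
and differentiate in $x_k$. Two contributions arise: (a) the ``boundary'' term $\partial_k g_j(x)\cdot (R_iR_j f_i)(x)$, bounded in $L^{r_3}$ by H\"older's inequality and the $L^{r_2}$-continuity of $R_iR_j$, giving $\lesssim \|\nabla g\|_{L^{r_1}}\|f\|_{L^{r_2}}$; and (b) a bilinear singular integral with kernel $\partial_{x_k}K_{ij}(x-y)\bigl(g_j(x)-g_j(y)\bigr)$. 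For (b), a first-order Taylor expansion $g_j(x)-g_j(y)=(x-y)\cdot\int_0^1 \nabla g_j\bigl(y+t(x-y)\bigr)\,\dd t$ absorbs one of the three orders of singularity of $\partial_{x_k}K_{ij}$, reducing the effective kernel to Calder\'{o}n--Zygmund order $|x-y|^{-2}$ acting bilinearly on $\nabla g_j$ and $f_i$. The Coifman--Meyer bilinear multiplier theorem \cite{Coifman1} then yields the desired $L^{r_3}$-bound $\lesssim \|\nabla g\|_{L^{r_1}}\|f\|_{L^{r_2}}$ whenever $1/r_3 = 1/r_1 + 1/r_2$.

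The main obstacle is the rigorous control of term (b): the bare differentiated kernel is non-integrable of order three, and the cancellation supplied by the divided difference of $g_j$ must be extracted in a Calder\'{o}n--Zygmund-compatible fashion, either through a paraproduct decomposition \emph{\`{a} la} \cite{Coifman1} or through the compensated-compactness apparatus of Coifman--Lions--Meyer--Semmes \cite{Coi} (with the $\mathcal{H}^1$--$\mathcal{BMO}$ duality). Once these classical bilinear inputs are accepted, both bounds assemble directly; a minor bookkeeping point on $\tor$ is the mean-value convention in defining $R_i$, which poses no real difficulty since the commutator structure preserves the relevant mean-zero conditions.
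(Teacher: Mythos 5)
Your proposal follows exactly the route the paper takes: the paper offers no proof of this lemma, merely asserting it ``in view of the results of Coifman--Rochberg--Weiss and Coifman--Meyer,'' and your reduction --- Coifman--Rochberg--Weiss for the $\mathcal{BMO}$-based $L^q$ bound, and the differentiated kernel representation with the leading term $\partial_k g_j\cdot R_iR_jf_i$ handled by H\"older plus Calder\'on--Zygmund boundedness and the remaining bilinear singular integral handled by the Coifman--Meyer theory --- is the standard way of extracting the stated estimates from those references. The difficulty you flag in controlling the differentiated-kernel term is real but is precisely what the cited Coifman--Meyer bilinear machinery is invoked to resolve, so your argument is at least as detailed as the paper's.
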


The following estimate due to B. Desjardins \cite[Lemma~2]{Desjardins} is one of the key tools for our paper. 
	\begin{lemma}
		\label{rhou}
		Suppose $\rho\geq 0$ and $u\in H^1(\mathbb{T}^2, \R^2)$ satisfy that $\sqrt{\rho}u\in L^2(\mathbb{T}^2, \R^2)$ and $\rho\in L^{\gamma}(\mathbb{T}^2)$ for some $\gamma>1$. Then, for any $q\in ]1,+\infty[$, there exists a constant $C=C(q,\gamma)$ such that the following estimate holds:
\begin{align*}
\left\|\rho^{\frac{1}{2q}}u \right\|_{L^{2q}}^q &\leq C\| \sqrt{\rho}u \|_{L^2}\| \nabla u \|_{L^2}^{q-1}\cdot \left\{ \log\left(2+\frac{\| \nabla u \|_{L^2}^2 \| \rho \|_{L^{\gamma}}}{\| \sqrt{\rho}u \|_{L^2}^2}\right)\right\}^{\frac{q-1}{2}}\\
 &\qquad +\| \sqrt{\rho}u \|_{L^2}\left|\int_{\mathbb{T}^2} u\,\mathrm{d} x\right|^{q-1}.   
\end{align*}
	\end{lemma}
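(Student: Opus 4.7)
The plan is to combine Trudinger's inequality (Lemma~\ref{Tru}) with a level-set decomposition, following B.~Desjardins' original strategy.

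Setting $w := u - \bar u$ with $\bar u := \int_{\tor}u\,\dd x$, Trudinger's inequality applied to the mean-zero field $w$ gives $\int_{\tor}\exp\bigl(|w|^2/(c_1\|\nabla u\|_{L^2}^2)\bigr)\,\dd x \leq c_2$. Expanding the exponential in Taylor series and invoking Stirling's formula yields the fundamental $L^p$-bound $\|w\|_{L^p} \leq C\sqrt{p}\,\|\nabla u\|_{L^2}$ for every $p \in [1,\infty)$, with $C$ independent of $p$. This plays the role of a quantitative Orlicz embedding and is the substitute for the failed endpoint $H^1\hookrightarrow L^\infty$ in two dimensions.

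For a cut-off level $M > 0$ to be chosen later, I would split
\begin{align*}
\int_{\tor}\rho|u|^{2q}\,\dd x = \int_{\{|u|\leq M\}}\rho|u|^{2q}\,\dd x + \int_{\{|u|>M\}}\rho|u|^{2q}\,\dd x.
\end{align*}
The low-level piece is controlled by $M^{2q-2}\|\sqrt\rho u\|_{L^2}^2$ via the pointwise bound $|u|^{2q}\mathbf{1}_{|u|\leq M}\leq M^{2q-2}|u|^2$. For the tail, H\"{o}lder's inequality with conjugate exponents $(\gamma,\gamma')$, where $\gamma'=\gamma/(\gamma-1)$, gives
\begin{align*}
\int_{\{|u|>M\}}\rho|u|^{2q}\,\dd x \leq \|\rho\|_{L^\gamma}\Bigl(\int_{\{|u|>M\}}|u|^{2q\gamma'}\,\dd x\Bigr)^{1/\gamma'}.
\end{align*}
In the regime $M \geq 2|\bar u|$ the set $\{|u|>M\}$ is contained in $\{|w|>M/2\}$. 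Combining an exponential Markov-type inequality $\mathbf{1}_{|w|>M/2} \leq \exp\bigl(\lambda(|w|^2-M^2/4)\bigr)$ with Cauchy--Schwarz and the $L^p$-bound above, choosing $\lambda$ just below the Trudinger threshold $1/(2c_1\|\nabla u\|_{L^2}^2)$, produces a bound of the form $C(q,\gamma)\|\nabla u\|_{L^2}^{2q}\exp\bigl(-M^2/(C'\|\nabla u\|_{L^2}^2)\bigr)$.

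Optimising $M$ by setting
\begin{align*}
M^2 = C_0(q,\gamma)\,\|\nabla u\|_{L^2}^2\,\log\Bigl(2 + \frac{\|\nabla u\|_{L^2}^2\,\|\rho\|_{L^\gamma}}{\|\sqrt\rho u\|_{L^2}^2}\Bigr)
\end{align*}
with $C_0(q,\gamma)$ sufficiently large to absorb the polynomial-in-$q$ factors balances the two contributions and produces the main-term bound $C(q,\gamma)\|\sqrt\rho u\|_{L^2}^2\|\nabla u\|_{L^2}^{2q-2}\log(\cdots)^{q-1}$ in the regime $M \geq 2|\bar u|$. In the complementary regime, $|\bar u|$ itself is so large that $|u|^{2q} \leq C(|\bar u|^{2q-2}|u|^2 + \text{lower-order})$ pointwise; integrating against $\rho$ and taking the square root then yields precisely the additive term $\|\sqrt\rho u\|_{L^2}|\bar u|^{q-1}$ on the right-hand side. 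The main obstacle will be the bookkeeping in the optimisation step: the $\sqrt p$ factor from Trudinger must be tracked accurately through the Cauchy--Schwarz bound, and the polynomial-in-$q$ prefactors absorbed into $C(q,\gamma)$ without corrupting the sharp logarithmic exponent $(q-1)/2$ that must emerge after taking the final square root.
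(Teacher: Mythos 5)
The paper does not prove this lemma; it is quoted directly from Desjardins \cite[Lemma~2]{Desjardins}, so there is no internal proof to compare against. Your reconstruction follows what is essentially Desjardins' own strategy (Trudinger exponential integrability for the mean-free part $w=u-\bar u$, a level-set splitting, and optimisation of the cut-off level), and the derivation of the main term is sound: the low-level piece gives $M^{2q-2}\|\sqrt\rho u\|_{L^2}^2$, the tail is killed by H\"older in $\rho$, Cauchy--Schwarz, the moment bound $\|w\|_{L^p}\leq C\sqrt p\,\|\nabla u\|_{L^2}$, and the Gaussian-type measure bound $|\{|w|>M/2\}|\leq c_2\exp\bigl(-M^2/(4c_1\|\nabla u\|_{L^2}^2)\bigr)$, and your choice of $M^2$ turns the exponential into a negative power of $2+\|\nabla u\|_{L^2}^2\|\rho\|_{L^\gamma}/\|\sqrt\rho u\|_{L^2}^2$ that cancels the prefactor $\|\rho\|_{L^\gamma}\|\nabla u\|_{L^2}^{2q}$ once $C_0$ is large enough. (One should also record that on $\{|u|>M\}$ with $M\geq 2|\bar u|$ one has $|u|\leq 2|w|$, so that the tail integral of $|u|^{2q\gamma'}$ really is controlled by moments of $w$; you use this implicitly.)

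The one step that does not survive scrutiny as written is the complementary regime $2|\bar u|>M$. The pointwise inequality there is $|u|^{2q}\leq C(q)\bigl(|\bar u|^{2q-2}+|w|^{2q-2}\bigr)|u|^2$, and the remainder $|w|^{2q-2}|u|^2$ is not ``lower-order'': it is precisely the term the whole level-set machinery exists to handle, so the regime cannot be closed by a pointwise bound alone. The repair is immediate, though: perform the splitting at the level $M_*:=\max\{M,2|\bar u|\}$ in all cases. Then the low part is bounded by $M_*^{2q-2}\|\sqrt\rho u\|_{L^2}^2\leq C(q)\bigl(M^{2q-2}+|\bar u|^{2q-2}\bigr)\|\sqrt\rho u\|_{L^2}^2$, which after taking the square root produces exactly the two terms of the lemma (with a constant $C(q)$ also on the second term --- any proof of this type yields such a constant, and the paper's applications absorb it anyway), while on $\{|u|>M_*\}$ one still has $|w|>M/2$ and $|u|\leq 2|w|$, so the tail estimate goes through verbatim. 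With this modification your argument is complete.
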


	\section{A priori estimates for $\log \|\na u\|_{L^2}$}\label{sec: log Y}
This section is devoted to the derivation of \emph{a priori} estimates for the classical solution $(\rho, u)$ for the barotropic Navier--Stokes system~\eqref{equ} on $]0,\infty[ \times \tor$, whose existence is guaranteed by Theorem~\ref{thm: VK} (Va\u{\i}gant--Kazhikhov \cite{vaigant}). Our main result here is Proposition~\ref{propn: logY} below, in which we bound $\log \|\na u\|_{L^2}$ by some power of $\hatrho$, where $\hatrho(t) := \sup_{x \in \tor,\, 0 \leq \tau \leq t} \rho(\tau,x)$ as before.

It is crucial to note that all the estimates in the current section remain valid for \emph{weak} solutions when $\beta>3$ and $\gamma>1$ (Perepelitsa \cite[Section~6]{perep}). This observation is generalised to $\beta>3/2$ and $1<\gamma<4\beta-3$ (Huang--Li \cite{H1}), and can be justified in the general case $\beta>1$ and $\gamma>1$ by \S\ref{sec: final} in this paper. 

First of all, for any $t>0$ we have the conservation of mass and momentum, which follows direction from Equation~\eqref{equ}:
	\begin{equation}
		\label{mean}
		\int_{\mathbb{T}^2} \rho(t,x)\,\mathrm{d}x=\int_{\mathbb{T}^2} \rho_0(x)\,\mathrm{d}x
		\quad \text{and}\quad 
		\int_{\mathbb{T}^2} (\rho u)(t,x)\,\mathrm{d}x=\int_{\mathbb{T}^2} (\rho_0 u_0)(x)\,\mathrm{d}x.
	\end{equation}
 Without loss of generality, from now on assume that
 \begin{equation*}
     \int_{\mathbb{T}^2} \rho_0(x)\,\mathrm{d}x=1\quad\text{and}\quad \int_{\mathbb{T}^2} (\rho_0 u_0)(x)\,\mathrm{d}x=0.
 \end{equation*}

As in Serre \cite{serre1, serre2}, Hoff \cite{hoff1, hoff2, hoff-1d}, Lions \cite{lions}, and Feireisl--Novotn\'{y}--Petzeltova \cite{fnp}, etc., we introduce the important quantity $B$, known as the \emph{effective viscous flux}:	\begin{equation}
		\label{defB}
		B(t,x):=\big(\lambda(t,x)+2\mu \big)\mathrm{div} u(t,x) -\big(P(t,x)-\bar{P}(t)\big).
	\end{equation}
Clearly, by Equation~\eqref{equ} it solves the Poisson equation:
	$$\Delta B=\partial_t \big(\mathrm{div}(\rho u)\big)+\mathrm{div}\mathrm{div} (\rho u\otimes u).$$

 In addition, following Desjardins \cite{des2} we introduce 
	\begin{equation}
		\label{F}
		F:=2\mu \log\rho +\frac{\rho^{\beta}}{\beta}-(-\Delta)^{-1} \mathrm{div}(\rho u).
	\end{equation}
The conservation of momentum then becomes
	\begin{equation}
		\label{eq1}
		\begin{aligned}
			\frac{\mathrm{D}}{\mathrm{D} t}F +P-\bar{P} &= -\bar{B}-\bigg\{u\cdot (-\Delta)^{-\frac{1}{2}}\nabla(-\Delta)^{-\frac{1}{2}}\mathrm{div}(\rho u)\\
			&\qquad- (-\Delta)^{-\frac{1}{2}}\mathrm{div} (-\Delta)^{-\frac{1}{2}}\mathrm{div} [\rho u\otimes u]\bigg\}.
		\end{aligned}	
	\end{equation}
The terms in the parenthesis  on the right-hand side of Equation~\eqref{eq1}, thanks to the  important observation due to P.~L. Lions \cite{lions}, can be written as a sum of commutators of Riesz transforms and the multiplication operators by $u$:
	\begin{align}
		\label{ui}
		u\cdot (-\Delta)^{-\frac{1}{2}}\nabla(-\Delta)^{-\frac{1}{2}}\mathrm{div}(\rho u) &-(-\Delta)^{-\frac{1}{2}}\mathrm{div} (-\Delta)^{-\frac{1}{2}}\mathrm{div} [\rho u\otimes u]\nonumber\\
  &=\big[u_i,(-\Delta)^{-1}\partial_i\partial_j\big](\rho u_j) =: G.
	\end{align}
 Here and hereafter, Einstein's summation convention is adopted.

Following Perepelitsa \cite{perep} we introduce the following integral quantities:
 \begin{eqnarray}
    &&D^2(t) :=\int_{\mathbb{T}^2} \bigg\{(\lambda(t,x)+2\mu)|\mathrm{div} u(t,x)|^2+\mu|\mathrm{rot} u(t,x)|^2\bigg\} \,\mathrm{d}x,\label{D, def}\\
    && Y^2(t) :=\int_{\mathbb{T}^2} \left\{\mu|\mathrm{rot} u(t,x)|^2+\frac{B^2(t,x)}{\lambda(t,x)+2\mu}\right\}\,\mathrm{d}x,\label{Y, def}\\
&&X^2(t)=\int_{\mathbb{T}^2} \left\{\frac{\big|\nabla B(t,x)+\mu\nabla^{\bot} \mathrm{rot} u(t,x)\big|^2}{\rho(t,x)}\right\}\,\mathrm{d}x. \label{X, def}   
 \end{eqnarray}
 Denote also 
 \begin{equation*}
\tilde{\rho}(t):=\|\rho(t,\cdot)\|_{L^{\infty}}\quad \text{and} \quad \tilde{\rho_0}=\|\rho_0(\cdot)\|_{L^{\infty}},
 \end{equation*}
as well as 
\begin{equation*}
    \hatrho(t):= \sup_{\tau \in [0,t]} \trho(\tau).
\end{equation*}
In addition, from Equation~\eqref{mean} one directly infers that 
	\begin{equation}
		\label{suprho}
		\tilde{\rho}(t) \geq \int_{\mathbb{T}^2} \rho(t,\cdot) \,\mathrm{d}x=1.
	\end{equation}
This observation will be used throughout the subsequent developments.

The energy inequality is standard \cite{perep}. We sketch a proof for the sake of completeness.
\begin{lemma}[Energy inequality] 
\label{lemma: energy ineq}
There exists a uniform constant $C=C(\gamma)>0$ such that
\begin{equation*}
\begin{aligned}
&\sup\limits_{t>0} \int_{\mathbb{T}^2} \left\{\rho(t,x)\frac{|u(t,x)|^2}{2} +\frac{P(t,x)}{\gamma-1}\right\}\,\mathrm{d}x \\ &\qquad+\iint\limits_{]0,+\infty[\times \mathbb{T}^2} \bigg\{\big(\lambda(t,x)+2\mu\big)|\mathrm{div} u(t,x)|^2+\mu|\mathrm{rot} u(t,x)|^2\bigg\}\,\mathrm{d}x\,\mathrm{d}t\leq C\,E_0,
\end{aligned}
\end{equation*}
where
\begin{align*}
    E_0:=\int_{\mathbb{T}^2} \left\{\rho_0(x)\frac{|u_0(x)|^2}{2} +\frac{P\big(\rho_0(x)\big)}{\gamma-1}\right\}\,\mathrm{d}x.
\end{align*}
\end{lemma}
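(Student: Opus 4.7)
The plan is to test the momentum equation of \eqref{equ} against the velocity $u$ and integrate over $\tor$. First, the combination $u\cdot\big[(\rho u)_t + \dv(\rho u\otimes u)\big]$, together with the continuity equation, reduces algebraically to $\p_t(\rho|u|^2/2) + \dv(\rho u|u|^2/2)$; the divergence piece integrates to zero on the closed manifold $\tor$, leaving $\frac{\dd}{\dd t}\int_\tor \rho|u|^2/2\,\dd x$ on the left-hand side.

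Next I would integrate the viscous contribution by parts. Since $\mathcal{D} = \na\big((\lambda+\mu)\divu\big) + \mu\Delta u$ and no spatial derivative falls on the coefficient $\lambda$, one obtains directly
$$\int_\tor u\cdot \mathcal{D}\,\dd x = -\int_\tor \Big[(\lambda+\mu)(\divu)^2 + \mu|\na u|^2\Big]\,\dd x.$$
The Helmholtz-type identity $\|\na u\|_{L^2}^2 = \|\divu\|_{L^2}^2 + \|\rotu\|_{L^2}^2$, which holds on $\tor$ by an additional integration by parts (and is the $L^2$ instance of Lemma~\ref{lemma1}), recasts the above as $-\int_\tor\big[(\lambda+2\mu)|\divu|^2 + \mu|\rotu|^2\big]\,\dd x$, reproducing precisely the dissipation functional in the statement.

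For the pressure term I would invoke the renormalised continuity equation with $f(\rho) := \rho^\gamma/(\gamma-1)$: since $\rho f'(\rho) - f(\rho) = \rho^\gamma = P$, the continuity equation yields
$$\p_t\!\left(\frac{\rho^\gamma}{\gamma-1}\right) + \dv\!\left(\frac{\rho^\gamma u}{\gamma-1}\right) + P\,\divu = 0,$$
which, after integration over $\tor$, supplies $\int_\tor P\,\divu\,\dd x = -\frac{\dd}{\dd t}\int_\tor \rho^\gamma/(\gamma-1)\,\dd x$ and thus exactly balances $-\int_\tor u\cdot \na P\,\dd x = \int_\tor P\,\divu\,\dd x$. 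Assembling the three pieces produces the pointwise-in-time \emph{equality}
$$\frac{\dd}{\dd t}\int_\tor \left\{\frac{\rho|u|^2}{2} + \frac{\rho^\gamma}{\gamma-1}\right\}\dd x + \int_\tor \Big\{(\lambda+2\mu)|\divu|^2 + \mu|\rotu|^2\Big\}\dd x = 0,$$
from which integration in $t$ followed by taking the supremum over $t>0$ yields the claim with $C(\gamma)$ admissibly taken to be $1$.

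No genuine obstacle arises at the level of the classical or strong solutions of Theorem~\ref{thm: VK}, since every manipulation above is rigorously justified. For weak solutions, the sharp equality must be weakened to the stated inequality, obtained via approximation by smooth solutions together with the weak lower semicontinuity of the convex quadratic dissipation functional---which is precisely why the lemma is phrased with ``$\leq$'' in place of ``$=$''.
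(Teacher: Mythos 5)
Your proof is correct and follows essentially the same route as the paper: test the momentum equation against $u$, integrate by parts (the paper rewrites $\Delta u=\na\,\divu+\na^{\bot}\rotu$ before integrating, whereas you integrate first and then invoke $\|\na u\|_{L^2}^2=\|\divu\|_{L^2}^2+\|\rotu\|_{L^2}^2$ on $\tor$ --- these are equivalent), and treat the pressure via the renormalised continuity equation, which is exactly the paper's identity $-(\gamma-1)P\,\divu=\p_t P+\dv(Pu)$. The only nitpick is the constant: since the supremum of the energy and the total dissipation are each bounded by $E_0$, their \emph{sum} requires $C=2$ rather than $C=1$, which is immaterial for the statement.
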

	\begin{proof}[Sketched proof of Lemma~\ref{lemma: energy ineq}]
		In view of $$\Delta u=\nabla \mathrm{div} u+\nabla^{\bot}\mathrm{rot} u,$$ one may rewrite the equation of conservation of momentum as 
		\begin{equation*}
			\rho \p_t u +\rho u\cdot\nabla u+\nabla P=\nabla\big(\lambda(\rho) +2\mu) \mathrm{div} u\big)+\mu \nabla^{\bot} \mathrm{rot} u.
		\end{equation*}
Hence, via integration by parts, we have that
		\begin{equation*}
			\frac{\mathrm{d}}{\mathrm{d}t}\int_{\mathbb{T}^2}  \rho\frac{|u|^2}{2}\,\dd x +
		\int_{\mathbb{T}^2} 	\bigg\{ \big((\lambda(\rho)+2\mu)\mathrm{div} u-P\big)\mathrm{div} u +\mu |\mathrm{rot} u|^2\bigg\}\,\mathrm{d}x=0.
		\end{equation*}

On the other hand, as $P=\rho^{\gamma}$ with $\gamma>1$, we deduce from the continuity equation that $$-(\gamma-1)P \mathrm{div} u=\p_t P +\mathrm{div}(Pu).$$ Thus
		\begin{equation}\label{x, June25}
			\frac{\mathrm{d}}{\mathrm{d}t}\int_{\mathbb{T}^2} \frac{P}{\gamma-1}\,\mathrm{d}x =-\int_{\mathbb{T}^2} P \mathrm{div} u\,\mathrm{d}x.
		\end{equation}
We conclude the proof from the previous two identities.  \end{proof}

Let us also observe the following bound:
\begin{equation}
\label{uu}  		\left|\int_{\mathbb{T}^2} u\,\mathrm{d}x\right|\leq C(\gamma, E_0)\|\nabla u\|_{L^2}. \end{equation}
Indeed, from H\"{o}lder and Sobolev inequalities and Lemma~\ref{lemma: energy ineq} we deduce that 
    $$\left|\int_{\mathbb{T}^2} (u-\bar{u})\rho\,\mathrm{d}x\right|\leq \|\rho\|_{L^{\gamma}}\|u-\bar{u}\|_{L^{\frac{\gamma}{\gamma-1}}}\leq C\|\nabla u\|_{L^2}.$$
On the other hand, thanks to the triangle inequality and Equation~\eqref{mean} (together with the ensuing normalisation condition), we have that
\begin{align*}
\left|\int_{\mathbb{T}^2}(u-\bar{u})\rho\,\mathrm{d}x \right| &\geq \left|\big|\bar{u}\big|\int_{\mathbb{T}^2}\rho_0\,\mathrm{d}x-\big|\int_{\mathbb{T}^2}\rho u\,\mathrm{d}x \big| \right|\\
&=\left|\big|\bar{u}\big|\int_{\mathbb{T}^2}\rho_0\,\mathrm{d}x-\big|\int_{\mathbb{T}^2}\rho_0 u_0\,\mathrm{d}x \big| \right| = \big|\bar{u}\big|.
\end{align*}
One thus obtains Equation~\eqref{uu}.

We shall need to bound $\|\na u\|_{L^4}$ and $\|\nabla u\|_{L^{2+\alpha}}$ in later developments (see, for instance, Proposition~\ref{propn: boundG}). A more general estimate is established below, in which we bound $\|\na u\|_{L^q}$ for any $q>2$ by powers of $\trho(t) \equiv \|\rho(t,\cdot)\|_\linf$, $D(t)$, and $X(t)$.

\begin{lemma}\label{lem: Lp for nabla u}
Let $q>2$ and $0<\varepsilon<1$ be arbitrary. Let $u$ be a strong solution to the Navier--Stokes Equations~\eqref{equ}. There exists a positive constant $C=C(\mu,\gamma, q,\varepsilon, E_0)$ such that
		\begin{equation*}
			\begin{aligned}
				\|\nabla u\|_{L^q}&\leq C \tilde{\rho} ^{\frac{\beta\varepsilon}{2}+\max\left\{0,\frac{\gamma-\beta}{2}\right\}\cdot\frac{2}{q}+\max\left\{0,\frac{\beta-\gamma}{2}\right\}\cdot(1-\frac{2}{q})}(t)\big[1+D(t)\big]\\
	  &\qquad+ C\tilde{\rho} ^{\frac{\beta\varepsilon}{2}+\frac{1}{2}-\frac{1}{q}+\max\left\{0,\frac{\gamma-\beta}{2}\right\}}(t)\big[1+D(t)\big]\left(\frac{X^2(t)}{10+Y^2(t)}\right)^{\frac{1}{2}-\frac{1}{q}}\\
      &\qquad + C \tilde{\rho}^{\max\left\{0,\frac{q-1}{q}\gamma-\beta\right\}}(t).			
			\end{aligned}
		\end{equation*}
  Here $D$ and $X$ are as in Equations~\eqref{D, def} and \eqref{X, def}.
	\end{lemma}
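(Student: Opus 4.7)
The plan is to reduce $\|\nabla u\|_{L^q}$ to bounds on $\mathrm{div}\,u$ and $\mathrm{rot}\,u$ via the div-curl estimate (Lemma~\ref{lemma1}), and then to apply Gagliardo--Nirenberg interpolation (Lemma~\ref{1}) to each piece, using the effective-viscous-flux identity $(\lambda+2\mu)\mathrm{div}\,u = B + (P-\bar P)$ together with elliptic bounds extracted from the momentum equation. The three terms in the claimed inequality should then correspond to (i) the ``$L^2$ portion'' of the interpolation, (ii) the ``higher-order portion'' involving the material derivative $\dot u$ (carrying the $X$-factor), and (iii) the direct contribution of the pressure-to-viscosity ratio.

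Concretely, I would first split $\mathrm{div}\,u = B/(\lambda+2\mu) + (P-\bar P)/(\lambda+2\mu)$. For the pressure piece, the pointwise bound $P/(\lambda+2\mu)\leq C(1+\rho^{\gamma-\beta})$ (valid when $\gamma>\beta$, using $\lambda+2\mu\geq \rho^\beta$) combined with the energy bound $\int\rho^\gamma\leq CE_0$ should yield $\|(P-\bar P)/(\lambda+2\mu)\|_{L^q}\leq C(1+\trho^{\max\{0,(q-1)\gamma/q-\beta\}})$, which is the third term of the lemma. Next I would rewrite the momentum equation as $\rho\dot u = \nabla B + \mu\nabla^{\bot}\mathrm{rot}\,u$; taking divergence and curl of this identity gives $\Delta B=\mathrm{div}(\rho\dot u)$ and $\mu\Delta\mathrm{rot}\,u = \mathrm{rot}(\rho\dot u)$, so that Calder\'on--Zygmund estimates produce $\|\nabla B\|_{L^2}+\|\mu\nabla\mathrm{rot}\,u\|_{L^2}\leq C\|\rho\dot u\|_{L^2}\leq C\sqrt{\trho}\,X$ via $\int\rho^2|\dot u|^2\leq\trho\int\rho|\dot u|^2 = \trho X^2$.

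To treat the $B$-piece of $\mathrm{div}\,u$ via Lemma~\ref{1}, I would introduce the weighted auxiliary $\psi := B/(\lambda+2\mu)^{(1-\varepsilon)/2}$, whose $L^2$-norm squared satisfies
\begin{align*}
\|\psi\|_{L^2}^2 = \int \frac{B^2}{\lambda+2\mu}\cdot(\lambda+2\mu)^\varepsilon\,\mathrm{d}x \leq \|\lambda+2\mu\|_{L^\infty}^\varepsilon Y^2 \leq C\trho^{\beta\varepsilon}Y^2;
\end{align*}
this is the origin of the $\trho^{\beta\varepsilon/2}$ factor in the statement. To control $\|\psi\|_{L^q}$ without direct access to $\nabla\rho$, I would apply Lemma~\ref{1} to $B-\bar B$ itself (whose $L^2$-norm is controlled via the $\psi$-weighted bound and whose gradient is controlled by the elliptic estimate above), and then convert to $B/(\lambda+2\mu)$ by multiplying pointwise by $(\lambda+2\mu)^{-(1+\varepsilon)/2}\leq (2\mu)^{-(1+\varepsilon)/2}$. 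Similarly, Lemma~\ref{1} applied to $\mathrm{rot}\,u$ with $\|\mathrm{rot}\,u\|_{L^2}\leq Y/\sqrt\mu$ and the curl-derivative bound produces a matching estimate. To bring the result into the stated form, I would invoke the algebraic identity
\begin{align*}
Y^{2/q}X^{1-2/q}\leq (Y^2+10)^{1/q}X^{1-2/q} = (Y^2+10)^{1/2}\left(\frac{X^2}{Y^2+10}\right)^{1/2-1/q},
\end{align*}
together with the bound $(Y^2+10)^{1/2}\leq C\trho^{\max\{0,(\gamma-\beta)/2\}}(1+D)$. The latter is obtained by expanding $B$ inside $\int B^2/(\lambda+2\mu)$, estimating the cross term $\int\mathrm{div}\,u(P-\bar P)$ via a weighted Cauchy--Schwarz, and absorbing through Young's inequality into $D^2$ and $\trho^{\max\{0,\gamma-\beta\}}$.

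The main obstacle is the absence of direct control on $\nabla\rho$, which prevents a naive Gagliardo--Nirenberg treatment of the weighted quantity $B/(\lambda+2\mu)^\alpha$ (whose pointwise gradient spuriously involves $\nabla(\lambda+2\mu)=\beta\rho^{\beta-1}\nabla\rho$). The workaround through the $\psi$-weighting costs the small factor $\trho^{\beta\varepsilon/2}$, harmless since $\varepsilon\in(0,1)$ is arbitrary. The additional exponent $\max\{0,(\beta-\gamma)/2\}(1-2/q)$ in the first term arises from a parallel weighted interpolation in which $(\lambda+2\mu)^{1/2}\mathrm{div}\,u$ (bounded in $L^2$ by $D$) is mixed through H\"older against the pressure-viscosity ratio, so that the weights $\rho^{\beta/2}$ and $\rho^{\gamma/2}$ combine convexly under the Gagliardo--Nirenberg exponent $1-2/q$. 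Tracking these weighted exponents consistently through the several interpolations --- while ensuring compatibility between the $B$-estimate, the pressure estimate, and the rotation estimate --- is the most delicate bookkeeping of the proof.
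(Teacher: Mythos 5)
Your proposal is correct and follows essentially the same route as the paper's proof: div-curl reduction, the splitting $\mathrm{div}\,u = B/(\lambda+2\mu) + (P-\bar P)/(\lambda+2\mu)$ with the pressure piece handled by the energy bound, a weighted Gagliardo--Nirenberg interpolation between the $Y$-controlled weighted $L^2$ norm of $B$ and its $H^1$ norm (itself bounded by $\trho^{1/2}X + \trho^{\max\{0,(\beta-\gamma)/2\}}D$), and the algebraic identity $Y^{2/q}X^{1-2/q}\le (10+Y^2)^{1/2}\left(X^2/(10+Y^2)\right)^{1/2-1/q}$ combined with $Y\lesssim \trho^{\max\{0,(\gamma-\beta)/2\}}+D$. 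The only cosmetic difference is that you package the $\varepsilon$-loss through the auxiliary weight $\psi=B/(\lambda+2\mu)^{(1-\varepsilon)/2}$, whereas the paper uses a H\"older split with exponent $2/q-\varepsilon$ on $\|B/(\lambda+2\mu)\|_{L^2}$ followed by Gagliardo--Nirenberg on $B$; both devices produce the same $\trho^{\beta\varepsilon/2}$ factor.
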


\begin{proof}[Proof of Lemma~\ref{lem: Lp for nabla u}]
By the div-curl estimate in Lemma~\ref{lemma1}, for any $q\in ]1,\infty[$ we have that
		$$\|\nabla B\|_{L^q} +\|\nabla \mathrm{rot} u\|_{L^q} \leq C(q,\mu) \left\|\nabla B +\mu\nabla^{\bot}\mathrm{rot} u\right\|_{L^q}.$$
		In particular, taking $q=2$ leads to 
  \begin{equation}\label{xx}
      \|\nabla B\|_{L^2} +\|\nabla \mathrm{rot} u\|_{L^2} \leq  C(\mu)\tilde{\rho}^{\frac{1}{2}}(t)X(t),
  \end{equation}
thanks to the definition of $X$ in \eqref{X, def}.

Now, by the triangle and Poincar\'{e} inequalities, it holds that
\begin{align*}
    \|B\|_{L^2} \leq \left|\bar{B}\right| + \left|B-\bar{B}\right| \leq \left|\bar{B}\right| + C\|\na B\|_{L^2},
\end{align*}
where $C$ is a universal constant. Hence,
\begin{align}
\label{BH1}    
\|B\|_{H^1} +\|\mathrm{rot} u\|_{H^1}\leq& C(\|\nabla B\|_{L^2} +\|\nabla \mathrm{rot} u\|_{L^2}) +\left|\int_{\mathbb{T}^2} B\,\mathrm{d}x\right| +\left|\int_{\mathbb{T}^2} \mathrm{rot} u\,\mathrm{d}x\right|\nonumber\\
\leq &C(\|\nabla B\|_{L^2} +\|\nabla \mathrm{rot} u\|_{L^2})+\left\{\int_{\mathbb{T}^2} (\lambda+2\mu )\,\mathrm{d}x\right\}^{\frac{1}{2}}\left\{\int_{\mathbb{T}^2} (\lambda+2\mu)|\mathrm{div}u|^2 \,\mathrm{d}x\right\}^{\frac{1}{2}}\nonumber\\
\leq &C\tilde{\rho}^{\frac{1}{2}}(t)X(t)+ \left(\|\rho\|_{L^{\beta}}^{\frac{\beta}{2}}+\sqrt{2\mu}\right)D(t)\nonumber\\
\leq &C(\gamma,\mu, E_0)\left\{\tilde{\rho}^{\frac{1}{2}}(t)X(t)+ \tilde{\rho}^{\max\left\{0,\frac{\beta-\gamma}{2}\right\}}(t)D(t)\right\}.
\end{align} 
In the second line we used the definition of $B$, the identities $\int_{\tor}{\rm rot} u\,\dd x = \int_\tor \left(P-\bar{P}\right)\,\dd x =0$, and the Cauchy--Schwarz inequality; in the third line we used the bound~\eqref{xx}, the definition of $D$ in \eqref{D, def}, and the assumption $\lambda = \rho^\beta$; and, in the final line, we invoked the bound $\int_\tor \rho^\gamma\,\dd x \leq C(\gamma, E_0)$ via Lemma~\ref{lemma: energy ineq} and $\trho \geq 1$ as in \eqref{suprho}.

Now we are ready to estimate $\|\na u\|_{L^q}$. By the div-curl estimate in Lemma~\ref{lemma1}, we have
\begin{align*}
    \|\nabla u\|_{L^q}\leq C(q)\left(\|\mathrm{div} u\|_{L^q}+\|\mathrm{rot} u\|_{L^q}\right)
\end{align*}
for any $q>1$. To control the right-hand side, we note by the definition of $B$ in \eqref{defB} that 
\begin{align*}
   \|\mathrm{div} u\|_{L^q} \leq  \left\|\frac{B}{\lambda+2\mu}\right\|_{L^q} +\left\|\frac{P-\bar{P}}{\lambda+2\mu}\right\|_{L^q}
\end{align*}
The right-hand side is controlled as follows:
\begin{itemize}
    \item 
    The first term $ \left\|\frac{B}{\lambda+2\mu}\right\|_{L^q} $ is controlled via the H\"{o}lder inequality, the definition of $Y$ in \eqref{Y, def}, and the Gagliardo--Nirenberg--Sobolev interpolation inequality as follows, for any $q>2$ and $\e>0$:
\begin{align*}
 \left\|\frac{B}{\lambda+2\mu}\right\|_{L^q}   &\leq \left\|\frac{B}{\lambda+2\mu}\right\|_{L^2}^{\frac{2}{q}-\varepsilon}\left\|B\right\|_{L^{\frac{2(1+\varepsilon)q-4}{q\varepsilon}}}^{1-\frac{2}{q}+\varepsilon}\\
 &\leq Y^{\frac{2}{q}-\varepsilon}(t)\|B\|_{L^2}^{\varepsilon}\|B\|_{H^1}^{1-\frac{2}{q}}.
\end{align*}
\item 
For the second term $\left\|\frac{P-\bar{P}}{\lambda+2\mu}\right\|_{L^q}$, let us bound for any $q>1$ that
\begin{align}\label{NOV24, to use}
   \left\|\frac{P-\bar{P}}{\lambda+2\mu}\right\|_{L^q} &\leq C \left\{\int_\tor \rho^{(\gamma-\beta)q}\,\dd x\right\}^{\frac{1}{q}}\nonumber\\
   &\leq C\left\{\int_\tor \rho^\gamma\,\dd x\right\}^{\frac{1}{q}} \tilde{\rho}^{\max\left\{0,\frac{q-1}{q}\gamma-\beta\right\}}(t)\nonumber\\
   &\leq C(q, E_0)\tilde{\rho}^{\max\left\{0,\frac{q-1}{q}\gamma-\beta\right\}}(t),
\end{align}
where we use the energy inequality in Lemma~\ref{lemma: energy ineq} to control $\int_\tor \rho^\gamma\,\dd x \leq C(\gamma, E_0)$. 
\end{itemize}

The estimate for ${\rm rot}u$ is simpler: we have
\begin{align*}
    \|{\rm rot} u\|_{L^q} &\leq \|\mathrm{rot} u\|_{L^2}^{\frac{2}{q}}\|\mathrm{rot} u\|_{H^1}^{1-\frac{2}{q}} \\
    &\leq Y^{\frac{2}{q}}(t) \|\mathrm{rot} u\|_{H^1}^{1-\frac{2}{q}},
\end{align*}
thanks to the Gagliardo--Nirenberg--Sobolev inequality and the definition of $Y$ in \eqref{Y, def}.

Putting together the above estimates for ${\rm div} u$ and ${\rm rot} u$, we arrive at
\begin{align*}
    \|\na u\|_{L^q} \leq C(q,\varepsilon,E_0) \left\{ Y^{\frac{2}{q}-\varepsilon}(t)\|B\|_{L^2}^{\varepsilon}\|B\|_{H^1}^{1-\frac{2}{q}} +Y^{\frac{2}{q}}(t)\|\mathrm{rot} u\|_{H^1}^{1-\frac{2}{q}} +\tilde{\rho}^{\max\left\{0,\frac{q-1}{q}\gamma-\beta\right\}}(t)\right\}.
\end{align*}
In addition, by the definition of $Y$ in \eqref{Y, def} and the assumption~\eqref{assumption}, one has  $$\frac{\|B\|_{L^2}}{Y} \leq \sqrt{\|\lambda\|_{\linf}} \leq \tilde{\rho}^{\frac{\beta}{2}}(t).$$ Thus
\begin{align}\label{yy}
    \|\na u\|_{L^q} \leq C(q,\varepsilon,E_0)\left\{ \tilde{\rho} ^{\frac{\beta\varepsilon}{2}}(t)Y^{\frac{2}{q}} \left( \|B\|_{H^1} +\|\mathrm{rot} u\|_{H^1} \right)^{1-\frac{2}{q}} +\tilde{\rho}^{\max\left\{0,\frac{q-1}{q}\gamma-\beta\right\}}(t)\right\}.
\end{align}

To proceed, by virtue of the definitions of $Y$ and $D$ in \eqref{Y, def} and \eqref{D, def} respectively, Young's inequality, the assumption~\eqref{assumption}, and the energy inequality in Lemma~\ref{lemma: energy ineq}, observe the following:
\begin{align*}
    Y^2(t)-D^2(t) &= \int_\tor \left\{ \frac{B^2}{\lambda+2\mu} - (\lambda+2\mu)|{\rm div} u|^2 \right\}\,\dd x\\
    &\leq \int_\tor \left\{(\lambda+2\mu)|{\rm div} u|^2  + 2\frac{\left|P-\bar{P}\right|^2}{\lambda+2\mu}\right\}\,\dd x\\
    &\leq D^2(t) + C\int_\tor \rho^{2\gamma-\beta}\,\dd x\\
    &\leq D^2(t) + C(E_0) \cdot\tilde{\rho}^{\max\left\{0,\gamma-\beta\right\}}(t).
\end{align*}
This implies that
\begin{equation}
\label{DY2}
	Y(t)\leq C(E_0) \cdot \left(\tilde{\rho}^{\max\left\{0,\frac{\gamma-\beta}{2}\right\}}(t)+D(t)\right).
 \end{equation}

To conclude, putting together the estimates in \eqref{BH1}, \eqref{yy}, and \eqref{DY2},  we obtain that
\begin{align*}
      \|\na u\|_{L^q} &\leq C\trho^{\frac{\beta\e}{2}}(t) \cdot\trho^{\max\left\{0,\frac{\gamma-\beta}{2}\right\}\cdot{\frac{2}{q}}}(t) Y^{\frac{2}{q}}\left\{ \trho^{\frac{1}{2}}(t)X + \left(1+\trho^{\max\left\{0, \frac{\beta-\gamma}{2}\right\}}\right)D \right\}^{1-\frac{2}{q}} \nonumber\\
	&\qquad + C \trho^{\max\left\{0,\frac{q-1}{q}\gamma-\beta\right\}}(t)\\
		&\leq C \tilde{\rho} ^{\frac{\beta\varepsilon}{2}+\max\left\{0,\frac{\gamma-\beta}{2}\right\}\cdot\frac{2}{q}+\max\left\{0,\frac{\beta-\gamma}{2}\right\}\cdot(1-\frac{2}{q})}(t)\big[1+D(t)\big]\\
	  &\qquad+ C\tilde{\rho} ^{\frac{\beta\varepsilon}{2}+\frac{1}{2}-\frac{1}{q}+\max\left\{0,\frac{\gamma-\beta}{2}\right\}}(t)\big[1+D(t)\big]\left(\frac{X^2}{10+Y^2}\right)^{\frac{1}{2}-\frac{1}{q}}\\
      &\qquad + C \tilde{\rho}^{\max\left\{0,\frac{q-1}{q}\gamma-\beta\right\}}(t)
 \end{align*}
with constants $C=C(\gamma,\mu,\e,q,E_0)$. From here, the desired inequality can be deduced by noting that $(a+b)^{1-\frac{2}{q}} \leq a^{1-\frac{2}{q}}+b^{1-\frac{2}{q}}$ for $q>2$; $a,b \geq 0$, and that $\trho \geq 1$ by \eqref{suprho}.  \end{proof}

\begin{remark}
    \label{nablau, L2}
    In particular, for $q=2$ we have that
    \begin{equation*}
     \begin{aligned}
        \|\nabla u\|_{L^2}&\leq \|\mathrm{div}u\|_{L^2}+\|\mathrm{rot}u\|_{L^2}\\
        &\leq C\left(\left\|\frac{B}{\lambda+2\mu}\right\|_{L^2}+\left\|\frac{P-\bar{P}}{\lambda+2\mu}\right\|_{L^2}+\|\mathrm{rot}u\|_{L^2}\right)\\
        &\leq C(E_0) \left(Y(t)+\tilde{\rho}^{\max\left\{0,\frac{1}{2}\gamma-\beta \right\}}(t)\right).
     \end{aligned}
    \end{equation*}
Here, we used the definition of $Y$ in \eqref{Y, def} and the case $q=2$ in \eqref{NOV24, to use}.
\end{remark}


Now, we are in the situation of introducing the main estimate of this section.
 
\begin{proposition}
\label{propn: logY}
Let $0<\varepsilon<1$ and $\beta>1$ be arbitrary. Let $u$ be a strong solution to the Navier--Stokes Equations~\eqref{equ}. There exists a constant $C=C(\varepsilon, M, \mu, \beta, \gamma, E_0)$ such that
\begin{equation*}
    \log\Big(10 + \|\na u\|^2_{L^2} \Big)\leq C\hatrho^{\,\varsigma},
\end{equation*}
where
\begin{align*}
    \varsigma:={1+\beta\varepsilon+\max\left\{0, \gamma-2\beta,\beta-\gamma-2\right\}}.
\end{align*}
  Recall here that $\hatrho(t) := \sup_{[0,t] \times \tor}\rho$ and $M$ is the essential supremum of the initial density $\rho_0$.
\end{proposition}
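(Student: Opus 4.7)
The plan is first to reduce the desired estimate to a bound on $Y^2$. By Remark~\ref{nablau, L2}, one has $\|\nabla u\|_{L^2}\leq C(E_0)\bigl(Y(t)+\hatrho^{\max\{0,\gamma/2-\beta\}}(t)\bigr)$, and since $\hatrho(t)\geq 1$ by \eqref{suprho}, it suffices to prove $\log\bigl(10+Y^2(t)\bigr)\leq C\hatrho^{\varsigma}(t)$.

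To this end, I would derive a differential inequality for $Y^2$. From the momentum equation~\eqref{equ}, one observes the Hodge-type identity $\nabla B+\mu\nabla^\perp\omega=\rho\dot u$ with $\omega:=\mathrm{rot}\,u$, so that $X^2(t)=\int_{\tor}\rho|\dot u|^2\,\dd x$ is the natural dissipation. Using this together with the vorticity equation (obtained by applying $\mathrm{rot}$ to the momentum balance) and the continuity equation to handle the time derivative of the weight $(\lambda+2\mu)^{-1}$ in the definition~\eqref{Y, def} of $Y^2$, one arrives at the key identity
$$\frac{d}{dt}Y^2(t)+\kappa\,X^2(t)=\mathcal{N}(t),$$
for some $\kappa=\kappa(\mu)>0$, where $\mathcal{N}(t)$ gathers nonlinear terms of schematic form $\int(\nabla u)\cdot(B\text{ or }\omega)\cdot(\text{factors of }\rho,P)\,\dd x$ coming from the commutator between the Eulerian and material time derivatives and the transport of the weight.

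The core task is then to bound $\mathcal{N}(t)$ so that logarithmic Gronwall applies. Each constituent of $\mathcal{N}$ admits a H\"older estimate in which $\|\nabla u\|_{L^q}$ appears for some $q>2$ close to $2$. Applying Lemma~\ref{lem: Lp for nabla u} with a small $\varepsilon>0$, the resulting bound carries the factor $\bigl(X^2/(10+Y^2)\bigr)^{1/2-1/q}$ combined with a power of $10+Y^2$ and with $\hatrho$-factors that come from the pressure estimate~\eqref{NOV24, to use} and from comparing $Y$ to $D$ via~\eqref{DY2}. A tuned application of Young's inequality (with exponent $1-2/q<1$, permitting absorption) then allows one to pass the $X^{2(1-2/q)}$-type contribution into $\tfrac{\kappa}{2}X^2$ on the left, yielding
$$\frac{d}{dt}Y^2(t)+\tfrac{\kappa}{2}X^2(t)\leq C\,\hatrho^{\varsigma}(t)\bigl(1+D^2(t)\bigr)\bigl(10+Y^2(t)\bigr).$$
The exponent $\varsigma=1+\beta\varepsilon+\max\{0,\gamma-2\beta,\beta-\gamma-2\}$ emerges from three sources: the $\beta\varepsilon/2$ factor in Lemma~\ref{lem: Lp for nabla u} (doubled through Young), the maximum in~\eqref{NOV24, to use}, and the maximum in~\eqref{DY2}.

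Finally, dividing through by $10+Y^2(t)$ and integrating over $[0,t]$ gives
$$\log\bigl(10+Y^2(t)\bigr)\leq\log\bigl(10+Y^2(0)\bigr)+C\,\hatrho^{\varsigma}(t)\int_0^t\bigl(1+D^2(\tau)\bigr)\,\dd\tau.$$
The energy inequality of Lemma~\ref{lemma: energy ineq} provides $\int_0^\infty D^2(\tau)\,\dd\tau\leq C\,E_0$ uniformly in time, while $Y^2(0)$ is controlled in terms of $M$ and $E_0$, and the conclusion follows. The hard part will be the bookkeeping in the preceding paragraph: several cross-terms in $\mathcal{N}$ have to be estimated simultaneously, and each of the three regimes embedded in the maximum defining $\varsigma$ must be attained by a concrete term, all while keeping $q$ slightly above $2$ and $\varepsilon$ small, so that Young's inequality leaves exactly $\tfrac{\kappa}{2}X^2$ absorbable on the left rather than producing a strictly larger power of $\hatrho$ on the right.
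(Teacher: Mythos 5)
Your overall architecture coincides with the paper's: the reduction to $\log(10+Y^2)$ via Remark~\ref{nablau, L2}, the identity $\frac{\dd}{\dd t}Y^2+2X^2=\mathcal{N}$ (the paper's \eqref{sum}, with $X^2=\int_\tor\rho|\dot u|^2\,\dd x$ exactly as you say), the absorption of the $X$-contributions by Young's inequality, and the logarithmic Gr\"{o}nwall step. However, two points in your execution plan would fail as written.

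First, the claim that ``each constituent of $\mathcal{N}$ admits a H\"{o}lder estimate in which $\|\nabla u\|_{L^q}$ appears for some $q>2$'' does not survive contact with the hardest term, $4\int_\tor B\,\nabla u_1\cdot\nabla^{\bot}u_2\,\dd x$. A plain H\"{o}lder bound $\|B\|_{L^p}\|\na u\|_{L^{2p'}}^2$ forces you to interpolate $\|\na u\|_{L^{2p'}}$ against $\|\na u\|_{H^1}$, which is \emph{not} controlled (it involves $\na\rho$ through $\na\big(B/(\lambda+2\mu)\big)$); routing it instead through Lemma~\ref{lem: Lp for nabla u} and Young produces powers of $D$ strictly larger than $2$, which are not time-integrable from the energy inequality. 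The paper's proof needs the $\mathcal{H}^1$--$\mathcal{BMO}$ duality together with the div--curl lemma of compensated compactness (\cite{Coi}) to get $I_2\leq C\|\na B\|_{L^2}\|\na u\|_{L^2}^2$, and for the other cubic terms it interpolates $B$ and $\rotu$ themselves — whose $H^1$ norms \emph{are} controlled by $\trho^{1/2}X$ and $D$ via \eqref{BH1} — rather than $\na u$. This structural ingredient is absent from your plan and cannot be replaced by H\"{o}lder alone.

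Second, your final differential inequality carries the prefactor $(1+D^2)$, so your last display reads $\log(10+Y^2(t))\leq\log(10+Y^2(0))+C\hatrho^{\varsigma}(t)\int_0^t(1+D^2)\,\dd\tau$, and $\int_0^t 1\,\dd\tau=t$ grows linearly: this gives only a bound of the form $C\hatrho^{\varsigma}(t)\,(t+E_0)$, not the time-uniform bound the Proposition asserts (and which \S\ref{sec: rho bound} crucially needs). The paper arranges matters so that \emph{every} term on the right of \eqref{dY} carries a full factor of $D^2$ — each $I_i$ contains at least one factor of $D$, and Young's inequality applied to $D\cdot\trho^{1/2}X\cdot(\cdots)$ returns $\delta X^2+C\trho D^2(\cdots)^2$ — so that the time integral is controlled by $\int_0^\infty D^2\,\dd t\leq C(\gamma,E_0)$ alone. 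You must verify this $D^2$ structure term by term; the spurious ``$1+$'' is not a cosmetic issue.
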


\begin{proof}[Proof of Proposition~\ref{propn: logY}]
We divide our arguments into five steps below.

The first four steps are devoted to establishing a bound for $\log Y$ of independent interest:
\begin{equation}\label{log Y bound}
\log\big(10+Y^2(t)\big)+\int_{0}^{t} \frac{X^2(\tau)}{10+Y^2(\tau)}\,\mathrm{d}\tau\leq C\hatrho^{\,\varsigma}(t).
\end{equation}
This is motivated by Perepelitsa \cite[p.1141, Corollary~2]{perep}; nonetheless, our proof does not involve the Orlicz space techniques as in \cite{perep}, and our result differs from \cite{perep} by providing a bound in terms of a power of $\hatrho$ only. Finally, in Step~5 we deduce the desired bound for $\log\big(10 + \|\na u\|^2_{L^2} \big)$.

\smallskip
\noindent
{\bf Step 1.} By utilising the identity $$\Delta u=\nabla \mathrm{div} u+\nabla^{\bot}\mathrm{rot} u,$$ we recast the equation for the conservation of momentum into the following form: 
\begin{equation}
			\label{du}
			\frac{\mathrm{D} u}{\mathrm{D}t}+\frac{1}{\rho}\nabla\big(P-\bar{P}-(\lambda+2\mu)\mathrm{div} u\big)-\frac{1}{\rho}\mu\nabla^{\bot}\mathrm{rot} u=0.
\end{equation}
Next, applying $\mathrm{rot}$ and $\mathrm{div}$ to Equation~\eqref{du} above and noting the definition of $B$ in \eqref{defB}, we obtain the two scalar equations below:
\begin{align}\label{drotu}
\frac{\mathrm{D}}{\mathrm{D}t}\mathrm{rot} u-\mathrm{rot} u \,\mathrm{div} u &=\frac{\mathrm{D}}{\mathrm{D}t}\mathrm{rot} u -(\partial_1 u\cdot\nabla)u_2 +(\partial_2 u\cdot \nabla)u_1 \nonumber\\
&=\mathrm{rot}\frac{1}{\rho}\left(\nabla B+\mu \nabla^{\bot}\mathrm{rot} u\right)  
\end{align}
and
\begin{align}\label{ddivu}
&  \frac{\mathrm{D}}{\mathrm{D}t} \mathrm{div} u +(\partial_1 u\cdot\nabla)u_1 +(\partial_2 u\cdot\nabla)u_2\nonumber\\&\qquad\qquad=\frac{\mathrm{D}}{\mathrm{D}t}\left(\frac{B}{\lambda+2\mu}\right)+\frac{\mathrm{D} }{\mathrm{D}t}\left(\frac{P-\bar{P}}{\lambda+2\mu}\right)+|\mathrm{div} u|^2- 2\nabla u_1\cdot \nabla^{\bot}u_2\nonumber\\
&\qquad\qquad=\mathrm{div}\frac{1}{\rho}(\nabla B+\mu\nabla^{\bot}\mathrm{rot} u).  
\end{align}

\smallskip
\noindent
{\bf Step 2.} We now derive an energy identity, Equation~\eqref{sum} below, for (suitably weighted) $L^2$-norms of $\rotu$ and $B$. We do not claim any novelty here: our identity~\eqref{sum} is the same as \cite[Equation~(39)]{perep}.

To this end, we first multiply Equation~\eqref{drotu} by $2\mu\,\mathrm{rot}u$ to obtain that
		\begin{equation}
			\label{rotu2}
			\mu\frac{\mathrm{D}}{\mathrm{D}t}|\mathrm{rot} u|^2 +2\mu\,|\mathrm{rot} u|^2 \mathrm{div} u =2\mu\,\mathrm{rot} u \, \mathrm{rot} \left[\frac{1}{\rho}(\nabla B+\mu \nabla^{\bot} \mathrm{rot} u)\right].
		\end{equation}
We also obtain via multiplying Eq.~\eqref{ddivu} by $2B$ that 
\begin{align}\label{B2}
&\frac{\mathrm{D}}{\mathrm{D}t} \left(\frac{B^2}{\lambda+2\mu}\right)+\frac{B^2}{\lambda+2\mu}\mathrm{div}u-\frac{B^2}{\lambda+2\mu}\mathrm{div}u +B^2\frac{\mathrm{D}}{\mathrm{D}t}\left(\frac{1}{\lambda+2\mu}\right)\nonumber\\
&\qquad+2B\frac{\mathrm{D}}{\mathrm{D}t}\left(\frac{P-\bar{P}}{\lambda+2\mu}\right) +2B|\mathrm{div}u|^2-4B\nabla u_1\cdot \nabla^{\bot}u_2 \nonumber\\
&\qquad\qquad =  2B\mathrm{div}\left[\frac{1}{\rho} (\nabla B+\mu\nabla^{\bot}\mathrm{rot}u)\right].
\end{align}
In Equation~\eqref{B2} above, one may further express
\begin{align*}
      \frac{\mathrm{D}}{\mathrm{D}t}\left(\frac{P-\bar{P}}{\lambda+2\mu}\right)&=\beta \frac{\lambda(P-\bar{P})}{\lambda+2\mu}\mathrm{div}u-\frac{\gamma P}{\lambda+2\mu}\mathrm{div}u \nonumber\\
      &\qquad+\frac{\gamma-1}{\lambda+2\mu}\int_{\mathbb{T}^2} (P-\bar{P})\mathrm{div}u \,\mathrm{d}x,
\end{align*}
thanks to the continuity equation~$\eqref{PDE, 1}_1$. Also, for $\lambda=\lambda(\rho)=\rho^\beta$ as in assumption~\eqref{assumption}, one has 
\begin{align*}
        \frac{\mathrm{D}}{\mathrm{D}t}\left(\frac{1}{\lambda+2\mu}\right) &=-\frac{\lambda_t+u\cdot\nabla \lambda}{(\lambda+2\mu)^2} \nonumber\\
        &=\frac{\beta\lambda \mathrm{div}u}{(\lambda+2\mu)^2}.
       \end{align*}
In view of the two identities above, we add up together Equations~\eqref{rotu2} and \eqref{B2} and integrate over $\mathbb{T}^2$ to deduce that
   \begin{align}
        \label{sum}
		&\frac{\mathrm{d}}{\mathrm{d}t}\left[ \int_{\mathbb{T}^2} \mu|\mathrm{rot} u|^2 +\frac{B^2}{\lambda+2\mu} \,\mathrm{d}x\right] +2\int_{\mathbb{T}^2} \frac{\left|\nabla B+\mu \nabla^{\bot} \mathrm{rot} u\right|^2}{\rho}\,\mathrm{d}x \nonumber\\
		=&-\mu\int_{\mathbb{T}^2} |\mathrm{rot} u|^2 \mathrm{div} u \,\mathrm{d}x +4\int_{\mathbb{T}^2} B\nabla u_1\cdot \nabla^{\bot}u_2\,\mathrm{d}x -2\int_{\mathbb{T}^2} B|\mathrm{div} u|^2\,\mathrm{d}x \nonumber\\
		&-\int_{\mathbb{T}^2} \frac{(\beta-1)\lambda-2\mu}{(\lambda+2\mu)^2}B^2 \mathrm{div} u\,\mathrm{d}x -2\beta\int_{\mathbb{T}^2} \frac{\lambda(P-\bar{P})}{(\lambda+2\mu)^2}B \mathrm{div} u\,\mathrm{d}x \nonumber\\
		&+2\gamma\int_{\mathbb{T}^2} \frac{P}{\lambda+2\mu} B\mathrm{div}u\,\mathrm{d}x-2(\gamma-1)\left\{\int_{\mathbb{T}^2} (P-\bar{P})\mathrm{div}u \,\mathrm{d}x\right\}\left\{\int_{\mathbb{T}^2} \frac{B}{\lambda+2\mu} \,\mathrm{d}x\right\} \nonumber\\
		=:&\sum_{i=1}^{7}I_i.
    \end{align}

\smallskip
\noindent
{\bf Step 3.} In this step, we shall estimate each of the terms $I_1, \ldots, I_7$.

\begin{enumerate}
    \item 
The estimate for $I_1$ is straightforward. In view of the H\"{o}lder and Sobolev inequalities and the definition of $D$ in \eqref{D, def}, we have that
\begin{align}
 \label{I1}
I_1 &\leq C\|\mathrm{rot} u\|_{L^4}^2\|\mathrm{div}u\|_{L^2}\nonumber\\
&\leq C\|\mathrm{rot}u\|_{L^2}\|\mathrm{rot} u\|_{H^1}\|\mathrm{div}u\|_{L^2}\nonumber\\
&\leq C(E_0) D(t) \|\mathrm{rot}u\|_{H^1} \left(Y(t)+\tilde{\rho}^{\max\left\{0,\frac{1}{2}\gamma-\beta \right\}}\right),
\end{align}
where we use the fact that $\|\mathrm{div}u\|_{L^2}\leq \|\nabla u\|_{L^2}$ and Remark \ref{nablau, L2} to control $\|\nabla u\|_{L^2}$.
\item 
The estimate for $I_2$ is classical in the literature of PDEs for compressible fluid mechanics \cite{lions, fnp}. Indeed, one may bound
\begin{align}
    \label{I2}
I_2 &\leq C\|B\|_{\mathcal{BMO}} \left\|\nabla u_1\cdot\nabla^{\bot} u_2\right\|_{\mathcal{H}^1} \nonumber\\
&\leq C\|\nabla B\|_{L^2}\|\nabla u\|_{L^2}^2\nonumber\\
&\leq C(E_0)\|B\|_{\dot{H}^1} D(t)\left(Y(t)+\tilde{\rho}^{\max\left\{0,\frac{1}{2}\gamma-\beta \right\}}\right).
\end{align}
The first line follows from the Hardy-BMO duality; the second line holds by the continuous embedding $\dot{H}^1(\tor) \emb \mathcal{BMO}(\tor)$ and the div-curl lemma of the compensated compactness theory (note that $\mathrm{rot} \nabla u_1= \mathrm{div} \nabla^{\bot} u_2=0$; see Coifman--Lions--Meyer--Semmes \cite[Theorem II.1]{Coi}); and the final line follows from the definition of $D$ in \eqref{D, def} and Remark \ref{nablau, L2}. 

\item 

For $I_3$, we estimate 
\begin{align}\label{I3}
|I_3| &\leq 2 \int_\tor |B| \left|\frac{B+P-\bar{P}}{\lambda+2\mu}\right||\divu|\,\dd x\nonumber\\
&\leq C\int_{\mathbb{T}^2} \frac{B^2|\mathrm{div}u|}{\lambda+2\mu}\,\mathrm{d}x+C\int_{\mathbb{T}^2}\frac{|B|\left|P-\bar{P}\right||\mathrm{div} u|}{\lambda+2\mu}\,\mathrm{d}x\nonumber\\
&\leq C\left\|\frac{B^2}{\lambda+2\mu}\right\|_{L^2}\|\mathrm{div} u\|_{L^2}+C\left\|\frac{P-\bar{P}}{(\lambda+2\mu)^{\frac{3}{2}}}\right\|_{L^{2+\frac{2\beta}{\gamma}}}\|B\|_{L^{2+\frac{2\gamma}{\beta}}}\|\sqrt{\lambda+2\mu}\,\mathrm{div}u\|_{L^2}\nonumber\\
&=: I_{3,1}+I_{3,2}.
\end{align}
Here the first line utilises the definition of $B$ in \eqref{defB} and follows Perepelitsa \cite[Equation~(44)]{perep}, and the third line holds by the H\"{o}lder inequality.
\begin{itemize}
    \item 
For $I_{3,1}$, we apply the crude bound $\|\divu\|_{L^2} \leq C(\mu)D$ and the following estimate via H\"{o}lder's inequality, the definition of $Y$ in \eqref{Y, def}, and Lemma~\ref{1}:
\begin{align*}
\left\|\frac{B^2}{\lambda+2\mu}\right\|_{L^2} &\leq C(\mu,\e) \left\{\int_\tor\frac{B^{2(1-\e)}}{(\lambda+2\mu)^{1-\e}} \cdot B^{2(1+\e)}\,\dd x\right\}^{\frac{1}{2}} \\
&\leq C(\mu,\e) \left\|\frac{B}{\sqrt{\lambda+2\mu}}\right\|_{L^2}^{1-\varepsilon}\|B\|_{L^{\frac{2(1+\e)}{\varepsilon}}}^{1+\varepsilon} \\
&\leq C(\mu,\e) Y^{1-\e}\|B\|_{L^2}^\e \|B\|_{H^1}.
\end{align*}
Hence, $I_{3,1} \leq C(\mu,\e)DY^{1-\e}\|B\|_{L^2}^\e \|B\|_{H^1}$. In addition, by the definition of $Y$ in \eqref{Y, def}, the assumptions in \eqref{assumption} and that $\trho \geq 1$ in \eqref{suprho}, we further have that
\begin{align*}
    \|B\|_{L^2}^\e &= \left\{\int_\tor \frac{B^2}{\lambda+2\mu} \cdot (\lambda+2\mu) \,\dd x \right\}^{\frac{\e}{2}}\\
    &\leq C(\mu) Y^\e \trho^{\frac{\beta \e}{2}}.
\end{align*}
We thus arrive at
\begin{align*}
    I_{3,1} \leq C(\mu,\e) \trho^{\frac{\beta\e}{2}} DY \|B\|_{H^1}. 
\end{align*}
    \item 
    For $I_{3,2}$, we use the energy estimates to obtain
    \begin{align}\label{NOV24, to use2}
      \left\|\frac{P-\bar{P}}{(\lambda+2\mu)^{\frac{3}{2}}}\right\|_{L^{2+\frac{2\beta}{\gamma}}}&\leq C \left\{\int_\tor \rho^{(\gamma-\frac{3}{2}\beta)(2+\frac{2\beta}{\gamma})}\,\dd x\right\}^{\frac{\gamma}{2\gamma+2\beta}}\nonumber\\
    &\leq C\left\{\int_\tor \rho^\gamma\,\dd x\right\}^{\frac{\gamma}{2\gamma+2\beta}} \tilde{\rho}^{\max\left\{0,\frac{\gamma+2\beta}{2\gamma+2\beta}\gamma-\frac{3}{2}\beta\right\}}(t)\nonumber\\
    &\leq C(q, E_0)\tilde{\rho}^{\max\left\{0,\frac{1}{2}\gamma-\beta\right\}}(t),
    \end{align}
    where the third line we used the fact that $$\frac{\gamma+2\beta}{2\gamma+2\beta}\gamma-\frac{3}{2}\beta=\frac{1}{2}\gamma-\beta+\frac{\beta}{2\gamma+2\beta}\gamma-\frac{1}{2}\beta<\frac{1}{2}\gamma-\beta.$$
 In view of the definition of $D$ in \eqref{D, def}, it is clear that
\begin{align*}
   I_{3,2} \leq C(\mu) \tilde{\rho}^{\max\left\{0,\frac{1}{2}\gamma-\beta\right\}}D\|B\|_{H^1}.
\end{align*}
\end{itemize}

In summary, we have
\begin{align}
    \label{I3, final}
    |I_3| \leq C(\mu,\e) \trho^{\frac{\beta\e}{2}} DY \|B\|_{H^1} +   C(\mu) \tilde{\rho}^{\max\left\{0,\frac{1}{2}\gamma-\beta\right\}}D\|B\|_{H^1}.
\end{align}

\item 
Next, for $I_4$, $I_5$, and $I_6$, observe that they are all bounded by the expression in the second or the third line of \eqref{I3} above. Indeed, for $I_4$ we simply note that $\left|\frac{(\beta-1)\lambda-2\mu}{(\lambda+2\mu)^2}\right| \leq \beta$, so 
\begin{align*}
|I_4| \leq \beta \int_{\mathbb{T}^2} \frac{B^2|\mathrm{div}u|}{\lambda+2\mu}\,\mathrm{d}x.
\end{align*}
For $I_5$, it is clear that
\begin{align*}
|I_5| \leq 2\beta    \int_{\mathbb{T}^2}\frac{|B|\left|P-\bar{P}\right||\mathrm{div} u|}{\lambda+2\mu}\,\mathrm{d}x.
\end{align*}
Then, for $I_6$, since $\frac{\bar{P}}{\lambda+2\mu}\leq C(\mu,E_0)$, by triangle and H\"{o}lder inequalities, we have that
\begin{align*}
    |I_6| &\leq 2\gamma \int_{\mathbb{T}^2}\frac{|B|\left|P-\bar{P}\right||\mathrm{div} u|}{\lambda+2\mu}\,\mathrm{d}x+2\gamma \int_{\mathbb{T}^2}\frac{|B||\bar{P}||\mathrm{div} u|}{\lambda+2\mu}\,\mathrm{d}x\\
    &\leq C\int_{\mathbb{T}^2}\frac{|B|\left|P-\bar{P}\right||\mathrm{div} u|}{\lambda+2\mu}\,\mathrm{d}x +C\|B\|_{L^2}\|\mathrm{div}u\|_{L^2}.
\end{align*}
Comparing the above bounds for $I_4$, $I_5$, and $I_6$ with the second and the third lines in \eqref{I3} above, we deduce that
\begin{align}\label{I4 -- I6}
|I_4|+|I_5|+|I_6| &\leq CD(t)\|B\|_{H^1} \left\{\trho^{\frac{\beta\e}{2}}(t)Y(t) + \trho^{\max\{0,\frac{1}{2}\gamma-\beta\}}(t)\right\},
\end{align}
with some constant $C=C(\e,\mu,\beta,\gamma)$. Here $\e \in ]0,1[$ is arbitrary.

\item 

Finally let us turn to $I_7$. Note by $\int_{\tor}\mathrm{div}u\,\mathrm{d}x=0$ and the energy inequality (Lemma~\ref{lemma: energy ineq}) that
\begin{align*}
    \left|\int_\tor \frac{B}{\lambda+2\mu}\,\dd x\right| = \left|\int_{\tor} \mathrm{div}u-\frac{P-\bar{P}}{\lambda+2\mu}\,\mathrm{d}x \right|\leq C(\mu)\int_{\tor}|P-\bar{P}|\,\mathrm{d}x \leq C(\mu,E_0).
\end{align*}
On the other hand, from the definition of $B$ in \eqref{defB}, we have that
\begin{align*}
&\left|\int_{\mathbb{T}^2} (P-\bar{P})\mathrm{div}u \,\mathrm{d}x\right| = \left|\int_{\mathbb{T}^2} \left((\lambda+2\mu)\mathrm{div}u-B\right)\mathrm{div}u \,\mathrm{d}x\right|\\
&\qquad \leq D^2(t)+\|B\|_{L^2}\|\mathrm{div}u\|_{L^2}\\
&\qquad \leq C\Big\{D^2(t)+\|B\|_{H^1}D(t)\Big\},
\end{align*}
thanks to the Cauchy--Schwarz inequality and the definition of $D$ in \eqref{D, def}. Hence,
\begin{equation}\label{I7}
    |I_7| \leq C(E_0) \|B\|_{H^1}\Big\{D(t)+C(E_0)D^2(t)\Big\}.
\end{equation}

\end{enumerate}

\smallskip
\noindent
{\bf Step 4.} Now we are ready to conclude the estimate~\eqref{log Y bound}. Substituting the estimates~\eqref{I1}, \eqref{I2}, \eqref{I3, final}, \eqref{I4 -- I6}, and \eqref{I7} into the identity~\eqref{sum}, noting by \eqref{suprho} that $\trho \geq 1$, and making use of the bounds for $\|B\|_{H^1}$ and $\|\rotu\|_{H^1}$ in \eqref{BH1}, we deduce the differential inequality: 
\begin{align*}
    \frac{\mathrm{d} Y^2}{\mathrm{d}t} +2X^2 &\leq C \cdot D \left[\trho^{\frac{1}{2}}X + \trho^{\max\left\{\frac{\beta-\gamma}{2},0\right\}}D\right] \left[ \trho^{\frac{\beta\e}{2}}Y + \trho^{\max\{\frac{1}{2}\gamma-\beta,0\} }\right].
\end{align*}
where $0<\e<1$ is arbitrary, and $C=C(\e,E_0,\mu,\beta,\gamma)$. By Young's inequality it holds that
\begin{align*}
			\frac{\mathrm{d} Y^2}{\mathrm{d}t} +  X^2 &\leq C\trho D^2   \left[\trho^{\frac{\beta\e}{2}}Y + \trho^{\max\{\frac{1}{2}\gamma-\beta,0\} }\right]^2 + C\trho^{\max\left\{\frac{\beta-\gamma}{2},0\right\}-\frac{1}{2}} D \cdot \sqrt{\trho} D \left[\trho^{\frac{\beta\e}{2}}Y + \trho^{\max\{\frac{1}{2}\gamma-\beta,0\} }\right]\\
			&\leq C'\trho D^2   \left[\trho^{\frac{\beta\e}{2}}Y + \trho^{\max\{\frac{1}{2}\gamma-\beta,0\} }\right]^2 + C'D^2 \trho^{\max\{\beta-\gamma,0\}-1},
		\end{align*}
		for $C$ and $C'$ depending only on $\e$, $E_0$, $\mu$, $\beta$, and $\gamma$ as before. We thus arrive at
		\begin{equation}
			\label{dY}
			\begin{aligned}
				\frac{\mathrm{d} Y^2}{\mathrm{d}t} +X^2\leq CD^2\tilde{\rho}\left\{\tilde{\rho}^{\beta\varepsilon}Y^2 +\tilde{\rho}^{\max\left\{0, \gamma-2\beta \right\}} +\tilde{\rho}^{\max\left\{0, \beta-\gamma\right\}-2}\right\},
			\end{aligned}
		\end{equation}
		where  $C=C(\e, E_0, \mu,\beta,\gamma)$. 
Dividing by $10 + Y^2$ on both sides of the estimate above and integrating over $]0,t[$, one infers that
\begin{align*}
&\log(10+Y^2(t))+\int_{0}^{t} \frac{X^2(\tau)}{10+Y^2(\tau)}\,\mathrm{d}\tau \nonumber\\
&\qquad \leq \log(10+Y^2(0))+ C \int_0^t D^2(\tau) \tilde{\rho}^{\,{1+\beta\varepsilon+\max\left\{0,  \gamma-2\beta,\beta-\gamma-2\right\}}}(\tau)\,\dd\tau.
\end{align*}
In view of Lemma~\ref{lemma: energy ineq}, it holds that $\int_0^\infty D^2(t)\,\dd t\leq C(\gamma,E_0)$. Thus, 
denoting
\begin{equation*}
    \hatrho(t):= \sup_{\tau \in [0,t]} \trho(\tau) \equiv  \|\rho(\cdot,\cdot)\|_{\linf([0,t]\times \tor)},
\end{equation*}
we conclude that
\begin{equation*}
\log\left(10+Y^2(t)\right)+\int_{0}^{t} \frac{X^2(\tau)}{10+Y^2(\tau)}\,\mathrm{d}\tau\leq C{\hatrho}^{\,1+\beta\varepsilon+\max\left\{0,  \gamma-2\beta,\beta-\gamma-2\right\}}(t).
\end{equation*}
Here  $C=C(\e, E_0, Y(0),  \mu,\beta,\gamma)$, where $Y(0)$ is completely determined by $M$ (the essential upper bound for $\rho_0$), $\mu$, $\gamma$, and $E_0$.  This completes the proof of Equation~\eqref{log Y bound}.

\smallskip
\noindent
{\bf Step 5.} Finally, to deduce the bound for $\log\big(10+\|\nabla u\|_{L^2}^2(t)\big)$, let us first note by the div-curl estimate in Lemma~\ref{lemma1} that 
\begin{align*}
    \log\left(10+\|\nabla u\|_{L^2}^2\right) \leq C\log\left\{10 + \int_\tor \left(|\rotu|^2+|\divu|^2\right)\,\dd x\right\}.
\end{align*}
In view of the definition of $B$ and $Y$ in Equations~\eqref{defB} and \eqref{Y, def} and the assumptions for $P$, $\lambda$, and $\mu$ in \eqref{assumption}, we have that
\begin{align*}
 \int_\tor \left(|\rotu|^2+|\divu|^2\right)\,\dd x &\leq \int_\tor \left\{|\rotu|^2 + \frac{B^2}{(\lambda+2\mu)^2}+\frac{(P-\bar{P})^2}{(\lambda+2\mu)^2}\right\}\,\dd x\\
 &\leq C(\mu) \left(Y(t)^2 +\tilde{\rho}^{\max\left\{0, \gamma-2\beta\right\}}(t)\right).
\end{align*}
Thus, from Equation~\eqref{log Y bound} we deduce that
\begin{align*}
    \log\left(10+\|\nabla u\|_{L^2}^2\right) \leq C(\mu)\hatrho^{\,1+\beta\varepsilon+\max\left\{0, \gamma-2\beta,\beta-\gamma-2\right\}}.
\end{align*}
This completes the proof.  \end{proof}


 \section{$\linf$-bound for the commutator term $G$}\label{sec: G}

In this section, we estimate the $\linf$-norm of the commutator term $G$, which turns out to be crucial for showing that neither vacuum nor concentration forms in the fluid at $t = +\infty$. Recall from Equation~\eqref{G, def} that
\begin{align*}
G := \sum_{i,j=1}^2 \left[u_i,(-\Delta)^{-1}\partial_i\partial_j\right](\rho u_j).
\end{align*}
The key novel point of our arguments is to apply Desjardins' estimate (Lemma~\ref{rhou}).

\begin{proposition}
\label{propn: boundG}
For any $\varepsilon\in ]0,1[$, $\beta>1$, and $q>4$, there exists a constant  $C=C(\e,\gamma, \mu, q, E_0)$ such that the commutator term $G$ associated to the strong solution $(\rho, u)$ is bounded as follows:
		\begin{equation*}
			\begin{aligned}
				\|G\|_{L^{\infty}}\leq C\left\{\hatrho^{\,\alpha_1}\left[D^{2-\frac{2}{q}} + D^{2-\frac{6}{q}}\right]
				+\hatrho ^{\,\alpha_2} \left[D^{2-\frac{2}{q}} + D^{2-\frac{6}{q}} \right]\left(\frac{X^2}{10+Y^2}\right)^{\frac{1}{q}}+ \hatrho^{\,\alpha_3} D^{2-\frac{6}{q}}\right\}.	
			\end{aligned}
		\end{equation*}
The indices $\alpha_1, \alpha_2, \alpha_3$ are defined as
\begin{eqnarray*}
&& \alpha_1=\left(\frac{\beta\varepsilon}{2}+\frac{|\gamma-\beta|}{4}\right)\frac{4}{q}+1-\frac{1}{q}+\left(\frac{1}{2}-\frac{1}{q}\right)\varsigma,\\
&& \alpha_2=\left(\frac{\beta\e}{2}+ \max\left\{\frac{\gamma-\beta}{2},0\right\}\right)\frac{4}{q}+1+\left(\frac{1}{2}-\frac{1}{q}\right)\varsigma,\\
&& \alpha_3=\max\left\{0,\frac{3\gamma}{4}-\beta\right\}\frac{4}{q}+1-\frac{1}{q}+\left(\frac{1}{2}-\frac{1}{q}\right)\varsigma,
\end{eqnarray*}
where, as before, $\varsigma:=1+\beta\varepsilon+\max\left\{0, \gamma-2\beta,\beta-\gamma-2\right\}$.
	\end{proposition}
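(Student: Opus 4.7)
The plan is to estimate $\|G\|_{L^\infty}$ by combining a Gagliardo--Nirenberg interpolation with the commutator estimates of Lemma~\ref{commu} and Desjardins' logarithmic inequality (Lemma~\ref{rhou}), closing the loop by converting the resulting logarithm into a power of $\hatrho$ via Proposition~\ref{propn: logY}. Concretely, for $q>4$ (so that $q/2>2$), I would start from a Gagliardo--Nirenberg inequality on $\mathbb{T}^2$ of the form
\begin{equation*}
\|G\|_{L^\infty} \leq C\Bigl\{\|\nabla G\|_{L^{q/2}}^{4/q}\|G\|_{L^{q/2}}^{1-4/q} + \|G\|_{L^{q/2}}\Bigr\}.
\end{equation*}
The interpolation exponent $4/q$ on $\|\nabla G\|_{L^{q/2}}$ is forced by dimensional scaling, and it is precisely this factor that multiplies each of $\alpha_1,\alpha_2,\alpha_3$ in the conclusion --- which explains the hypothesis $q>4$.

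Next I would bound the two Sobolev norms of $G$ via Lemma~\ref{commu}. Its gradient-commutator form yields
\begin{equation*}
\|\nabla G\|_{L^{q/2}} \leq C\|\nabla u\|_{L^{r_1}}\|\rho u\|_{L^{r_2}},\qquad \tfrac{1}{r_1}+\tfrac{1}{r_2}=\tfrac{2}{q},
\end{equation*}
and its BMO form, combined with~\eqref{bmo2}, yields $\|G\|_{L^{q/2}}\leq C\|\nabla u\|_{L^2}\|\rho u\|_{L^{q/2}}$. To control the $L^p$-norms of $\rho u$ that have now appeared (for $p\in\{r_2, q/2\}$), I would use the pointwise splitting $\rho u=\rho^{1-1/p}\cdot \rho^{1/p}u$, so that $\|\rho u\|_{L^p}\leq \tilde\rho^{\,1-1/p}\|\rho^{1/p}u\|_{L^p}$, and then apply Desjardins' Lemma~\ref{rhou} with parameter $p/2$. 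Using Lemma~\ref{lemma: energy ineq} to dispose of $\|\sqrt\rho u\|_{L^2}$ and~\eqref{uu} to dispose of $|\int u|$, this produces
\begin{equation*}
\|\rho u\|_{L^p} \leq C\tilde\rho^{\,1-1/p}\|\nabla u\|_{L^2}^{1-2/p}\Bigl\{1 + [\log(2+\|\nabla u\|_{L^2}^2)]^{1/2-1/p}\Bigr\},
\end{equation*}
after which Proposition~\ref{propn: logY} replaces the bracketed logarithm by $1+\hatrho^{\,\varsigma(1/2-1/p)}$ --- thereby producing the $(\tfrac{1}{2}-\tfrac{1}{q})\varsigma$-type $\hatrho$-exponent common to all three $\alpha_i$.

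The remaining work is algebraic: substitute Lemma~\ref{lem: Lp for nabla u} for $\|\nabla u\|_{L^{r_1}}$ and Remark~\ref{nablau, L2} for $\|\nabla u\|_{L^2}$, then repeatedly invoke $(a+b)^\theta \leq a^\theta+b^\theta$ for $\theta\in[0,1]$ together with $\hatrho\geq\tilde\rho\geq 1$ from~\eqref{suprho} to split the resulting product into the three advertised summands. I expect the main obstacle to be exponent bookkeeping --- specifically, choosing $r_1$ so that the three-term decomposition of Lemma~\ref{lem: Lp for nabla u} synchronises, after the $4/q$-th power weighting from the Gagliardo--Nirenberg step, with the three advertised leading $\hatrho$-exponents. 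The choice $r_1=4$ is natural here, since Lemma~\ref{lem: Lp for nabla u} at $q'=4$ produces $\tilde\rho$-exponents $\tfrac{\beta\varepsilon}{2}+\tfrac{|\gamma-\beta|}{4}$, $\tfrac{\beta\varepsilon}{2}+\tfrac14+\max\{0,\tfrac{\gamma-\beta}{2}\}$, and $\max\{0,\tfrac{3\gamma}{4}-\beta\}$ that, when raised to the $4/q$-th power and augmented by the $\tilde\rho^{1-1/q}$ and $\hatrho^{\varsigma(1/2-1/q)}$ contributions coming from the $\|\rho u\|_{L^p}$-factor, give exactly $\alpha_1,\alpha_2,\alpha_3$; ensuring compatibility of this choice with the constraint $\tfrac{1}{r_1}+\tfrac{1}{r_2}=\tfrac{2}{q}$ uniformly in $q>4$ is the technical subtlety that will require care.
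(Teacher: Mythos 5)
Your overall strategy is the same as the paper's --- Gagliardo--Nirenberg for $\|G\|_{L^\infty}$, the Coifman--Meyer/Coifman--Rochberg--Weiss commutator bounds, the splitting $\|\rho u\|_{L^p}\le\tilde\rho^{\,1-1/p}\|\rho^{1/p}u\|_{L^p}$ followed by Desjardins' lemma, and Proposition~\ref{propn: logY} to convert the logarithm into $\hatrho^{\,\varsigma}$ --- and your exponent arithmetic for $\alpha_1,\alpha_2,\alpha_3$ (in particular taking $\|\nabla u\|_{L^4}$ via Lemma~\ref{lem: Lp for nabla u} at $q'=4$) is exactly what the paper needs. However, the "technical subtlety" you flag at the end is in fact a genuine obstruction with your choice of interpolation. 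Starting from $\|G\|_{L^\infty}\lesssim\|\nabla G\|_{L^{q/2}}^{4/q}\|G\|_{L^{q/2}}^{1-4/q}+\|G\|_{L^{q/2}}$ forces the commutator constraint $\tfrac{1}{r_1}+\tfrac{1}{r_2}=\tfrac{2}{q}$; with $r_1=4$ this gives $r_2=\tfrac{4q}{8-q}$, which is only meaningful for $4<q<8$ and degenerates as $q\to 8$. The proposition is used precisely in the regime where $q$ is taken large (to verify~\eqref{beta, Nov24}), so your argument does not cover the range needed. The alternative $r_1=r_2=q$ keeps the constraint but replaces $\|\nabla u\|_{L^4}$ by $\|\nabla u\|_{L^q}$, and Lemma~\ref{lem: Lp for nabla u} at $q'=q$ produces different $\tilde\rho$-exponents and the power $(\tfrac12-\tfrac1q)\tfrac4q$ on $X^2/(10+Y^2)$ instead of $\tfrac1q$, so the stated $\alpha_i$ are no longer obtained.

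The paper's fix is to interpolate differently at the very first step:
\begin{equation*}
\|G\|_{L^{\infty}}\leq C(q)\,\|\nabla G\|_{L^{\frac{4q}{q+4}}}^{\frac{4}{q}}\,\|G\|_{L^{q}}^{1-\frac{4}{q}},
\end{equation*}
which still carries the exponent $\tfrac4q$ (the scaling identity $0=\theta(\tfrac{q+4}{4q}-\tfrac12)+\tfrac{1-\theta}{q}$ gives $\theta=\tfrac4q$) but is compatible with $\tfrac{1}{r_3}=\tfrac14+\tfrac1q$, i.e.\ $r_1=4$ and $r_2=q$, \emph{uniformly} for all $q>4$. Then $\|G\|_{L^q}\le C\|u\|_{\mathcal{BMO}}\|\rho u\|_{L^q}\le C\|\nabla u\|_{L^2}\|\rho u\|_{L^q}$, so $\|\rho u\|_{L^q}$ factors out with total power one and Desjardins' lemma is invoked only once, at the single exponent $q$. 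If you rework your write-up with this interpolation, the rest of your computation goes through verbatim and yields the stated bound.
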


\begin{proof}[Proof of Proposition~\ref{propn: boundG}]
For any $q>4$, we may estimate as follows:
		\begin{align}\label{G, prelim}
			\|G\|_{L^{\infty}}&\leq C(q)\|\nabla G\|_{L^{\frac{4q}{q+4}}}^{\frac{4}{q}}\|G\|_{L^q}^{1-\frac{4}{q}} \nonumber \\
			&\leq C(q)\|\nabla u\|_{L^4}^{\frac{4}{q}}\|\rho u\|_{L^q}^{\frac{4}{q}}\|u\|_{{\mathcal{BMO}}}^{1-\frac{4}{q}}\|\rho u\|_{L^q}^{1-\frac{4}{q}}\nonumber \\
			&\leq C(q)\|\nabla u\|_{L^4}^{\frac{4}{q}}\|\rho u\|_{L^q}^{\frac{4}{q}}\|\nabla u\|_{L^2}^{1-\frac{4}{q}}\|\rho u\|_{L^q}^{1-\frac{4}{q}}\nonumber \\
			&= C(q)\|\nabla u\|_{L^4}^{\frac{4}{q}}\|\nabla u\|_{L^2}^{1-\frac{4}{q}}\|\rho u\|_{L^q}.
		\end{align}
The first line holds by the Gagliardo--Nirenberg--Sobolev interpolation inequality, the second line by the commutator estimates in Lemma~\ref{commu} (\emph{cf}. Coifman--Rochberg--Weiss\cite{Coifman} and Coifman--Meyer \cite{Coifman1}), and the third by the continuous embedding $\dot{W}^{1,2}(\tor) \emb \mathcal{BMO}(\tor)$ (see \eqref{bmo2}).

It now remains to bound $\|\rho u\|_{L^q}$. Clearly, one has that 
\begin{align*}
    \|\rho u\|_{L^q} \leq \tilde{\rho}^{1-\frac{1}{q}}(t)\left\|\rho^{\frac{1}{q}} u\right\|_{L^q}.
\end{align*}
We apply Desjardins' estimate in Lemma~\ref{rhou} to control the right-most term. This leads to
\begin{align*}
  \left\|\rho^{\frac{1}{q}} u\right\|_{L^q} \leq   C(q)\|\sqrt{\rho}u\|_{L^2}^{\frac{2}{q}} \|\nabla u\|_{L^2}^{1-\frac{2}{q}}\left\{\log \left(2+\frac{\|\nabla u\|_{L^2}^2 \|\rho\|_{L^{\gamma}}}{\|\sqrt{\rho}u\|_{L^2}^{2}}\right)\right\}^{\frac{1}{2}-\frac{1}{q}}.
\end{align*}
In addition, by virtue of the energy inequality in Lemma~\ref{lemma: energy ineq}, the terms $\|\rho\|_{L^\gamma}$ and $\|\sqrt{\rho}u\|_{L^2}$ are controlled by the initial energy $E_0$. Summarising the above arguments, we arrive at 
\begin{align}\label{G, prelim, 2}
     \|\rho u\|_{L^q}\leq C(q,\gamma, E_0) \tilde{\rho}^{1-\frac{1}{q}}(t) \|\nabla u\|_{L^2}^{1-\frac{2}{q}}\Big\{\log \left(2+\|\nabla u\|_{L^2}^2\right)\Big\}^{\frac{1}{2}-\frac{1}{q}}.
\end{align}

Finally, for the term $\|\na u\|_{L^4}$  in \eqref{G, prelim}, we apply Lemma~\ref{lem: Lp for nabla u} (reproduced below): for each $q'>2$ there exists a constant $C=C(\mu,\gamma, q',\varepsilon, E_0)$ such that:
\begin{equation*}
			\begin{aligned}
				\|\nabla u\|_{L^{q'}}&\leq C \tilde{\rho} ^{\frac{\beta\varepsilon}{2}+\max\left\{0,\frac{\gamma-\beta}{2}\right\}\cdot\frac{2}{q'}+\max\left\{0,\frac{\beta-\gamma}{2}\right\}\cdot(1-\frac{2}{q'})}\big[1+D\big]
    \\&\qquad+ C\tilde{\rho} ^{\frac{\beta\varepsilon}{2}+\frac{1}{2}-\frac{1}{q'}+\max\left\{0,\frac{\gamma-\beta}{2}\right\}}\big[1+D\big]\left(\frac{X^2}{10+Y^2}\right)^{\frac{1}{2}-\frac{1}{q'}} + C \tilde{\rho}^{\max\left\{0,\frac{q'-1}{q'}\gamma-\beta\right\}}.
			\end{aligned}
		\end{equation*}
In particular, when $q'=4$ this yields that
\begin{align*}
    \|\na u\|_{L^{4}} &\leq C \tilde{\rho} ^{\frac{\beta\varepsilon}{2}+\max\left\{0,\frac{\gamma-\beta}{4}\right\}+\max\left\{0,\frac{\beta-\gamma}{4}\right\}}\big[1+D\big]\\
    &\quad + C\trho^{\frac{\beta\e}{2}+\frac{1}{4} +\max\{0,\frac{\gamma-\beta}{2}\}}\big[1+D\big]\left(\frac{X^2}{10+Y^2}\right)^{\frac{1}{4}} + C\trho^{\max\left\{ 0, \frac{3\gamma}{4}-\beta \right\}}.
\end{align*}
This together with the estimates in \eqref{G, prelim} and \eqref{G, prelim, 2} leads to
\begin{align*} 
\|G\|_\linf &\leq C\Bigg\{ \trho^{\frac{\beta\e}{2}+\frac{|\beta-\gamma|}{4}}[1+D] + \trho^{\frac{\beta\e}{2}+\frac{1}{4} +\max\{0,\frac{\gamma-\beta}{2}\}}\big[1+D\big]\left(\frac{X^2}{10+Y^2}\right)^{\frac{1}{4}} + \trho^{\max\left\{ 0, \frac{3\gamma}{4}-\beta\right\}}
    \Bigg\}^{\frac{4}{q}}  \\
    &\qquad \times D^{1-\frac{4}{q}} \trho^{1-\frac{1}{q}} D^{1-\frac{2}{q}} \Big\{\log \left(2+\|\nabla u\|_{L^2}^2\right)\Big\}^{\frac{1}{2}-\frac{1}{q}},
\end{align*}
with some constant $C=C(\mu,\gamma, q,\varepsilon, E_0)$. Hence, by virtue of Proposition \ref{propn: logY}, we further have that
 \begin{align*}   
\|G\|_\linf & \leq C(\mu,\gamma, q,\varepsilon, E_0)\Big\{ \hatrho^{(\frac{\beta\e}{2}+\frac{|\beta-\gamma|}{4})\cdot\frac{4}{q}+1-\frac{1}{q}+\varsigma\cdot(\frac{1}{2}-\frac{1}{q})}[1+D]^{\frac{4}{q}}D^{2-\frac{6}{q}} \\
    & \qquad+\hatrho^{(\frac{\beta\e}{2}+\frac{1}{4} + \max\left\{\frac{\gamma-\beta}{2},0\right\})\cdot\frac{4}{q}+1-\frac{1}{q}+\varsigma\cdot(\frac{1}{2}-\frac{1}{q})} [1+D]^{\frac{4}{q}}\left(\frac{X^2}{10+Y^2}\right)^{\frac{1}{q}}D^{2-\frac{6}{q}}\\
    &\qquad +\hatrho^{\max\left\{ 0, \frac{3\gamma}{4}-\beta\right\}\cdot \frac{4}{q}+1-\frac{1}{q}+\varsigma\cdot(\frac{1}{2}-\frac{1}{q})}D^{2-\frac{6}{q}}
    \Big\}\\
   & \leq C(\mu,\gamma, q,\varepsilon, E_0)\Big\{ \hatrho^{(\frac{\beta\e}{2}+\frac{|\beta-\gamma|}{4})\cdot\frac{4}{q}+1-\frac{1}{q}+\varsigma\cdot(\frac{1}{2}-\frac{1}{q})}\left[D^{2-\frac{2}{q}} + D^{2-\frac{6}{q}}\right] \\
    & \qquad+\hatrho^{(\frac{\beta\e}{2}+\frac{1}{4} + \max\left\{\frac{\gamma-\beta}{2},0\right\})\cdot\frac{4}{q}+1-\frac{1}{q}+\varsigma\cdot(\frac{1}{2}-\frac{1}{q})} \left[D^{2-\frac{2}{q}} + D^{2-\frac{6}{q}} \right]\left(\frac{X^2}{10+Y^2}\right)^{\frac{1}{q}}\\
    &\qquad +\hatrho^{\max\left\{ 0, \frac{3\gamma}{4}-\beta\right\}\cdot \frac{4}{q}+1-\frac{1}{q}+\varsigma\cdot(\frac{1}{2}-\frac{1}{q})}D^{2-\frac{6}{q}}
    \Big\}.
\end{align*}
The assertion immediately follows from the definition of $\alpha_1$, $\alpha_2$, and $\alpha_3$.   
 \end{proof}

With the above preparations at hand (in particular, the $L^q$-bound for the gradient of $u$ in Lemma~\ref{lem: Lp for nabla u}, the estimate for $\log \|\na u\|_{L^2}$ in Proposition~\ref{propn: logY}, and the $\linf$-bound for the commutator term $G$ in Proposition~\ref{propn: boundG}), we are at the stage of proving our main Theorem~\ref{thm: main}. 
This is the main content of the remaining two sections:
\begin{itemize}
    \item 
In \S\ref{sec: rho bound} we prove the non-formation of vacuum or concentration of density at any finite time when $\beta>\frac{3}{2}$ and $1<\gamma<4\beta-3$, and then deduce the large-time behaviour of $\rho$ and $\na u$.
\item 
In \S\ref{sec: global existence} we prove the existence of weak solutions up to any finite time when $\beta>1$ and $\gamma>1$.
\end{itemize}

	\section{Upper and lower bounds for density $\rho$ and the large-time behaviour}\label{sec: rho bound}

We now establish the non-formation of vacuum or concentration, and deduce from it the large-time behaviour as stated in Theorem~\ref{thm: main}. Throughout this section, assume
\begin{equation}\label{3/2 assumption}
    \beta>\frac{3}{2}\qquad \text{and}\qquad 1<\gamma<4\beta-3.
\end{equation}
Also recall from Equation~\eqref{F} the quantity $F$ first introduced in Desjardins \cite{des2}: $$F:=2\mu \log\rho +\frac{\rho^{\beta}}{\beta}-(-\Delta)^{-1} \mathrm{div}(\rho u).$$

The strategy of the proof  follows Perepelitsa \cite{perep} in the large.

\subsection{Upper bound on density}

We first prove the non-formation of concentration of density.

Note that for $\rho>0$ one has $$P(\rho)\geq \gamma \log\rho +1.$$ We then have the differential inequality for $F^+:=\max\{F,0\}$:
      \begin{equation}
		\label{dF}
			\frac{\mathrm{D}F^{+}}{\mathrm{D}t}+\frac{\gamma}{2\mu}F^{+} \leq \frac{\gamma}{2\mu}\left\|(-\Delta)^{-1}\mathrm{div}(\rho u)\right\|_{L^{\infty}}+\frac{\gamma}{2\mu\beta} \rho^{\beta}+ \|G\|_{L^{\infty}}+\left|\bar{P}-\bar{B}\right|.
		\end{equation}
Here ${\rm D}/{\rm D}t$ is the material derivative.

The $\linf$-norm of the term $(-\Delta)^{-1}\mathrm{div}(\rho u)$ on the right-hand side can be controlled as follows (here the parameter $q \in ]2,\infty[$ is arbitrary):
\begin{align}\label{xxx, Nov24}
&\left\|(-\Delta)^{-1}\mathrm{div}(\rho u)\right\|_{L^{\infty}}\nonumber\\
&\qquad \leq C\left\|(-\Delta)^{-1}\nabla\mathrm{div}(\rho u)\right\|_{L^2}\Big\{\log(e+\|\rho u\|_{L^q})\Big\}+ C\left\|(-\Delta)^{-1}\mathrm{div}(\rho u)\right\|_{L^2}\nonumber\\
			&\qquad \leq C\|\rho u\|_{L^2}\Big\{\log(e+\|\rho u\|_{L^q})\Big\}^{\frac{1}{2}}+C\|\rho u\|_{L^2}\nonumber\\
			&\qquad \leq C\tilde{\rho}^{\frac{1}{2}}(t)\|\sqrt{\rho}u\|_{L^2} \left\{\log\left(e+\tilde{\rho}^{1-\frac{1}{q}}(t)\|\nabla u\|_{L^2}^{1-\frac{2}{q}}\left(\log (2+\|\nabla u\|_{L^2}^2)\right)^{\frac{1}{2}-\frac{1}{q}}\right)\right\}^{\frac{1}{2}}\nonumber\\
               &\qquad \leq C\tilde{\rho}^{\frac{1}{2}}(t)\|\sqrt{\rho}u\|_{L^2} \left\{\log\left(e+\tilde{\rho}^{1-\frac{1}{q}}(t)\|\nabla u\|_{L^2}^{1-\frac{2}{q}}\hatrho^{\,(\frac{1}{2}-\frac{1}{q})\varsigma}\right)\right\}^{\frac{1}{2}}\nonumber\\
                &\qquad \leq C(E_0)\tilde{\rho}^{\frac{1}{2}}(t) \left\{\log\left(e+\|\nabla u\|_{L^2}^{2}\right)\right\}^{\frac{1}{2}}+C(E_0)\hatrho^{\,1-\frac{1}{2q}+(\frac{1}{4}-\frac{1}{2q})\varsigma}(t)\nonumber\\
			&\qquad \leq C(E_0)\hatrho^{\,\alpha_4}(t).
\end{align}
In the above, the first line follows from the Brezis--Wainger inequality (Lemma~\ref{lemma: brezis-wainger}); in the second line we use the fact that the average of $\rho u$ is zero and the continuity of the Riesz transform; and the remaining lines hold by Lemma~\ref{rhou} and Proposition~\ref{propn: logY}. Here and hereafter, we set 	$$\alpha_4:=\max\left\{\frac{1}{2}+\frac{1}{2}\varsigma, 1-\frac{1}{2q}+\left(\frac{1}{4}-\frac{1}{2q}\right)\varsigma\right\}.$$

Moreover, thanks to the energy inequality in Lemma~\ref{lemma: energy ineq} and the definition of $B$ in \eqref{defB}, one has the following estimate for the right-most term in \eqref{dF}:
         \begin{align}
         \label{pb}
            \left|\bar{P}-\bar{B}\right|&\leq \int_{\mathbb{T}^2} \rho^{\gamma}\,\mathrm{d}x + \left\{\int_{\mathbb{T}^2} (\lambda+2\mu )\,\mathrm{d}x\right\}^{\frac{1}{2}}\left\{\int_{\mathbb{T}^2} (\lambda+2\mu)|\mathrm{div}u|^2 \,\mathrm{d}x\right\}^{\frac{1}{2}} \nonumber\\
            &\leq C(E_0) + \left\{\tilde{\rho}^{\max\left\{0,\frac{\beta-\gamma}{2}\right\}}(t) \left(\int_{\mathbb{T}^2} \rho^{\gamma}\,\mathrm{d}x \right)^{\frac{1}{2}} +\sqrt{2\mu}\right\} D(t)\nonumber\\
            &\leq C(E_0)+C(E_0,\mu)\left(\tilde{\rho}^{\max\left\{0,\frac{\beta-\gamma}{2}\right\}}(t)+1\right)D(t).
         \end{align}

Now, standard maximum principle arguments applied to the differential inequality~\eqref{dF} yields that
		\begin{equation*}
			\begin{aligned}
	\left\|F^{+}(t)\right\|_{L^{\infty}}\leq \,& \,e^{-\frac{\gamma}{2\mu}}\left\|F^{+}(0)\right\|_{L^{\infty}}+\frac{\gamma}{2\mu}\int_{0}^{t}e^{-\frac{\gamma}{2\mu}(t-\tau)}\left\|(-\Delta)^{-1}\mathrm{div}(\rho u)\right\|_{L^{\infty}}(\tau)\,\mathrm{d}\tau\\ &+\frac{\gamma}{2\mu\beta}\int_{0}^{t}e^{-\frac{\gamma}{2\mu}(t-\tau)}\rho^{\beta}(\tau)\,\mathrm{d}\tau
+\int_{0}^{t} e^{-\frac{\gamma}{2\mu}(t-\tau)}\|G\|_{L^{\infty}}(\tau)\,\mathrm{d}\tau\\
&+\int_{0}^{t}e^{-\frac{\gamma}{2\mu}(t-\tau)}\left|\bar{P}-\bar{B}\right|(\tau)\,\mathrm{d}\tau.
			\end{aligned}
		\end{equation*}
From Equation~\eqref{pb},  Proposition~\ref{propn: boundG}, and the definition of $F$ (Equation~\eqref{F}),  we then infer that
\begin{align}\label{hatrho}
\hatrho^{\,\beta}(t) & \leq e^{-\frac{\gamma}{2\mu}}\left\|F^{+}(0)\right\|_{L^{\infty}}+C\int_{0}^{t}e^{-\frac{\gamma}{2\mu}(t-\tau)}\hatrho^{\,\alpha_4}(\tau)\,\mathrm{d}\tau +C\int_{0}^{t}e^{-\frac{\gamma}{2\mu}(t-\tau)}\hatrho^{\,\beta}(\tau)\,\mathrm{d}\tau \nonumber\\
&\quad + C\int_{0}^{t} e^{-\frac{\gamma}{2\mu}(t-\tau)}\Bigg\{\hatrho^{\,\alpha_1}\left[D^{2-\frac{2}{q}} + D^{2-\frac{6}{q}}\right]+\hatrho ^{\alpha_2} \left[D^{2-\frac{2}{q}} + D^{2-\frac{6}{q}} \right]\left(\frac{X^2}{10+Y^2}\right)^{\frac{1}{q}} + \hatrho^{\,\alpha_3} D^{2-\frac{6}{q}}\Bigg\}\,\mathrm{d}\tau\nonumber\\
&\quad
+\int_{0}^{t}e^{-\frac{\gamma}{2\mu}(t-\tau)}\hatrho^{\,\max\left\{0,\frac{\beta-\gamma}{2}\right\}}(\tau)D(\tau)\,\mathrm{d}\tau+C\int_{0}^{t}e^{-\frac{\gamma}{2\mu}(t-\tau)}D(\tau)\,\mathrm{d}\tau,
\end{align}
with constants $C$ depending only on $\e$, $q$, $E_0$, $\gamma$, $\mu$, and $\beta$.

The second line in \eqref{hatrho} above can be controlled via the energy inequality (Lemma~\ref{lemma: energy ineq}), H\"{o}lder's inequality, and Proposition \ref{propn: logY}. Indeed, we have that
\begin{align*}
&\int_{0}^{t} e^{-\frac{\gamma}{2\mu}(t-\tau)}\hatrho^{\,\alpha_1}(\tau)D^{2-\frac{2}{q}}\,\mathrm{d}\tau\\ 
&\qquad \leq \hatrho^{\,\alpha_1}(t)\left\{\int_{0}^{t} e^{-\frac{\gamma}{2\mu}(t-\tau)}D^{2}\,\mathrm{d}\tau\right\}^{1-\frac{1}{q}}\left\{\int_{0}^{t} e^{-\frac{\gamma}{2\mu}(t-\tau)}\,\mathrm{d}\tau\right\}^{\frac{1}{q}}\\
&\qquad\leq C(E_0,\gamma,\mu,q)\hatrho^{\,\alpha_1}(t)
        \end{align*}      
        and that
\begin{align*}
&\int_{0}^{t} e^{-\frac{\gamma}{2\mu}(t-\tau)} \hatrho ^{\alpha_2}(\tau) D^{2-\frac{2}{q}}\left(\frac{X^2}{10+Y^2}\right)^{\frac{1}{q}}\,\mathrm{d}\tau \\
&\qquad \leq \hatrho^{\,\alpha_2}(t)\left\{\int_{0}^{t} e^{-\frac{\gamma}{2\mu}(t-\tau)}D^{2}\,\mathrm{d}\tau\right\}^{1-\frac{1}{q}}\left\{\int_{0}^{t} e^{-\frac{\gamma}{2\mu}(t-\tau)}\frac{X^2}{10+Y^2}\,\mathrm{d}\tau\right\}^{\frac{1}{q}}\\
& \qquad\leq C(E_0,\gamma,\mu,q) \hatrho^{\,\alpha_2+\frac{1}{q}\varsigma}(t).
        \end{align*}
Thus, one deduces from Equation~\eqref{hatrho} that
\begin{equation}
           \label{rho1}
			\begin{aligned}
				\hatrho^{\,\beta}(t)
               \leq & \,C+C\int_{0}^{t}e^{-\frac{\gamma}{2\mu}(t-\tau)}\hatrho^{\,\beta}(\tau)\,\mathrm{d}\tau+C\hatrho^{\,\alpha_1}(t)+C\hatrho^{\,\alpha_2+\frac{1}{q}\varsigma}(t)\\&+C\hatrho^{\,\alpha_3}(t)+C\hatrho^{\,\max\left\{0,\frac{\beta-\gamma}{2}\right\}}(t)+C\hatrho^{\,\alpha_4}(t),
		   \end{aligned}
		\end{equation}
with the constants $C=C(\e,q, E_0, \gamma, \mu, \beta)$.

By a standard Gr\"{o}nwall argument, the global-in-time bound for $\hatrho$ can be deduced from \eqref{rho1} only when the powers of $\hatrho$ on the right-hand side are strictly less than $\beta$ (except for the term $\hatrho^{\,\beta}$ under the integral sign). That is, we require  
\begin{equation}\label{beta, Nov24}
    \max\left\{\alpha_1,\alpha_2+\frac{1}{q}\varsigma,\alpha_3,\alpha_4 \right\}<\beta.
\end{equation}
In view of the definitions of $\alpha_i$, $i \in\{1,2,3,4\}$, this is possible (by choosing $q$ sufficiently large) if the disjunction of the following four conditions holds:
   \begin{itemize}
       \item $\gamma<\beta<\gamma+2$ and $\frac{3}{2}<\beta$;
       \item $\gamma+2<\beta$ and $1<\beta$;
       \item $\beta<\gamma<2\beta$ and $\frac{3}{2}<\beta$;
       \item $2\beta<\gamma$ and $\gamma<4\beta-3$
   \end{itemize}
which, in turn, is valid when the condition~\eqref{3/2 assumption} in the statement of Theorem~\ref{thm: main} holds.

Therefore, assuming the condition~\eqref{3/2 assumption}, we deduce from \eqref{rho1} that
        \begin{equation}
            \label{rho2}
				\hatrho^{\beta}(t)
				\leq C+C\int_{0}^{t}e^{-\frac{\gamma}{2\mu}(t-\tau)}\hatrho^{\,\beta}(\tau)\,\mathrm{d}\tau.
        \end{equation}
Hence, by Gr\"{o}nwall's inequality we have that $\hatrho^{\,\beta}(t)\leq C=C(\e,q, E_0, \gamma, \mu, \beta,M)$, where ${M} <\infty$ is the essential supremum of the initial density $\rho_0$. That is, there exists a time-independent positive constant $$\bar{M}=\bar{M}\Big(\e,q, E_0, \gamma, \mu, \beta, M\Big)$$ such that
	\begin{equation}
        \label{uprho}
	    \rho(t,x)\leq \bar{M}\qquad\text{for a.e. } (t,x)  \in ]0,+\infty[\times\mathbb{T}^2.
	\end{equation}
This proves the non-formation of concentration of density.

\subsection{Time-dependent lower bound on density}
\label{lower bound}
We now turn to the non-formation of vacuum, \emph{i.e.}, the lower bound on density away from zero, up to any finite time.

For this purpose, one considers the quantity 
\begin{equation}\label{theta, def}
\theta(\rho):= 2\mu \log\rho +\frac{\rho^{\beta}}{\beta},  
\end{equation}
which is closely related to $F$ defined in \eqref{F}.

The conservation of momentum expressed in terms of $F$, \emph{i.e.}, Equation~\eqref{eq1}, leads to the following differential inequality (\emph{cf.} Perepelitsa \cite{perep}):
	\begin{equation}
		\label{tra}
		\frac{\mathrm{D}}{\mathrm{D}t}\theta \geq \frac{\mathrm{D}}{\mathrm{D}t}\left[(-\Delta)^{-1}\mathrm{div}(\rho u)\right] +\bar{P}-\left|G+P+\bar{B}\right|.
	\end{equation}
We \emph{claim} that for any $T>0$,  \begin{align}
     \label{claim, June25}
 \int_{0}^{T} \left\{\bar{P}-|G+P+\bar{B}|\right\}(\tau, x)\,\mathrm{d}\tau \geq - C\left(\bar{M},q, E_0\right)-T.  
\end{align}	
To see this, by estimates for $G$ in Proposition~\ref{propn: boundG} and those for $\rho$ in Equation~\eqref{uprho}, we obtain for \emph{a.e.} $t\in [0,T]$ that
\begin{align*}
\sup_{x\in\mathbb{T}^2}|G(t,x)| \leq C(\bar{M})\left\{\left[D^{2-\frac{2}{q}} + D^{2-\frac{6}{q}}\right] + \left[D^{2-\frac{2}{q}} + D^{2-\frac{6}{q}} \right]\left(\frac{X^2}{10+Y^2}\right)^{\frac{1}{q}}+  D^{2-\frac{6}{q}}\right\}(t).    
\end{align*}
On the other hand, by virtue of $P=\rho^\gamma$ and the definition of $B$ in \eqref{defB}, we deduce from the upper bound~\eqref{uprho} for $\rho$ and the Cauchy--Schwarz inequality that
      $$\begin{aligned}
        \sup_{x\in \mathbb{T}^2}|P(t,x)|+\left|\bar{B}(t)\right|&\leq C+\left\{\int_{\mathbb{T}^2} (\lambda+2\mu )\,\mathrm{d}x\right\}^{\frac{1}{2}}\left\{\int_{\mathbb{T}^2} (\lambda+2\mu)|\mathrm{div}u|^2 \,\mathrm{d}x\right\}^{\frac{1}{2}}\\
        &\leq C\big(1+D(t)\big),
      \end{aligned}$$
where $C=C(\bar{M},\gamma,\beta,\mu)$.   Hence, by Young's inequality, 
    \begin{equation}
    \label{GPB}    \sup_{x\in\mathbb{T}^2}\left|G(t,x)+P(t,x)+\bar{B}(t)\right|\leq  C\left(\bar{M},q\right)\left\{1+D^2(t)+\frac{X^2(t)}{10+Y^2(t)}\right\}.
    \end{equation}
For \emph{a.e.} $t \in [0,T]$, it thus holds that 
\begin{align*}
 \int_{0}^{T} \left\{\bar{P}-\left|G+P+\bar{B}\right|\right\}(t, x)\,\mathrm{d}t &\geq -\int_{0}^{T} \left\{\sup_{x\in\mathbb{T}^2}\left|G+P+\bar{B}\right|\right\}(t,x)\,\mathrm{d}t\nonumber\\     
          &\geq -\int_{0}^{T} C\left(\bar{M},q\right)\left\{1+D^2(t) + \frac{X^2(t)}{10+Y^2(t)}\right\} \,\mathrm{d}t\nonumber\\
         &\geq - C\left(\bar{M},q, E_0\right)-T.
\end{align*}
The penultimate inequality follows from \eqref{GPB}, and the final line holds by the energy inequality in Lemma~\ref{lemma: energy ineq} and the estimate~\eqref{log Y bound} in the proof of Proposition~\ref{propn: logY}. This proves the \emph{claim}~\eqref{claim, June25}.

Integrating the differential inequality~\eqref{tra} over time, we deduce for almost every $t>0$ that 
\begin{align}\label{integrated diff ineq for theta}
\theta\big(\rho(t,x)\big)& \geq\theta\big(\rho_0(x)\big) -\left\|(-\Delta)^{-1}\mathrm{div}(\rho u)(t,\bullet)\right\|_{L^{\infty}}\nonumber \\
    &\qquad +\int_{0}^{t} \left\{\bar{P}-\left|G+P+\bar{B}\right|\right\}(\tau, x)\,\mathrm{d}\tau.
\end{align}
The third term on the right-hand side is controlled via  \eqref{claim, June25}. For the second term, via \eqref{xxx, Nov24} it is bounded from below by $-C(E_0)\hatrho^{\,\alpha_4}$ with $\alpha_4:=\max\left\{\frac{1}{2}+\frac{1}{2}\varsigma, 1-\frac{1}{2q}+\left(\frac{1}{4}-\frac{1}{2q}\right)\varsigma\right\}$, and by \eqref{beta, Nov24} we have $\alpha_4<\beta$ for suitably chosen $q$ under the assumption~\eqref{3/2 assumption}. Thus, in view of \eqref{uprho}, 
\begin{equation}\label{y, June25}
    \theta\big(\rho(t,x)\big) \geq\theta\big(\rho_0(x)\big) - C\Big(\e,q, E_0, \gamma, \mu, \beta,M\Big)-T.
\end{equation}

Thus, noting that $\theta(\rho)$ behaves like $\log\rho$ when $\rho$ becomes close to zero and that $\rho_0>m>0$ \emph{a.e.}, we conclude that
	\begin{equation}
        \label{lowrho}
		\rho(t,x)\geq \bar{m}\Big(\e,q, E_0, \gamma, \mu, \beta, M, m\Big)e^{-T} > 0 \qquad \text{for a.e. } (t,x)\in [0,T]\times \mathbb{T}^2.
	\end{equation}
This concludes the proof for the lower bound of $\rho$.

\begin{remark}\label{remark: M, m dependency}
In the derivation of the upper and lower bounds $\left(\bar{M}, \bar{m}\right)$ for density, \emph{i.e.}, Equations~\eqref{uprho} and \eqref{lowrho}, the parameter $\e$ enters via the variable $\varsigma$ (Proposition~\ref{propn: logY}). In addition, we have first chosen $\e$ suitably small and then $q$ suitably large to ensure the validity of \eqref{beta, Nov24}. Thus, we may view $\e$ and $q$ as fixed once and for all, and hence take $\bar{M}=\bar{M}(E_0, \gamma, \mu,\beta, M)$ and  $\bar{m}=\bar{m}(E_0, \gamma, \mu,\beta, M, m)$ from now on, where $M,m$ are the essential supremum and infimum of the initial density $\rho_0$, respectively.

\end{remark}

\subsection{Large-time behaviour}\label{subsec: large-time}
With the upper and lower bounds for $\rho$ at hand, we now establish the large-time behaviour of global weak solutions under the assumption~\eqref{3/2 assumption}, namely $\beta>\frac{3}{2}$ and $1<\gamma<4\beta-3$.

First, observe that  
\begin{equation}\label{X2, Nov24}\int_{0}^{\infty}X^2(t)\,\mathrm{d}t\leq C
\end{equation}
in light of Proposition ~\ref{propn: logY} and the upper bound~\eqref{uprho} for $\rho$. Thanks to Remark~\ref{remark: M, m dependency}, we may take $C=C(\mu,\beta,\gamma,E_0)$ here.  Notice too that \begin{equation}\label{D2, Nov24}\int_{0}^{\infty}D^2(t)\,\mathrm{d}t\leq C'
\end{equation}
by the definition of $D$ in \eqref{D, def} and the energy inequality in Lemma~\ref{lemma: energy ineq}; here $C'=C'(\gamma,E_0)$.

We next \emph{claim} that
	\begin{equation}
		\label{intPP}
\int_{0}^{\infty}\left\|\left(P-\bar{P}\right)(t,\bullet)\right\|_{L^{2}}^2\,\mathrm{d}t \leq  C\left(\gamma,\mu,\beta,E_0\right)
	\end{equation}
where $P=\rho^\gamma$. Hence,
\begin{align}\label{limP}
    \lim_{t\to\infty}\left\|\left(P-\bar{P}\right)(t,\bullet)\right\|_{L^{2}} = 0.
\end{align}  
To see this, we use the definition of $B$, $D$ (Equations~\eqref{defB} and \eqref{D, def}) and the upper bound for $\rho$ in~\eqref{uprho} to deduce that
\begin{align*}
\left\|\left(P-\bar{P}\right)(t,\bullet)\right\|_{L^2}^2  &\leq 2 \left\|B(t,\bullet)\right\|_{L^2}^2 + 2\left\|\left(\lambda(\rho)+2\mu\right){\rm div}u(t,\bullet)\right\|^2_{L^2}\\
&\leq 2 \left\|B(t,\bullet)\right\|_{L^2}^2 + C\left(\mu,\bar{M}\right)D^2(t).
\end{align*}
By \eqref{uprho} again and the estimate for $\|B\|_{H^1}$ in \eqref{BH1}, we have that 
\begin{align*}
 \left\|B(t,\bullet)\right\|_{L^2}^2 \leq C\left(\beta,\gamma,\mu,E_0,\bar{M}\right)\big(X^2(t)+D^2(t)\big).
\end{align*}
Hence,
\begin{align*}
\left\|\left(P-\bar{P}\right)(t,\bullet)\right\|_{L^2}^2 &\leq  C\left(\beta,\gamma,\mu,E_0,\bar{M}\right)\big(X^2(t)+D^2(t)\big).
\end{align*}
The \emph{claim}~\eqref{intPP} follows from integrating the above inequality over $t \in [0,\infty[$ and invoking \eqref{X2, Nov24} and \eqref{D2, Nov24}. See Remark~\ref{remark: M, m dependency} for the dependency of parameters.

On the other hand, as in \cite[p.152]{H1}, we deduce from Equation~\eqref{x, June25} for $P$,  Cauchy--Schwarz, the \emph{claim}~\eqref{intPP}, as well as the bounds in \eqref{X2, Nov24}, \eqref{D2, Nov24}, and $\|{\rm div}u\|_{L^2}\leq C(\mu)D$ that
\begin{align*}
\int_0^\infty \left|\frac{\dd \bar{P}}{\dd t}\right|\,\dd t &\leq C(\gamma)\int_0^\infty\left|\int_\tor \left(P-\bar{P}\right){\rm div} u\,\dd x\right|\,\dd t\\
&\leq C(\gamma,\mu)\int_0^\infty\left\{ \left\|\left(P-\bar{P}\right)(t,\bullet)\right\|_{L^2}^2 + D(t)^2 \right\}\,\dd t\\
&\leq C(\mu,\beta,\gamma,E_0).
\end{align*}
This leads to $\bar{P} \in \dot{W}^{1,1}(\R_+)$ and hence $$\lim_{t \to \infty}\bar{P}(t) = a^\gamma$$ for some positive number $a$, since   at any time $t$ one has by Jensen's inequality that $$\bar{P}(t) \geq \left(\int_\tor\rho(t,x)\,\dd x\right)^\gamma \equiv \overline{\rho_0}^\gamma>0.$$ 
On the other hand, as $\lim_{t\to\infty}(P-\bar{P})(t,\bullet) \to 0$ in $L^2(\tor)$, along a sequence $\{t_j\}\nearrow\infty$ we have $P(t_j,\bullet) \to a^\gamma$ \emph{a.e.} and hence $\rho(t_j,\bullet) \to a$ on $\tor$. Integrating over $\tor$ and applying the dominated convergence theorem thanks to the upper bound~\eqref{uprho} for $\rho$, we find that $a=\overline{\rho_0}$.

In view of Equation~\eqref{limP},  the upper bound~\eqref{uprho} for $\rho$, and the previous paragraph, we conclude that 
\begin{equation}\label{Lp conv for rho}   \lim_{t\rightarrow\infty}\left\|\rho(t,\bullet)-\overline{\rho_0}\right\|_{L^p}=0,\qquad  \text{for any } 1\leq p<\infty.
\end{equation}
Recall from the equation ensuing~\eqref{mean} the normalisation $\overline{\rho_0}=1$.

Finally, in view of the energy inequality in Lemma~\ref{lemma: energy ineq}, we have $\int_0^{\infty}D^2(t)\,\mathrm{d}t\leq C(E_0)$ and hence  $\lim\limits_{t\rightarrow\infty} D^2(t)=0$. Thus, in view of the definition of $D$ in \eqref{D, def}, one concludes that
	\begin{align}
		\label{limu}	\lim_{t\rightarrow\infty}\|\nabla u(t,\bullet)\|_{L^2} &\leq \lim_{t\rightarrow\infty}(\|\mathrm{div}\,u(t,\bullet)\|_{L^2}+\|\mathrm{rot}\,u(t,\bullet)\|_{L^2}) \nonumber\\
        &\leq C\lim_{t\rightarrow\infty}D^2(t)=0.
	\end{align}

 \section{Global existence of weak solutions for $\beta>1$}\label{sec: global existence}

In this final section, we prove the existence of weak solutions of the Navier--Stokes Equation~\eqref{equ} for compressible barotropic fluids up to any finite time $T \in ]0,\infty[$, assuming only 
\begin{align*}
    \beta>1\quad\text{ and }\quad \gamma >1
\end{align*}
and that the initial density is away from both vacuum and concentration. As in \cite{vaigant, H1, perep}, the key is to establish the (in this case, time-dependent) upper and lower bounds for the density $\rho$ up to time $T$. The main ingredients of the proof are as follows:
\begin{enumerate}
    \item 
The time-dependent density upper bound in $L^p$-norm for any $p \in [1,\infty[$; \emph{cf.} Va\u{\i}gant--Kazhikhov, \cite[p.1119, Equation~(36)]{vaigant}: 
 \begin{lemma}
    \label{lem:rhoLp}
    Let $\beta>1$ and $1\leq p<\infty$ be arbitrary. There is a constant $C$ depending only on $T$, $\mu$, $\beta$, $\gamma$, $\|\rho_0\|_{L^{\infty}}$, and $\|u_0\|_{L^{\infty}}$, such that
    $$\sup_{t\in [0,T]}\|\rho(t,\bullet)\|_{L^p}\leq C p^{\frac{2}{\beta-1}}.$$
 \end{lemma}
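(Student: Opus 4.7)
The plan is the standard energy-method approach via the continuity equation. Multiplying $\partial_t\rho+\mathrm{div}(\rho u)=0$ by $\rho^{p-1}$ and integrating over $\mathbb{T}^2$ yields
\begin{equation*}
\frac{d}{dt}\int_{\mathbb{T}^2}\rho^p\,dx = -(p-1)\int_{\mathbb{T}^2}\rho^p\,\mathrm{div}\,u\,dx.
\end{equation*}
Substituting $\mathrm{div}\,u=(B+P-\bar{P})/(\rho^\beta+2\mu)$ from \eqref{defB} and isolating the sign-favourable pressure self-interaction produces
\begin{equation*}
\frac{d}{dt}\int\rho^p\,dx + (p-1)\int\frac{\rho^{p+\gamma}}{\rho^\beta+2\mu}\,dx \leq (p-1)\int\frac{\rho^p|B|}{\rho^\beta+2\mu}\,dx + (p-1)\bar{P}\int\frac{\rho^p}{\rho^\beta+2\mu}\,dx.
\end{equation*}

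The $\bar{P}$-term is easy: Lemma~\ref{lemma: energy ineq} bounds $\bar{P}$ uniformly, and the pointwise inequality $\rho^p/(\rho^\beta+2\mu)\leq 1+\rho^p$ (valid for $\beta>1$) reduces it to a linear source in $\|\rho\|_{L^p}^p$. For the delicate $B$-term, I would apply Young's inequality in the form
\begin{equation*}
\frac{\rho^p|B|}{\rho^\beta+2\mu} \leq \frac{\varepsilon}{2}\cdot\frac{\rho^{p+\gamma}}{\rho^\beta+2\mu} + \frac{1}{2\varepsilon}\cdot\frac{\rho^{p-\gamma}|B|^2}{\rho^\beta+2\mu}
\end{equation*}
so as to absorb a small fraction of it into the pressure-dissipation on the left-hand side. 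The remainder $\int \rho^{p-\gamma}|B|^2/(\rho^\beta+2\mu)\,dx$ is then controlled by H\"older's inequality together with the Gagliardo--Nirenberg estimate Lemma~\ref{1} applied to $B$ and the $H^1$-bound \eqref{BH1}. The crucial feature is that the constant in Lemma~\ref{1} scales precisely as $\sqrt{q}$, and this $q$-dependence must be tracked faithfully throughout.

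After these manipulations, one arrives at a closed differential inequality of the schematic form
\begin{equation*}
\frac{d}{dt}\|\rho(t,\bullet)\|_{L^p}^p \leq C(T)\,p^{\sigma}\,\|\rho(t,\bullet)\|_{L^p}^p + C(T)^p\,p^{\sigma'},
\end{equation*}
and Gr\"onwall's lemma, followed by extracting the $p^{\text{th}}$ root, yields $\|\rho(t,\bullet)\|_{L^p}\leq C\,p^{2/(\beta-1)}$. The main obstacle is the careful bookkeeping of the $p$-dependent constants at each step: every application of Gagliardo--Nirenberg introduces a factor $\sqrt{p}$, while the pressure dissipation $\rho^{p+\gamma}/(\rho^\beta+2\mu)\sim\rho^{p+\gamma-\beta}$ provides effective coercivity governed by the relative sizes of $\gamma$ (pressure) and $\beta$ (viscosity). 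The sharp exponent $2/(\beta-1)$ emerges from the balance of these two competing scalings and matches the classical Va\u{\i}gant--Kazhikhov bound \cite[p.~1119, Equation~(36)]{vaigant}.
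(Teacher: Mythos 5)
The paper does not prove this lemma at all --- it is quoted verbatim from Va\u{\i}gant--Kazhikhov \cite[p.~1119, Eq.~(36)]{vaigant} --- so your proposal has to stand on its own. Your opening moves (renormalising the continuity equation by $\rho^{p-1}$, substituting $\mathrm{div}\,u=(B+P-\bar P)/(\rho^\beta+2\mu)$, and absorbing a fraction of the pressure self-interaction by Young) are indeed the right starting point and match the spirit of the V--K argument. The gap is in how you close the $B$-term. You propose to control $\int \rho^{p-\gamma}|B|^2/(\rho^\beta+2\mu)\,dx$ via Lemma~\ref{1} together with ``the $H^1$-bound \eqref{BH1}''. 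But \eqref{BH1} bounds $\|B\|_{H^1}$ by $\tilde\rho^{1/2}X+\tilde\rho^{\max\{0,(\beta-\gamma)/2\}}D$, and the time-integrability of $X^2$ is only obtained from \eqref{log Y bound}, which itself requires an a priori bound on $\hatrho$. Lemma~\ref{lem:rhoLp} sits \emph{upstream} of all of that: it is invoked in Step~1 of the proof of Proposition~\ref{propn: density bound, beta>1} precisely in order to remove the powers of $\tilde\rho$ from those estimates. Using \eqref{BH1} here is therefore circular. At this stage the only admissible inputs are the energy inequality (Lemma~\ref{lemma: energy ineq}) and $L^{p'}$-bounds on $\rho$ for \emph{smaller} exponents $p'$; consequently the argument cannot be a single closed inequality in $p$ but must be an induction on $p$ in increments of size $\sim\beta-1$ (each step trading one application of the div-curl/Gagliardo--Nirenberg machinery, which costs a factor $p^2$, for a gain of $\beta-1$ in the integrability exponent, starting from the base case $\int\rho^\gamma\le C(E_0)$). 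Your H\"older step already betrays this: $\|\rho^{p-\gamma}\|_{L^r}$ involves $\|\rho\|_{L^{r(p-\gamma)}}$ with $r(p-\gamma)$ in general different from $p$, and you never explain how the exponents close.

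Relatedly, your schematic endgame cannot produce the stated conclusion. From $\frac{d}{dt}\|\rho\|_{L^p}^p\le C(T)p^\sigma\|\rho\|_{L^p}^p+C(T)^p p^{\sigma'}$, Gr\"onwall and the $p$-th root give $\|\rho\|_{L^p}\le e^{C(T)p^{\sigma-1}T}\cdot C(T)\,(p^{\sigma'}T)^{1/p}$; since $(p^{\sigma'})^{1/p}\to 1$, this yields either a bound \emph{uniform in $p$} (if $\sigma\le1$) --- i.e.\ an $L^\infty$ bound on $\rho$, which is far too strong and would trivialise the rest of the paper --- or a super-polynomial bound in $p$ (if $\sigma>1$), which is too weak. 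The exponent $p^{2/(\beta-1)}$ is not reachable from an inequality of that shape: it arises as $\big(\prod_{j}Cj^2\big)^{1/p}$ with roughly $p/(\beta-1)$ factors accumulated along the induction described above, so the $p$-dependence must sit \emph{inside} the $p$-th power of the source, not outside it. In short, the decomposition is right but the two essential ingredients --- a non-circular control of $B$ using only the energy inequality and lower-order density bounds, and the iteration in $p$ that actually generates the exponent $2/(\beta-1)$ --- are missing.
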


 \item 
The time-dependent integral bound for $\rho|u|^{2+\nu}$; \emph{cf.} Huang--Li \cite[Equation~(3.50) in the proof of Lemma~3.7]{H1}:
 \begin{lemma}
 \label{lem:rhonu}
Assume $\beta>1$. There is a positive constant $\nu_0\leq \frac{1}{2}$ depending only on $\mu$ such that
     $$\sup_{t\in [0,T]}\int_{\mathbb{T}^2}\rho|u|^{2+\nu}\,\mathrm{d}x \leq C,$$
     where $\nu=\hatrho(T)^{-\frac{\beta}{2}}\nu_0$. Here the constant $C$ depends only on $T$, $\mu$, $\beta$, $\gamma$, $\|\rho_0\|_{L^{\infty}}$, and $\|u_0\|_{L^{\infty}}$ as in Lemma~\ref{lem:rhoLp} above.
 \end{lemma}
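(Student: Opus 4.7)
The plan is to test the momentum equation in \eqref{equ} by $(2+\nu)|u|^{\nu}u$ and combine it with the continuity equation to obtain the energy-type identity
\begin{align*}
\frac{\dd}{\dd t}\int_{\tor}\rho|u|^{2+\nu}\,\dd x = (2+\nu)\int_{\tor}|u|^{\nu}u\cdot\Big[\nabla\big((\lambda+\mu)\mathrm{div}\,u\big) + \mu\Delta u - \nabla P\Big]\,\dd x.
\end{align*}
After integration by parts and use of the identities $\nabla(|u|^{\nu}u_j) = |u|^{\nu}\nabla u_j + \nu|u|^{\nu-2}u_j u_k\nabla u_k$ and $\mathrm{div}(|u|^{\nu}u) = |u|^\nu \mathrm{div}\,u + \nu|u|^{\nu-2}u_j u_k \partial_j u_k$, the viscous contribution yields the coercive dissipation
\begin{align*}
\mathcal{I}_{\rm diss}(t) := (2+\nu)\int_{\tor}\Big\{\mu|u|^{\nu}|\nabla u|^2 + (\lambda+\mu)|u|^{\nu}|\mathrm{div}\,u|^2\Big\}\,\dd x,
\end{align*}
plus a sign-definite (positive) remainder coming from the Laplacian part and a single sign-indefinite cross term whose pointwise absolute value is bounded by $\nu(\lambda+\mu)|u|^{\nu}|\nabla u|\,|\mathrm{div}\,u|$.

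The crux is the treatment of this cross term. A single application of Young's inequality gives the upper bound $\frac{\mu}{2}|u|^\nu|\nabla u|^2 + C\mu^{-1}\nu^2(\lambda+\mu)^2|u|^\nu|\mathrm{div}\,u|^2$; to absorb the last piece back into $(\lambda+\mu)|u|^{\nu}|\mathrm{div}\,u|^2$ inside $\mathcal{I}_{\rm diss}$, I need $\nu^2(\lambda+\mu)/\mu \lesssim 1$ uniformly on $[0,T]\times\tor$. Since $\lambda = \rho^\beta \leq \hatrho(T)^\beta$, this forces the scaling $\nu = \hatrho(T)^{-\beta/2}\nu_0$ with $\nu_0 = \nu_0(\mu)$ chosen sufficiently small --- precisely the threshold asserted in the statement of the lemma.

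With $\tfrac{1}{2}\mathcal{I}_{\rm diss}$ now sitting on the left-hand side of the evolution identity, I turn to the pressure term. Rewriting $-(2+\nu)\int_{\tor} |u|^\nu u\cdot\nabla P\,\dd x = (2+\nu)\int_{\tor} P\, \mathrm{div}(|u|^\nu u)\,\dd x$, splitting $\rho^\gamma = \rho^{\gamma-\beta/2}\cdot\rho^{\beta/2}$, and applying Young yields
\begin{align*}
\Big|(2+\nu)\int_{\tor} P\,\mathrm{div}(|u|^\nu u)\,\dd x\Big| \leq \frac{1}{4}\mathcal{I}_{\rm diss} + C\int_{\tor}\rho^{2\gamma-\beta}|u|^{\nu}\,\dd x.
\end{align*}
For the residual, factoring $\rho^{2\gamma-\beta}|u|^\nu = \rho^{2\gamma-\beta-\nu/(2+\nu)}\cdot(\rho|u|^{2+\nu})^{\nu/(2+\nu)}$ and applying H\"{o}lder with conjugate exponents $\tfrac{2+\nu}{2}$ and $\tfrac{2+\nu}{\nu}$ produces
\begin{align*}
\int_{\tor}\rho^{2\gamma-\beta}|u|^{\nu}\,\dd x \leq \|\rho\|_{L^{p(\nu)}}^{q(\nu)}\cdot\Big(\int_{\tor}\rho|u|^{2+\nu}\,\dd x\Big)^{\nu/(2+\nu)}
\end{align*}
for suitable exponents $p(\nu), q(\nu)$ depending on $\beta$ and $\gamma$; the first factor is finite (and in fact bounded polynomially in $T$) by Lemma~\ref{lem:rhoLp}.

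Setting $\mathcal{M}(t):=\int_{\tor}\rho|u|^{2+\nu}\,\dd x$, the upshot is a differential inequality of sub-linear shape $\mathcal{M}'(t) \leq C(T) + C(T)\mathcal{M}(t)^{\nu/(2+\nu)}$. Since $\nu/(2+\nu)<1$, direct integration combined with the crude initial bound $\mathcal{M}(0)\leq\|\rho_0\|_\linf\cdot\|u_0\|_\linf^{2+\nu}\cdot|\tor|$ yields the claim. The chief obstacle lies in the delicate interplay between the smallness of $\nu$ (needed for the coercivity of $\mathcal{I}_{\rm diss}$ after absorbing the cross term involving $\rho^\beta$) and the Lebesgue exponent $p(\nu)$ in Lemma~\ref{lem:rhoLp}, which must be chosen large enough to dominate the $\rho^{2\gamma-\beta}$-power yet still produce a finite bound via $\|\rho\|_{L^{p}} \lesssim p^{2/(\beta-1)}$. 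The scaling $\nu\propto\hatrho(T)^{-\beta/2}$ is dictated entirely by the dissipation analysis, while the hypothesis $\beta>1$ is what makes Lemma~\ref{lem:rhoLp} available in the first place.
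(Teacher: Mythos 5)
The paper does not prove this lemma itself --- it cites Huang--Li \cite[Lemma~3.7, Eq.~(3.50)]{H1} --- and your proposal reconstructs precisely that argument: testing the momentum equation with $(2+\nu)|u|^{\nu}u$, absorbing the cross term $\nu(\lambda+\mu)|u|^{\nu}|\nabla u|\,|\mathrm{div}\,u|$ into the dissipation (which is exactly what forces $\nu^{2}\hatrho^{\beta}\lesssim\mu$, i.e.\ $\nu=\nu_{0}\hatrho(T)^{-\beta/2}$ with $\nu_0=\nu_0(\mu)$), and closing the pressure term by H\"{o}lder against $\int_{\tor}\rho|u|^{2+\nu}\,\dd x$ using the $L^{p}$ density bound of Lemma~\ref{lem:rhoLp}. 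So the approach is correct and the same as the cited proof. One point to repair: your residual $\int_{\tor}\rho^{2\gamma-\beta}|u|^{\nu}\,\dd x$ carries a negative power of $\rho$ whenever $\beta>2\gamma$ (which the hypotheses $\beta>1$, $\gamma>1$ permit), and no quantitative lower bound on $\rho$ is available at this stage of the argument; instead estimate $(\lambda+2\mu)^{-1}\leq(2\mu)^{-1}$ so the residual becomes $C\mu^{-1}\int_{\tor}\rho^{2\gamma}|u|^{\nu}\,\dd x$, after which your H\"{o}lder factorisation (the exponent on $\rho$ being $\gamma(2+\nu)-\nu/2$, finite and controlled by Lemma~\ref{lem:rhoLp}) and the sublinear Gr\"{o}nwall step go through unchanged.
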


 \item 
The triviality of the limiting measure $\Psi:=\bra \rho^2\ket-\rho^2$; \emph{cf}. Va\u{\i}gant--Kazhikhov \cite[pp.1136--1139]{vaigant} and P.-L. Lions \cite{lions}. Here and hereafter, we  introduce:
\begin{notation}
For a continuous function $f: [0,M]\to\R$ for some finite number $M$, denote by $\bra f(\rho) \ket$ the weak-$\star$ limit of $f\rhod$ as $\delta \to 0^+$, where 
\begin{align*}
\rho^\delta \weak \rho \quad\text{weakly-$\star$ in 
 }  \linf\big(0,T;\linf(\tor)\big). 
\end{align*}

\end{notation}
\end{enumerate}

\subsection{Time-dependent pointwise bound on density}

For the above purpose, we first prove that under the general assumption $\beta>1$ and $\gamma>1$, the density $\rho$ stays away from vacuum and concentration up to any finite time, providing that the initial density $\rho_0$ does so. At the moment, we are unable to obtain the large-time behaviour of $\rho$ and $u$ in this case.

The main result of this subsection is the following:
 \begin{proposition}\label{propn: density bound, beta>1}
Assume $\beta>1$ and $\gamma>1$. For any $T\in ]0,\infty[$, there exist  positive constants $\bar{M}(T)$ and $\bar{m}(T)$ depending only on $T$, $\mu$, $\gamma$, $\beta$, $E_0$, $M$, and $m$, such that
     \begin{equation*}
         0<\bar{m}(T)<\rho(t,x)<\bar{M}(T)<\infty\qquad\text{ for almost all } (t,x)\in [0,T]\times \mathbb{T}^2,
     \end{equation*}
provided that $(\rho, u)$ is a strong solution for the barotropic Navier--Stokes Equation~\eqref{equ}. As before, $M<\infty$ and $m>0$ are the essential supremum and infimum of the initial density $\rho_0$.
 \end{proposition}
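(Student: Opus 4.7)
The plan is to follow the architecture of Section~\ref{sec: rho bound}, but to replace the uniform-in-time inputs by the time-dependent controls provided by Lemmas~\ref{lem:rhoLp} and \ref{lem:rhonu} precisely at those points where the constraint~\eqref{beta, Nov24} obstructs closing the uniform argument in the regime $1 < \beta \leq 3/2$. For the upper bound, I would revisit the maximum-principle argument applied to $F^+$ via equation~\eqref{dF} and its integrated form~\eqref{hatrho}. The crucial modification concerns the estimate of $\|\rho u\|_{L^q}$ appearing through Proposition~\ref{propn: boundG}: rather than the crude splitting $\|\rho u\|_{L^q}\leq \trho^{1-1/q}\|\rho^{1/q}u\|_{L^q}$ which produces a super-$\beta$ power of $\hatrho$, I would use a H\"older interpolation of the form
\begin{equation*}
\|\rho u\|_{L^q} \leq \|\rho\|_{L^{a}}^{\theta}\,\bigl\|\rho^{1/(2+\nu)}u\bigr\|_{L^{2+\nu}}^{1-\theta},
\end{equation*}
with exponents $a,\theta$ chosen so that $a<\infty$. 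The first factor is controlled by Lemma~\ref{lem:rhoLp} --- the $p^{2/(\beta-1)}$ growth being harmless on any finite time interval --- and the second by Lemma~\ref{lem:rhonu}. Substituting into the chain of estimates culminating in~\eqref{rho1} replaces the dangerous powers $\hatrho^{\alpha_i}$ by $T$-dependent constants, and Gr\"onwall's lemma then delivers $\hatrho(T)\leq \bar M(T) < \infty$.

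For the lower bound, I would repeat the argument of \S\ref{lower bound} essentially verbatim, now with the upper bound $\rho \leq \bar M(T)$ already in hand. Every constant in the passage from~\eqref{tra} to~\eqref{lowrho} becomes $T$-dependent but finite, the bound~\eqref{GPB} being available with $C=C(\bar M(T),q)$. Since $\theta(\rho) = 2\mu\log\rho + \rho^\beta/\beta \to -\infty$ only as $\rho \to 0^+$, and since the initial density satisfies $\rho_0\geq m>0$ essentially, one concludes $\rho(t,x)\geq \bar m(T)>0$ for almost every $(t,x)\in [0,T]\times\tor$.

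The main obstacle is the circular dependency in the upper-bound step: the exponent $\nu = \hatrho(T)^{-\beta/2}\nu_0$ appearing in Lemma~\ref{lem:rhonu} itself involves the quantity one aims to bound, and the constant produced by Lemma~\ref{lem:rhoLp} grows like $p^{2/(\beta-1)}$, which may be very large when $\beta$ is close to~$1$. I would resolve this by a standard continuity-bootstrap: the map $T\mapsto \hatrho(T)$ is continuous on any interval on which the strong solution exists and equals $M$ at $T=0$; the Gr\"onwall constants depend at most polynomially on $\hatrho(T)$ through $\nu$ and through the moment exponent, so the set of $T$ for which $\hatrho(T)$ is finite is both open and closed in $[0,\infty[$, precluding any finite-time blow-up of $\hatrho$. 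Finally, the triviality of the limiting measure $\langle\rho^2\rangle-\rho^2$ (ingredient~(3)) is what transfers these pointwise bounds from the approximate/strong-solution level to the weak solution obtained in the limit.
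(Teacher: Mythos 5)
Your strategy is the paper's: rerun the $F^{+}$ maximum-principle/Gr\"onwall scheme of \S\ref{sec: rho bound}, use Lemma~\ref{lem:rhoLp} to convert the pressure-induced powers of $\tilde{\rho}$ into $T$-dependent constants (the paper's Step~1, yielding \eqref{Lp2}--\eqref{logY2}), and --- the key point you correctly isolate --- bound $\|\rho u\|$ by H\"older between a high but finite $L^{a}$-norm of $\rho$ (Lemma~\ref{lem:rhoLp}) and $\|\rho^{1/(2+\nu)}u\|_{L^{2+\nu}}$ (Lemma~\ref{lem:rhonu}) instead of invoking Desjardins' inequality. The lower bound is then obtained exactly as you describe, by rerunning \S\ref{lower bound} with $T$-dependent constants.

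There is, however, one step that fails as written. H\"older gives $\|\rho u\|_{L^{q_1}}\le \|\rho^{(1+\nu)/(2+\nu)}\|_{L^{r}}\,\|\rho^{1/(2+\nu)}u\|_{L^{2+\nu}}$ only with $1/q_1=1/r+1/(2+\nu)$ and $r<\infty$, hence only for $q_1<2+\nu\le 5/2$; no inequality of the form you propose can hold for the exponents $q>4$ on which Proposition~\ref{propn: boundG} is built (its interpolation $\|G\|_{L^\infty}\lesssim\|\nabla G\|_{L^{4q/(q+4)}}^{4/q}\|G\|_{L^{q}}^{1-4/q}$ forces $4q/(q+4)>2$, i.e.\ $q>4$, and one cannot dominate $\|\rho u\|_{L^q}$ for $q>2+\nu$ by a finite $L^{a}$-norm of $\rho$ times $\|\rho^{1/(2+\nu)}u\|_{L^{2+\nu}}$). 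So you cannot simply re-estimate $\|\rho u\|_{L^q}$ inside Proposition~\ref{propn: boundG}: the paper re-derives the $L^\infty$-bound for $G$ with a different pair of exponents, taking $\|G\|_{L^{q_1}}$ with $q_1=\frac{4q+1}{2q}$ just above $2$ (and $q>(2\nu)^{-1}$ so that the conjugate exponent $\frac{(1+\nu)(4q+1)}{2q\nu-1}$ is finite) and compensating with $\|\nabla G\|_{L^{q_1(2+\alpha)/(q_1+\alpha+2)}}$, which in turn requires the auxiliary bound \eqref{Lp2} on $\|\nabla u\|_{L^{2+\alpha}}$; see \eqref{upG2}--\eqref{f1}. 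The payoff is that the surviving power of $\hatrho$ in the Gr\"onwall inequality is $\varepsilon+2/q_1=\varepsilon+\frac{4q}{4q+1}<1<\beta$, which is exactly what closes the argument for every $\beta>1$. Finally, your continuity-bootstrap remark addresses a genuine circularity ($\nu$, hence the admissible range of $q$, depends on $\hatrho(T)$) that the paper leaves implicit; some such continuation argument is indeed needed to legitimise the choice of exponents, though to run it one must also track how the constants depend on $q\sim\nu^{-1}$, which neither your sketch nor the paper does explicitly.
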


 \begin{proof}[Proof of Proposition~\ref{propn: density bound, beta>1}]

We divide our arguments into five steps below.

\smallskip
\noindent
{\bf Step~1.} Recall the estimates for $\|\na u\|_{L^q}$ and $\log Y$ established respectively in Lemma~\ref{lem: Lp for nabla u} and Proposition~\ref{propn: logY}, reproduced below for convenience of the reader:
	\begin{equation*}
			\begin{aligned}
				\|\nabla u\|_{L^q}&\leq C' \tilde{\rho} ^{\frac{\beta\varepsilon}{2}+\max\left\{0,\frac{\gamma-\beta}{2}\right\}\cdot\frac{2}{q}+\max\left\{0,\frac{\beta-\gamma}{2}\right\}\cdot(1-\frac{2}{q})}(t)\big[1+D(t)\big]\\
	  &\qquad+ C'\tilde{\rho} ^{\frac{\beta\varepsilon}{2}+\frac{1}{2}-\frac{1}{q}+\max\left\{0,\frac{\gamma-\beta}{2}\right\}}(t)\big[1+D(t)\big]\left(\frac{X^2(t)}{10+Y^2(t)}\right)^{\frac{1}{2}-\frac{1}{q}}\\
      &\qquad + C' \tilde{\rho}^{\max\left\{0,\frac{q-1}{q}\gamma-\beta\right\}}(t)		
			\end{aligned}
		\end{equation*}
and 
\begin{equation*}
    \log\Big(10 + \|\na u\|^2_{L^2} \Big)\leq C'\hatrho^{\,1+\beta\varepsilon+\max\left\{0, \gamma-2\beta,\beta-\gamma-2\right\}}.
\end{equation*}
Here $q>2$ and $0<\varepsilon<1$ are arbitrary, and the constants $C'=C'(\varepsilon, \mu, \beta, \gamma, M, E_0)$. 

We \emph{claim} that, in the above estimates, all the powers of $\tilde{\rho}$ and $\hatrho$ of the form $\max\{0, \cdots\}$ can be dropped, at the cost of allowing all the constants depend additionally on $T$. That is, we have the estimates as follows:
\begin{equation}
    \label{Lp2}
		\begin{aligned}
				\|\nabla u\|_{L^q}&\leq C \tilde{\rho} ^{\frac{\beta\varepsilon}{2}}(t)\big[1+D(t)\big]+ C\tilde{\rho} ^{\frac{\beta\varepsilon}{2}+\frac{1}{2}-\frac{1}{q}}(t)\big[1+D(t)\big]\left(\frac{X^2(t)}{10+Y^2(t)}\right)^{\frac{1}{2}-\frac{1}{q}}+C
			\end{aligned}
  \end{equation}
  and 
  \begin{equation}
   \label{logY2}   \log\big(10+Y^2(t)\big)+\int_{0}^{t} \frac{X^2(\tau)}{10+Y^2(\tau)}\,\mathrm{d}\tau\leq C \hatrho^{\,1+\beta\varepsilon}(t)\qquad \text{for a.e. } t \in [0,T],
\end{equation}
where $q>2$ and $0<\varepsilon<1$ are arbitrary, and $C=C(T,\varepsilon, \mu, \beta, \gamma, M, E_0)$.

Indeed, an examination of the proofs for Lemma~\ref{lem: Lp for nabla u} and Proposition~\ref{propn: logY} reveals that powers of $\tilde{\rho}$ and $\hatrho$ of the form $\max\{0, \cdots\}$ arise from two inequalities, \eqref{NOV24, to use} and \eqref{NOV24, to use2} reproduced below:
\begin{eqnarray*}
&&\left\|\frac{P-\bar{P}}{\lambda+2\mu}\right\|_{L^q}  \leq C(q, E_0)\tilde{\rho}^{\max\left\{0,\frac{q-1}{q}\gamma-\beta\right\}}(t)\qquad\text{for any }q>2,\\
&&   \left\|\frac{P-\bar{P}}{(\lambda+2\mu)^{3/2}}\right\|_{L^{2+\frac{2\beta}{\gamma}}}\leq C(q, E_0)\tilde{\rho}^{\max\left\{0,\frac{1}{2}\gamma-\beta\right\}}(t).
\end{eqnarray*}
Nevertheless, in view of Lemma~\ref{lem:rhoLp} (Va\u{\i}gant--Kazhikhov, \cite[p.1119, Equation~(36)]{vaigant}) and that $P=\rho^\gamma$, we may instead bound $\left\|\frac{P-\bar{P}}{\lambda+2\mu}\right\|_{L^q} $ and $\left\|\frac{P-\bar{P}}{(\lambda+2\mu)^{{3}/{2}}}\right\|_{L^{2+\frac{2\beta}{\gamma}}}$ by a time-dependent constant $C(T,q,E_0,\gamma,\mu,\beta)$. This verifies Equations~\eqref{Lp2} and \eqref{logY2}.

\smallskip
\noindent
{\bf Step~2.} Next we give a time-dependent estimate for $\|G\|_\linf$, where $G$ is the commutator term as in Equation~\eqref{G, def}:
\begin{align*}
    G = \sum_{i,j \in \{1,2\}}    \big[u_i,(-\Delta)^{-1}\partial_i\partial_j\big](\rho u_j) \equiv \sum_{i,j \in \{1,2\}} \big[u_i, R_iR_j\big](\rho u_j).
\end{align*} 

To this end, for any $q_1 \in]2,\infty[$ (to be specified) and $\alpha >\frac{4}{q_1-2}$, by using the Gagliardo--Nirenberg--Sobolev interpolation inequality, the commutator estimates in Lemma~\ref{commu} (\emph{cf}. Coifman--Rochberg--Weiss\cite{Coifman} and Coifman--Meyer \cite{Coifman1}), as well as the embedding $\dot{W}^{1,2}(\tor) \emb \mathcal{BMO}(\tor)$ (see \eqref{bmo2}), we deduce that 
 \begin{align}
 \label{upG2}
\|G\|_{L^{\infty}}&\leq C\|\nabla G\|_{L^{\frac{q_1(2+\alpha)}{q_1+\alpha+2}}}^{\frac{2(2+\alpha)}{q_1\alpha}}\|G\|_{L^{q_1}}^{1-\frac{2(2+\alpha)}{q_1\alpha}} \nonumber \\
&\leq C\|\nabla u\|_{L^{2+\alpha}}^{\frac{2(2+\alpha)}{q_1\alpha}}\|\rho u\|_{L^{q_1}}^{\frac{2(2+\alpha)}{q_1\alpha}}\|u\|_{{\mathcal{BMO}}}^{1-\frac{2(2+\alpha)}{q_1\alpha}}\|\rho u\|_{L^{q_1}}^{1-\frac{2(2+\alpha)}{q_1\alpha}}\nonumber \\
&\leq C\|\nabla u\|_{L^{2+\alpha}}^{\frac{2(2+\alpha)}{q_1\alpha}}\|\rho u\|_{L^{q_1}}^{\frac{2(2+\alpha)}{q_1\alpha}}\|\nabla u\|_{L^2}^{1-\frac{2(2+\alpha)}{q_1\alpha}}\|\rho u\|_{L^{q_1}}^{1-\frac{2(2+\alpha)}{q_1\alpha}}\nonumber \\
&= C\|\nabla u\|_{L^{2+\alpha}}^{\frac{2(2+\alpha)}{q_1\alpha}}\|\nabla u\|_{L^2}^{1-\frac{2(2+\alpha)}{q_1\alpha}}\|\rho u\|_{L^{q_1}}
\end{align}
with $C=C(q_1,\alpha)$.

On the right-most side of Equation~\eqref{upG2} we estimate $\|\nabla u\|_{L^{2+\alpha }}$ via \eqref{Lp2}, thus obtaining
\begin{equation}\label{Nov24, x}
\begin{aligned}
\|\nabla u\|_{L^{2+\alpha }}&\leq C \tilde{\rho} ^{\frac{\beta\varepsilon}{2}}\big[1+D\big] + C\tilde{\rho} ^{\frac{\beta\varepsilon}{2}+\frac{1}{2}-\frac{1}{2+\alpha}}\big[1+D\big]\left(\frac{X^2}{10+Y^2}\right)^{\frac{1}{2}-\frac{1}{2+\alpha}} + C,
\end{aligned}
\end{equation}
where $C=C(T, \alpha, \e , E_0, M, \gamma, \mu, \beta)$. To bound the norm of $\rho u$ let us further specify $q_1$: fix a parameter $q \in \left](2\nu)^{-1},\infty\right[$ with $\nu$ as in Lemma~\ref{lem:rhonu}, and set $q_1=\frac{4q+1}{2q}$. Then, from H\"{o}lder's inequality and Lemmas~\ref{lem:rhoLp} $\&$ \ref{lem:rhonu}, one infers that
\begin{align}\label{Nov24, y}
        \|\rho u\|_{L^{\frac{4q+1}{2q}}}&=\left\{ \int_{\tor} \rho^{\frac{1+\nu}{2+\nu}\cdot \frac{4q+1}{2q}} \left(\rho^{\frac{1}{2+\nu}}|u| \right)^{\frac{4q+1}{2q}}\,\dd x\right\}^{\frac{2q}{4q+1}}\nonumber\\
        &\leq \|\rho\|_{L^{\frac{(1+\nu)(4q+1)}{2q\nu -1}}}^{\frac{1+\nu}{2+\nu}}\left(\int_{\tor}\rho |u|^{2+\nu}\,\mathrm{d}x\right)^{\frac{1}{2+\nu}}\nonumber\\
        &\leq C\Big(T, \alpha, \e, q, E_0, M, \gamma, \mu, \beta\Big) \qquad\text{for a.e. } t \in [0,T].
     \end{align}

Substituting Equations~\eqref{Nov24, x} and \eqref{Nov24, y} into Equation~\eqref{upG2}, and making use of the div-curl estimate in Lemma~\ref{lemma1} and the definition of $D$ in \eqref{D, def}, we deduce that 
\begin{align}   \label{f1}
    \|G\|_{L^{\infty}}&\leq C\left\{\tilde{\rho} ^{\frac{\beta\varepsilon}{2}}\big[1+D\big]
    + \tilde{\rho} ^{\frac{\beta\varepsilon}{2}+\frac{1}{2}-\frac{1}{2+\alpha}}\big[1+D\big]\left(\frac{X^2}{10+Y^2}\right)^{\frac{1}{2}-\frac{1}{2+\alpha}} + 1\right\}^{\frac{2(2+\alpha)}{q_1\alpha}} D^{1-\frac{2(2+\alpha)}{q_1\alpha}}\nonumber\\
    &\leq C\left\{\tilde{\rho}^{\varepsilon}
    \left(D^{1-\frac{2(2+\alpha)}{q_1\alpha}}+D\right)+\tilde{\rho}^{\varepsilon+\frac{1}{q_1}}\left(D^{1-\frac{2(2+\alpha)}{q_1\alpha}}+D\right)\left( \frac{X^2}{10+Y^2}\right)^{\frac{1}{q_1}}\right\},
\end{align} 
with $C=C(T, \alpha, \e, q, E_0, M, \gamma, \mu, \beta).$ Recall here that $q_1:=\frac{4q+1}{2q}$.

\smallskip
\noindent
{\bf Step~3.} We now prove a time-dependent upper bound for $\hatrho^\beta$; compare with \S\ref{sec: rho bound}, Equation~\eqref{rho1}. 
\begin{align}\label{frho}\hatrho^{\beta}(t)
          \leq &\, C +\frac{\gamma}{2\mu\beta}\int_{0}^{t}e^{-\frac{\gamma}{2\mu}(t-\tau)}\hatrho^{\beta}(\tau)\,\mathrm{d}\tau+C\int_{0}^{t}e^{-\frac{\gamma}{2\mu}(t-\tau)}D(\tau)\,\mathrm{d}\tau\nonumber\\
          &+\int_{0}^{t} e^{-\frac{\gamma}{2\mu}(t-\tau)}\Bigg[\tilde{\rho}^{\varepsilon}
    \left(D^{1-\frac{2(2+\alpha)}{q_1\alpha}}(\tau)+D(\tau)\right)\nonumber\\
          &\qquad+\tilde{\rho}^{\varepsilon+\frac{1}{q_1}}\left(D^{1-\frac{2(2+\alpha)}{q_1\alpha}}(\tau)+D(\tau)\right)\left( \frac{X^2(\tau)}{10+Y^2(\tau)}\right)^{\frac{1}{q_1}}\Bigg]\,\mathrm{d}\tau\qquad\text{ for a.e. } t \in [0,T],
\end{align}
with some constant $C=C(T, \alpha, \e, q, E_0, M, \gamma, \mu, \beta)$.

This shall be established via an analogous approach to the derivation of Equation~\eqref{rho1}, by utilising the estimates~ \eqref{f1}--\eqref{f3}. For the sake of brevity, we only point out the differences.

\begin{itemize}
    \item 
In place of Equation~\eqref{xxx, Nov24}, we bound for \emph{a.e.} $t \in [0,T]$ that
\begin{align}\label{f2}
\left\|(-\Delta)^{-1}\mathrm{div}(\rho u)\right\|_{L^{\infty}}&\leq C\|(-\Delta)^{-1}\nabla\mathrm{div}(\rho u)\|_{L^{\frac{4q+1}{2q}}} \nonumber\\
& \leq C\|\rho u\|_{L^{\frac{4q+1}{2q}}}\nonumber\\
&\leq C\Big(T, \alpha, \e, q, E_0, M, \gamma, \mu, \beta\Big). 
\end{align}
Here we use the fact that  the average of $\rho u$ is zero, the Sobolev inequality, continuity of the Riesz operator, and the estimate~\eqref{Nov24, y}.

\item 
In place of Equation~\eqref{pb}, we invoke Lemma~\ref{lem:rhoLp} to deduce for \emph{a.e.} $t \in [0,T]$ that
\begin{equation}
   \label{f3}
\left|\bar{P}-\bar{B}\right|\leq  C\left[1+D(t)\right],
\end{equation}
with $C$ having the same dependence as the constant in \eqref{f2} above.
\end{itemize}

\smallskip
\noindent
{\bf Step~4.} We are now ready to conclude the essential upper bound for $\rho$ up to any finite time $T$. Indeed, we may infer from Equations~\eqref{frho} and \eqref{logY2}, H\"{o}lder inequality, and the energy inequality in Lemma~\ref{lemma: energy ineq} that 
\begin{align*}
\hatrho^{\beta}(t) &\leq C+\frac{\gamma}{2\mu\beta}\int_{0}^{t}e^{-\frac{\gamma}{2\mu}(t-\tau)}\hatrho^{\beta}(\tau)\,\mathrm{d}\tau+C\left\{\int_{0}^{t}e^{-\frac{\gamma}{2\mu}(t-\tau)}D^2(\tau)\,\mathrm{d}\tau\right\}^{\frac{1}{2}}\\
&\quad+C\hatrho^{\varepsilon}(t)\left\{\int_{0}^{t} e^{-\frac{\gamma}{2\mu}(t-\tau)}D^{2}(\tau)\,\mathrm{d}\tau\right\}^{\frac{q_1\alpha-4-2\alpha}{2q_1\alpha}}+C\hatrho^{\varepsilon}(t)\left\{\int_{0}^{t} e^{-\frac{\gamma}{2\mu}(t-\tau)}D^{2}(\tau)\,\mathrm{d}\tau\right\}^{\frac{1}{2}}\\
&\quad+C\hatrho^{\varepsilon+\frac{1}{q_1}}(t)\left\{\int_{0}^{t} e^{-\frac{\gamma}{2\mu}(t-\tau)}D^{2}(\tau)\,\mathrm{d}\tau\right\}^{\frac{q_1\alpha-4-2\alpha}{2q_1\alpha}}\left\{\int_{0}^{t} e^{-\frac{\gamma}{2\mu}(t-\tau)}\frac{X^2(\tau)}{10+Y^2(\tau)}\,\mathrm{d}\tau\right\}^{\frac{1}{q_1}}\\
&\quad+C\hatrho^{\varepsilon+\frac{1}{q_1}}(t)\left\{\int_{0}^{t} e^{-\frac{\gamma}{2\mu}(t-\tau)}D^{2}(\tau)\,\mathrm{d}\tau\right\}^{\frac{1}{2}} \left\{\int_{0}^{t} e^{-\frac{\gamma}{2\mu}(t-\tau)}\frac{X^2(\tau)}{10+Y^2(\tau)}\,\mathrm{d}\tau\right\}^{\frac{1}{q_1}}\\
&\leq C+\frac{\gamma}{2\mu\beta}\int_{0}^{t}e^{-\frac{\gamma}{2\mu}(t-\tau)}\hatrho^{\,\beta}(\tau)\,\mathrm{d}\tau +C\hatrho^{\,\,\varepsilon+\frac{2}{q_1}}(t)\qquad \text{for a.e. } t \in [0,T].
\end{align*}
Here, as before, the constant $C=C(T, \alpha, \e, q, E_0, M, \gamma, \mu, \beta)$. 

Recall $q_1 = \frac{4q+1}{2q}$; hence, $\varepsilon +\frac{2}{q_1} =\varepsilon +\frac{4q}{4q+1}$. If this parameter can be chosen to be less than $\beta$, then by Gr\"{o}nwall's lemma we arrive at $\hatrho^{\,\beta}(t)\leq C$. Indeed, $q \in \left](2\nu)^{-1},\infty\right[$ with $\nu$ as in Lemma~\ref{lem:rhonu}, and one may always choose $\e=\e(q)$ such that $\e + \frac{2}{q_1} \leq 1$, where $\beta>1$ by assumption. Also note that we can always select $\alpha>\frac{4}{q_1-2}$ By fixing $q$, $\alpha$, and $\e$ as such once and for all, we obtain a constant $\bar{M}(T)<\infty$ depending only on $T$, $E_0$, $M$, $\gamma$, $\mu$, and $\beta$, such that 
\begin{equation}
    \label{uprho2}
    \rho(t,x)\leq \bar{M}(T),\qquad \text{ for a.e. } (t,x)\in [0,T]\times\mathbb{T}^2.
\end{equation}
This concludes the proof of the upper bound for density.

\smallskip
\noindent
{\bf Step~5.} Finally, let us prove the essential lower bound for $\rho$ up to any finite time $T$ away from zero, via an adaptation of the arguments in \S\ref{lower bound}.

Recall the definition of $\theta$ in \eqref{theta, def} and the bound~\eqref{integrated diff ineq for theta} it satisfies:
\begin{equation*}
\begin{cases}
\theta(\rho):= 2\mu \log\rho +\beta^{-1}\rho^{\beta},\\
\theta\big(\rho(t,x)\big)\geq\theta\big(\rho_0(x)\big) -\left\|(-\Delta)^{-1}\mathrm{div}(\rho u)\right\|_{L^{\infty}}\nonumber +\int_{0}^{t} \left\{\bar{P}-\big|G+P+\bar{B}\big|\right\}(\tau, x)\,\mathrm{d}\tau.
\end{cases}
\end{equation*}
The second term on the right-hand side of the inequality below is bounded via \eqref{f2}.

For the integral term $\int_{0}^{t} \left\{\bar{P}-\big|G+P+\bar{B}\big|\right\}(\tau, x)\,\mathrm{d}\tau$, in view of Equations~\eqref{f1} and \eqref{uprho2} we have that
\begin{align*}
\sup_{x\in \mathbb{T}^2}\left|G(t,x)\right| \leq C\left[
    \left(D^{1-\frac{2(2+\alpha)}{q_1\alpha}}+D\right)+\left(D^{1-\frac{2(2+\alpha)}{q_1\alpha}}+D\right)\left( \frac{X^2}{10+Y^2}\right)^{\frac{1}{q_1}}\right].
\end{align*}
Again, $q_1 = \frac{4q+1}{2q}$ and $q \in \left](2\nu)^{-1},\infty\right[$ with $\nu$ as in Lemma~\ref{lem:rhonu}. We shall fix $q$ once and for all. On the other hand, by Equation~\eqref{uprho2} and Cauchy--Schwarz,
      $$\begin{aligned}
        \sup_{x\in \mathbb{T}^2}|P(t,x)|+\left|\bar{B}\right|&\leq C+\left\{\int_{\mathbb{T}^2} (\lambda+2\mu )\,\mathrm{d}x\right\}^{\frac{1}{2}}\left\{\int_{\mathbb{T}^2} (\lambda+2\mu)|\mathrm{div}u|^2 \,\mathrm{d}x\right\}^{\frac{1}{2}}\\
        &\leq C\big(1+D(t)\big).
      \end{aligned}$$
We may thus use Young's inequality to obtain  that
    \begin{equation*}
\sup_{x\in\mathbb{T}^2}\left|G(t,x)+P(t,x)+\bar{B}\right|\leq C\left\{1+ D^2(t)+\left(\frac{X^2}{10+Y^2}\right)(t)\right\}\qquad\text{for any $\delta>0$}.
    \end{equation*}
Then, from the energy inequality in Lemma~\ref{lemma: energy ineq} and Equation~\eqref{log Y bound}, one infers that
\begin{align}\label{Nov24, lower bd}
 \int_{0}^{t} \left\{\bar{P}-\left|G+P+\bar{B}\right|\right\}\,\mathrm{d}\tau 
&\geq -\int_{0}^{t} \left\{\sup_{x\in\mathbb{T}^2}\left|G+P+\bar{B}\right|\right\}\,\mathrm{d}\tau\nonumber\\     
&\geq -C\int_{0}^{t}\left\{D^2(\tau)+\left(\frac{X^2(\tau)}{10+Y^2(\tau)}\right)\right\} \,\mathrm{d}\tau\nonumber\\
&\geq -C\qquad\text{for a.e. } t \in [0,T].
\end{align}
Here, in view of the dependence on parameters of $\bar{M}(T)$ in Equation~\eqref{uprho2}, we note that the constant $C$ here depends only on $T$, $E_0$, $M$, $\gamma$, $\mu$, and $\beta$.

We may now conclude the proof of the lower bound for $\rho$ as in \S\ref{lower bound}. Indeed, by substituting Equations~\eqref{f2} and \eqref{Nov24, lower bd} into \eqref{integrated diff ineq for theta}, one obtains that
    \begin{equation*}
\theta\big(\rho(t,x)\big)\geq \theta \big(\rho_0(x)\big)-C\Big(T, \nu, \alpha, E_0, M, \gamma, \mu, \beta\Big)\qquad\text{for \emph{a.e.}} (t,x) \in [0,T]\times\tor.
    \end{equation*}
As argued in Perepelitsa \cite{perep}, when $\rho$ becomes close to zero (depending additionally on the positive constant $\bar{m}$, the essential lower bound for initial density $\rho_0$) at time $t$, $\theta(\rho) \approx \log \rho$  above inequality can relax to $\log \rho$, thus prohibiting the formation of vacuum. Therefore, for some constant $\bar{m}(T)>0$ it holds that
	\begin{equation}
        \label{lowrho2}
		\rho(t,x)\geq \bar{m}(T)\qquad\text{ for a.e. } (t,x)\in [0,T]\times \mathbb{T}^2,
\end{equation}
where $\bar{m}(T)=\bar{m}(T, E_0, M, m, \gamma, \mu, \beta)$.

\smallskip
The proof of Proposition~\ref{propn: density bound, beta>1} is now complete in view of Steps~1--5.    \end{proof}

\subsection{Global existence of weak solutions when $\beta>1$}\label{sec: final}

Finally, with Proposition~\ref{propn: density bound, beta>1} at hand, we are at the stage of deducing the global existence of weak solutions under the mere assumption that $\beta>1$ and $\gamma>1$. Our argument is motivated by Va\u{\i}gant--Kazhikhov  \cite[pp.1136--1139]{vaigant}, which in turn relies on the classical work of Yudovich \cite{yudovich}.

To begin with, we mollify the initial data $(\rho_0, u_0)$ to obtain a sequence $\left\{\left(\rho_0^n, u_0^n\right)\right\}$ such that 
\begin{equation*}
    \begin{cases}
0<m<\rho_0^n(x)<M<\infty\qquad\text{ for all } x\in\mathbb{T}^2,\\
(\rho_0^n,u_0^n)\in C^{1+\omega}\left(\mathbb{T}^2\right)\times C^{2+\omega}\left(\mathbb{T}^2;\R^2\right)\qquad\text{for some } 0<\omega<1,\\
(\rho_0^n,u_0^n) \longrightarrow (\rho_0, u_0)\qquad\text{ in $L^\infty \times H^1$ as $n \to \infty$.}
    \end{cases}
\end{equation*}
We shall fix $\omega$ once and for all in the sequel. By Theorem \ref{thm: VK} (Va\u{\i}gant--Kazhikhov  \cite{vaigant}), there exists for each $n \in \mathbb{N}$ a unique global classical solution $(\rho^n, u^n)$ for Equation~\eqref{equ}.

The arguments in the earlier parts of the paper give us the following:
 \begin{itemize}
    \item The energy inequality (see Lemma~\ref{lemma: energy ineq}).
\begin{equation*}
\begin{aligned}
&\sup\limits_{t>0} \int_{\mathbb{T}^2} \left\{\rho^n(t,x)\frac{|u^n(t,x)|^2}{2} +\frac{P(\rho^n(t,x))}{\gamma-1}\right\}\,\mathrm{d}x \\ &\qquad+\iint\limits_{]0,+\infty[\times \mathbb{T}^2} \bigg\{\big(\lambda(\rho^n)+2\mu\big)|\mathrm{div} \,u^n(t,x)|^2+\mu|\mathrm{rot}\, u^n(t,x)|^2\bigg\}\,\mathrm{d}x\,\mathrm{d}t\leq C.
\end{aligned}
\end{equation*}
\item The bounds for density (see Equations~\eqref{uprho2} and \eqref{lowrho2}): there exist $0<\bar{m}(T) <\bar
{M}(T)<\infty$ such that for any $T \in ]0,\infty[$, it holds that
\begin{equation*}
    \bar{m}(T)<\rho^n <\bar{M}(T)\qquad \text{for all } (t,x)\in [0,T]\times \mathbb{T}^2.
\end{equation*}
\item  In view of Equation~\eqref{uprho2}, the integral bound for $D^2(t) + Y^2(t)$, and the previous bound for $\rho^n$, we have that
\begin{equation*}
\int_{0}^{T}\left\{\left\|B^n\right\|_{H^1}^2+\left\|\mathrm{rot} \,u^n\right\|_{H^1}^2\right\} \,\mathrm{d}t\leq C.
 \end{equation*}
 \end{itemize} 
In the above, $\left(\rho^n, u^n\right)$ are the classical solution corresponding to the initial data $\left(\rho_0^n,u_0^n\right)$. The quantities $B^n$, $D^n$, and $Y^n$ are defined as in Equations~\eqref{defB}, \eqref{D, def}, and \eqref{Y, def}, respectively, with  $\left(\rho^n, u^n\right)$  in lieu of $(\rho, u)$. Moreover, the constants $C$, $\bar{m}(T)$, and $\bar{M}(T)$ depend only on $T$, $E_0$, $M$, $m$, $\gamma$, $\mu$, and $\beta$.

Thus, one may extract a subsequence from $\left\{\left(\rho^n,u^n\right)\right\}$ (without relabelling) such that
\begin{equation}
   \label{lim}
    \begin{aligned}
        \rho^n \weak \rho \qquad &\text{weakly-$\star$ in } \quad L^{\infty}\big([0,T]\times \mathbb{T}^2\big);\\
  u^n \weak u \qquad &\text{weakly in} \quad L^{2}\left(0,T; H^1(\mathbb{T}^2; \R^2) \right);\\
  B^n \weak B \qquad &\text{weakly in} \quad L^{2}\left(0,T; H^1(\mathbb{T}^2)\right);\\
 \mathrm{rot}\, u^n \weak \mathrm{rot}\, u\qquad &\text{weakly in} \quad L^{2}\left(0,T; H^1(\mathbb{T}^2)\right).
    \end{aligned}
\end{equation}
Let $\Phi: \left[0, \bar{M}(T)+1\right] \to \R$ be an arbitrary smooth function. Thanks to the first line in \eqref{lim} above, there exists $\limphi \in \linf\big([0,T]\times\tor\big)$ such that modulo subsequences (unrelabelled),
\begin{equation*}
    \Phi\left(\rho^n\right) \weak \limphi \qquad\text{weakly-$\star$ in } \quad L^{\infty}\big([0,T]\times \mathbb{T}^2\big).
\end{equation*}

Denote 
\begin{equation*}
\begin{cases}
\xi(s) := \frac{1}{\lambda(s)+2\mu} \qquad\text{ for } s \in \left[0, \bar{M}(T)+1\right],\\
P_1(\rho) := P(\rho) - \bar{P}(\rho).
\end{cases}
\end{equation*}
It then holds that 
\begin{align*}
    {\rm div}\, u^n = \xi\left(\rho^n\right) B^n + \xi\left(\rho^n\right) P_1\left(\rho^n\right).
\end{align*}
In view of \eqref{lim} and the usual Sobolev embedding, we see that $B^n \to B$ strongly in $L^2\big(0,T; L^q(\tor)\big)$ for any $q \in [1,\infty[$. Hence, 
\begin{align*}
 \mathrm{div} u=\bra\xi\ket B+\bra{\xi P_1}\ket.
\end{align*}

To show that $(\rho, u)$ is indeed a weak solution in $[0,T]\times\tor$, as argued at the beginning of Va\u{\i}gant--Kazhikhov  \cite[p.1137]{vaigant}, it suffices to establish that 
\begin{align*}
    \bra\xi\ket =\xi(\rho)\qquad\text{ and }\qquad \bra\xi P_1\ket = \xi(\rho)P_1(\rho).
\end{align*}
As in Lions \cite{lions'} and Hoff \cite{hoff2} (see also \cite[p.1137]{vaigant}), this is warranted by showing that $\limphi = \Phi(\rho)$ for any fixed strictly convex function $\Phi$. We shall take $\Phi(s)=s^2$ in the sequel. That is, it remains to show that
\begin{align}\label{goal, rhos}
    \Psi := \rhos - \rho^2 = 0.
\end{align}
Thanks to the convexity of $\Phi(s)=s^2$, we already have that
\begin{equation}\label{Psi geq 0}
    \Psi \geq 0.
\end{equation}

As per Va\u{\i}gant--Kazhikhov  \cite[Equations~(99), (100)]{vaigant} and Huang--Li \cite[Equation~(5.14) and the preceding paragraph]{H1}, by both viewing $\rho$ as a renormalised solution and by directly passing to the limits for the classical solution $(\rho^n, u^n)$, respectively, one obtains
\begin{equation*}
    \begin{cases}
        \frac{\partial \rho^2}{\partial t}+\mathrm{div}\left(\rho^2 u\right)+ B\rho^2 
        \bra\xi\ket+\rho^2\bra{\xi P_1}\ket=0,\\
        \\
        \frac{\partial \bra{\rho^2}\ket}{\partial t}+\mathrm{div}\left(\bra{\rho^2}\ket u\right)+ B\bra{\rho^2\xi}\ket+\bra{\rho^2\xi P_1}\ket=0,
    \end{cases}
\end{equation*}
both in $\mathcal{D}^{\prime}\big(]0,T[\times \mathbb{T}^2\big)$. 
It follows that
\begin{equation}
  \label{Psi}
    \frac{\partial \Psi}{\partial t}+\mathrm{div}(\Psi u)+ B\left(\bra\rho^2\xi\ket - \rho^2 \bra\xi\ket\right)+ \bra \rho^2\xi P_1\ket - \rho^2 \bra\xi P_1\ket=0\quad \text{ in }\mathcal{D}^{\prime}\big(]0,T[\times \mathbb{T}^2\big),
\end{equation}
equipped with the initial condition $\Psi\big|_{t=0}=0$ \emph{a.e.} in $\tor$.

To proceed, we \emph{claim} that for some constant $C=C(T, E_0, M, m, \gamma, \mu, \beta)$, it holds that
\begin{equation}\label{claim, Nov24}
\int_{\mathbb{T}^2}\Psi \,\mathrm{d}x \leq C\int_0^{t}\int_{\mathbb{T}^2}\big(|B|+1\big)\Psi\,\mathrm{d}x\mathrm{d}\tau\qquad \text{for a.e. } t \in [0,T].
\end{equation}
Indeed, noticing that 
\begin{equation*}
 0< \frac{1}{\left(\bar{M}+1\right)^\beta + 2\mu} \leq \xi(\rho^n) \leq \frac{1}{2\mu} <\infty
\end{equation*}
and using the definition of $\bra\bullet\ket$ and $\Psi$, we obtain for some $C=C(\bar{M}, T, \mu, \beta)$ that
\begin{align*}
    0 &\leq \left|\int_\tor \varphi \left\{\bra\rho^2\xi\ket - \rho^2 \bra\xi\ket\right\}\,\dd x\right| \nonumber\\
    &\qquad\qquad + \left|\int_\tor \varphi \left\{\bra \rho^2\xi P_1\ket - \rho^2 \bra\xi P_1\ket\right\}\,\dd x \right|\leq C \int_\tor \left(|\varphi|+1\right)\Psi\,\dd x\qquad\text{for each $\varphi \in L^1\left(\tor\right)$}.
\end{align*}
The \emph{claim}~\eqref{claim, Nov24} then follows from integrating both sides of Equation~\eqref{Psi} over $[0,t]\times \tor$.

We are now ready to conclude via a Gr\"{o}nwall argument similar in spirit to  Yudovich  \cite{yudovich}. Utilising the above \emph{claim}~\eqref{claim, Nov24} and H\"{o}lder's inequality, one deduces that
\begin{align}\label{Z1, Nov24}
\int_{\mathbb{T}^2}\Psi (t,x)\,\mathrm{d}x &\leq \int_0^{t}\big\| |B|+1 \big\|_{L^{\frac{2}{\varepsilon}}}\left\|\Psi^{1-\e}\right\|_{L^{\frac{\e}{1-\e}}}\|\Psi^\e\|_{L^{\frac{2}{\e}}}\,\mathrm{d}\tau\nonumber \\
&\leq \int_0^{t}\big\| |B|+1 \big\|_{L^{\frac{2}{\varepsilon}}}\|\Psi\|_{L^1}^{1-\varepsilon}\|\Psi\|_{L^{2}}^{\varepsilon}\,\mathrm{d}\tau \nonumber\\
&=: \mathcal{Z}(t)
\end{align}
for any $\e \in ]0,1[$. One thus obtains the differential inequality
\begin{equation*}
    \frac{\mathrm{d} \mathcal{Z}(t)}{\mathrm{d}t}\leq \big\||B|+1\big\|_{L^{\frac{2}{\varepsilon}}}\|\Psi\|_{L^{2}}^{\varepsilon}  \mathcal{Z}^{1-\varepsilon}(t),
\end{equation*}
which implies that
\begin{align}\label{Z, Nov24}
\mathcal{Z}^{\varepsilon}(t)&\leq \varepsilon \int_0^{T}\big\||B|+1\big\|_{L^{\frac{2}{\varepsilon}}}\|\Psi\|_{L^{2}}^{\varepsilon}\,\mathrm{d}t\nonumber\\
        &\leq \varepsilon\left(\int_0^{T}\big\| |B|+1\big\|_{L^{\frac{2}{\varepsilon}}}^2\,\mathrm{d}t\right)^{\frac{1}{2}}\left(\int_0^{T}\|\Psi\|_{L^2}^{2\varepsilon}\,\mathrm{d}t\right)^{\frac{1}{2}}
\end{align}
by integrating $\dd \mathcal{Z} \slash \mathcal{Z}^{1-\e} = \e^{-1}\dd\left(\mathcal{Z}^\e\right)$ over time and applying the Cauchy--Schwarz inequality.

To control the right-hand side of \eqref{Z, Nov24}, note that by the time-dependent upper bound~\eqref{uprho2} for $\rho$, we have $\|\Psi\|_{L^2} \leq C(T, E_0, M,m, \gamma, \mu, \beta)$. On the other hand, by virtue of Lemma~\ref{1} and the ensuing Remark~\ref{remark: const} (observe the dependence of the constant on $\e$), we have that 
\begin{align*}
    \big\| |B|+1\big\|_{L^{\frac{2}{\e}}} \leq C\sqrt{{2}\slash{\e}} \, \big\| |B|+1\big\|_{L^2}^\e \|B\|^{1-\e}_{H^1}. 
\end{align*}
Furthermore, 
\begin{align*}
    \big\| |B|+1\big\|_{L^2} \quad\text{and}\quad  \| B\|_{H^1} \leq C\Big(T, E_0, M,m, \gamma, \mu, \beta\Big).
\end{align*}
To see this, we first invoke the pointwise-in-time estimate~\eqref{BH1} for $\|B\|_{H^1}$ in terms of the upper bound for ${\rho}$, as well as $D(t)$ and $Y(t)$. Here we have an upper bound for $\rho$ (depending on $T$; see Equation~\eqref{uprho2}), and hence $\int_0^T \left\{D^2(t)+Y^2(t)\right\}\,\dd t \leq C <\infty$ for some constant $C$ depending on $T$ as well, via an adaptation of Equations~\eqref{D2, Nov24} and \eqref{log Y bound}.

The preceding argument allows us to infer from Equation~\eqref{Z, Nov24} the following bound:
\begin{equation}\label{final, Nov24}
\mathcal{Z}^{\varepsilon}(t) \leq C'\sqrt{\e} \cdot C^\e\qquad\text{for both $C$ and $C'$ depending only on $T, E_0, M,m, \gamma, \mu, \beta$.}
\end{equation}
As $\lim_{\e \searrow 0} \sqrt{\e}^{1/\e} = 0$, we conclude that $\mathcal{Z}(t)=0$. But by Equation~\eqref{Z1, Nov24} we have $\int_\tor \Psi(t,x)\,\dd x \leq \mathcal{Z}(t)$ and by Equation~\eqref{Psi geq 0} we have $\Psi \geq 0$. This proves the desired identity~\eqref{goal, rhos}, and hence completes the proof of the Main Theorem~\ref{thm: main}.

\medskip
\noindent
{\bf Acknowledgement}. 
SL is indebted to Prof.~Tao Wang for  his kind communications and for pointing out some mistakes in an earlier version of the manuscript.

The research of SL is supported by NSFC Projects 12201399, 12331008, and 12411530065, Young Elite Scientists Sponsorship Program by CAST 2023QNRC001, National Key Research $\&$ Development Programs 2023YFA1010900 and 2024YFA1014900, the Shanghai Rising-Star Program, and the Shanghai Frontier Research Institute for Modern Analysis. The research of JY is partially supported by National Key Research $\&$ Development Programs 2023YFA1010900 and 2024YFA1014900.

\end{document}